\newcommand{\agd}{abstract graph diagram}
\newcommand{\cc}{c.c.}
\newcommand{\kref}[1]{(\ref{#1})}
\newcommand{\RR}[1]{R\left(#1\right)}
\newtheorem{theorem}{Theorem}[section]
\newtheorem{proposition}[theorem]{Proposition}
\newtheorem{definition}[theorem]{Definition}
\newtheorem{example}[theorem]{Example}
\newtheorem{remark}[theorem]{Remark}
\newenvironment{proof}{{\bf Proof.}}{\rightline{$\Box$}}
\newenvironment{proofX}{{\bf Proof.}}{}
\newenvironment{proofofmaintheorem}{{\noindent\bf Proof of Theorem.}}{}
\begin{document}
\newcommand{\keywords}[1]{{\it\small Keywords: }{\small #1}}
\newcommand{\ccode}[1]{{\it AMS classification: }{\small #1}}
\newcommand{\tgd}{twisted graph diagram}
\newcommand{\R}{\mbox{$\mathbb{R}$}}
\newcommand{\orient}[1]{\mbox{orien\-tation}\, #1}
\newcommand{\signt}[1]{\mbox{sign}\left(#1\right)}

\newcommand{\doubleLoop}{
\hspace*{1pt}\begin{minipage}{10pt}
\begin{picture}(10,10)
\put(3,5){\circle{10}}
\put(7,5){\circle{10}}
\put(5,1){\circle*{3}}
\end{picture}
\end{minipage}\hspace*{1.5pt}
}

\newcommand{\ThetaKlein}{
\hspace*{1pt}\begin{minipage}{10pt}
\begin{picture}(10,10)
\put(5,5){\circle{10}}
\put(0,5){\line(1,0){10}}
\put(0,5){\circle*{3}}
\put(10,5){\circle*{3}}
\end{picture}
\end{minipage}\hspace*{1.5pt}
}

\newcommand{\ThetaOhne}{
\hspace*{1pt}\begin{minipage}{10pt}
\begin{picture}(10,10)
\put(5,5){\circle{10}}
\put(0,5){\circle*{3}}
\put(10,5){\circle*{3}}
\end{picture}
\end{minipage}\hspace*{1.5pt}
}

\newcommand{\twoLoop}{
\hspace*{5pt}\begin{minipage}{15pt}
\begin{picture}(20,10)
\put(0,5){\circle{10}}
\put(10,5){\circle{10}}
\put(5,5){\circle*{3}}
\end{picture}
\end{minipage}\hspace*{1.5pt}
}

\newcommand{\ThetaV}{
\hspace*{1pt}\begin{minipage}{10pt}
\begin{picture}(10,10)
\put(5,5){\circle{10}}
\put(0,5){\circle*{3}}
\put(10,5){\circle*{3}}
\qbezier(0,5)(-5,15)(10,5)
\end{picture}
\end{minipage}\hspace*{1.5pt}
}

\newcommand{\eight}{
\hspace*{1pt}\begin{minipage}{10pt}
\begin{picture}(10,10)
\put(0,5){\circle*{3}}
\put(10,5){\circle*{3}}
\qbezier(0,5)(5,15)(10,5)
\qbezier(0,5)(-5,15)(10,5)
\end{picture}
\end{minipage}\hspace*{1.5pt}
}

\newcommand{\twovertex}{
\hspace*{1pt}\begin{minipage}{10pt}
\begin{picture}(10,10)
\put(1,5){\circle*{3}}
\put(9,5){\circle*{3}}
\end{picture}
\end{minipage}\hspace*{1.5pt}
}

\newcommand{\linie}{
\hspace*{1pt}\begin{minipage}{10pt}
\begin{picture}(10,10)
\put(1,5){\circle*{3}}
\put(9,5){\circle*{3}}
\put(1,5){\line(1,0){8}}
\end{picture}
\end{minipage}\hspace*{1.5pt}
}

\newcommand{\Tiv}{\hspace*{1pt}
\begin{minipage}{30pt}
\begin{picture}(30,10)
\qbezier(15,5)(22.5,15)(30,0)
\qbezier(15,5)(7.5,-5)(0,10)
\qbezier(16.5,3.5)(22.5,-5)(30,10)
\qbezier(13.5,6.5)(7.5,15)(0,0)
\qbezier(0,3)(0,3)(2,1)
\qbezier(30,3)(30,3)(28,1)
\qbezier(0,7)(0,7)(2,9)
\qbezier(30,7)(30,7)(28,9)
\end{picture}
\end{minipage}
\hspace*{1.5pt}}
\newcommand{\TivVertex}{\hspace*{1pt}
\begin{minipage}{30pt}
\begin{picture}(30,10)
\qbezier(15,5)(22.5,15)(30,0)
\qbezier(15,5)(7.5,-5)(0,10)
\qbezier(15,5)(22.5,-5)(30,10)
\qbezier(15,5)(7.5,15)(0,0)
\qbezier(0,3)(0,3)(2,1)
\qbezier(30,3)(30,3)(28,1)
\qbezier(0,7)(0,7)(2,9)
\qbezier(30,7)(30,7)(28,9)
\put(15,5){\circle*{3}}
\end{picture}
\end{minipage}
\hspace*{1.5pt}}
\newcommand{\TivInf}{\hspace*{1pt}
\begin{minipage}{20pt}
\begin{picture}(20,10)
\qbezier(11,5)(12.5,15)(20,0)
\qbezier(9,5)(7.5,-5)(0,10)
\qbezier(11,5)(12.5,-5)(20,10)
\qbezier(9,5)(7.5,15)(0,0)
\qbezier(0,3)(0,3)(2,1)
\qbezier(20,3)(20,3)(18,1)
\qbezier(0,7)(0,7)(2,9)
\qbezier(20,7)(20,7)(18,9)
\end{picture}
\end{minipage}
\hspace*{1.5pt}}
\newcommand{\TivNil}{\hspace*{1pt}
\begin{minipage}{20pt}
\begin{picture}(20,10)
\qbezier(0,0)(10,18)(20,0)
\qbezier(0,10)(10,-8)(20,10)
\qbezier(0,3)(0,3)(2,1)
\qbezier(20,3)(20,3)(18,1)
\qbezier(0,7)(0,7)(2,9)
\qbezier(20,7)(20,7)(18,9)
\end{picture}
\end{minipage}
\hspace*{1.5pt}}

\newcommand{\infcrossPa}{
\hspace*{1pt}
\begin{minipage}{10pt}
\begin{picture}(10,10)
\qbezier(0,0)(5,6)(10,0)
\qbezier(0,10)(5,4)(10,10)
\qbezier(-1,8)(-1,8)(11,8)
\end{picture}
\end{minipage}
\hspace*{1.5pt} 
}
\newcommand{\infcrossPb}{
\hspace*{1pt}
\begin{minipage}{10pt}
\begin{picture}(10,10)
\qbezier(0,0)(5,6)(10,0)
\qbezier(0,10)(5,4)(10,10)
\qbezier(-1,2)(-1,2)(11,2)
\end{picture}
\end{minipage}
\hspace*{1.5pt} 
}
\newcommand{\nilcrossPa}{
\hspace*{1pt}
\begin{minipage}{10pt}
\begin{picture}(10,10)
\qbezier(0,0)(6,5)(0,10)
\qbezier(10,0)(4,5)(10,10)
\qbezier(-1,8)(-1,8)(11,8)
\end{picture}
\end{minipage}
\hspace*{1.5pt}
}
\newcommand{\nilcrossPb}{
\hspace*{1pt}
\begin{minipage}{10pt}
\begin{picture}(10,10)
\qbezier(0,0)(6,5)(0,10)
\qbezier(10,0)(4,5)(10,10)
\qbezier(-1,2)(-1,2)(11,2)
\end{picture}
\end{minipage}
\hspace*{1.5pt}
}
\newcommand{\vertexPa}{\hspace*{1pt}
\begin{minipage}{10pt}
\begin{picture}(10,10)
\qbezier(0,0)(0,0)(10,10)
\qbezier(0,10)(0,10)(10,0)
\qbezier(-1,8)(-1,8)(11,8)
\put(5,5){\circle*{3}}
\end{picture}
\end{minipage}
\hspace*{1.5pt}}
\newcommand{\vertexPb}{\hspace*{1pt}
\begin{minipage}{10pt}
\begin{picture}(10,10)
\qbezier(0,0)(0,0)(10,10)
\qbezier(0,10)(0,10)(10,0)
\qbezier(-1,2)(-1,2)(11,2)
\put(5,5){\circle*{3}}
\end{picture}
\end{minipage}
\hspace*{1.5pt}}
\newcommand{\VvirtL}{\hspace*{1pt}
\begin{minipage}{10pt}
\begin{picture}(10,10)
\qbezier(0,0)(0,0)(10,10)
\qbezier(10,0)(10,0)(6.5,3.5)
\qbezier(0,10)(0,10)(3.5,6.5)
\qbezier(-1,8)(0,8)(11,8)
\end{picture}
\end{minipage}
\hspace*{1.5pt}}
\newcommand{\VvirtR}{\hspace*{1pt}
\begin{minipage}{10pt}
\begin{picture}(10,10)
\qbezier(0,0)(0,0)(10,10)
\qbezier(10,0)(10,0)(6.5,3.5)
\qbezier(0,10)(0,10)(3.5,6.5)
\qbezier(-1,2)(0,2)(11,2)
\end{picture}
\end{minipage}
\hspace*{1.5pt}}
\newcommand{\deleteU}{\hspace*{1pt}
\begin{minipage}{10pt}
\begin{picture}(10,6)
\qbezier(2,3)(2,3)(0,6)
\qbezier(2,3)(2,3)(0,0)
\qbezier(8,3)(8,3)(10,6)
\qbezier(8,3)(8,3)(10,0)
\put(2,3){\circle*{2}}
\put(8,3){\circle*{2}}
\end{picture}
\end{minipage}
\hspace*{1.5pt}}
\newcommand{\vertexU}{\hspace*{1pt}
\begin{minipage}{6pt}
\begin{picture}(6,6)
\qbezier(0,0)(0,0)(6,6)
\qbezier(0,6)(0,6)(6,0)
\put(3,3){\circle*{2}}
\end{picture}
\end{minipage}
\hspace*{1.5pt}}
\newcommand{\edgeU}{\hspace*{1pt}
\begin{minipage}{10pt}
\begin{picture}(10,6)
\qbezier(2,3)(2,3)(8,3)
\qbezier(2,3)(2,3)(0,6)
\qbezier(2,3)(2,3)(0,0)
\qbezier(8,3)(8,3)(10,6)
\qbezier(8,3)(8,3)(10,0)
\put(2,3){\circle*{2}}
\put(8,3){\circle*{2}}
\end{picture}
\end{minipage}
\hspace*{1.5pt}}
\newcommand{\edge}{\hspace*{1pt}
\begin{minipage}{16pt}
\begin{picture}(16,10)
\qbezier(3,5)(3,5)(13,5)
\qbezier(3,5)(3,5)(0,10)
\qbezier(3,5)(3,5)(0,0)
\qbezier(13,5)(13,5)(16,10)
\qbezier(13,5)(13,5)(16,0)
\put(3,5){\circle*{3}}
\put(13,5){\circle*{3}}
\end{picture}
\end{minipage}
\hspace*{1.5pt}}
\newcommand{\delete}{\hspace*{1pt}
\begin{minipage}{16pt}
\begin{picture}(16,10)
\qbezier(3,5)(3,5)(0,10)
\qbezier(3,5)(3,5)(0,0)
\qbezier(13,5)(13,5)(16,10)
\qbezier(13,5)(13,5)(16,0)
\put(3,5){\circle*{3}}
\put(13,5){\circle*{3}}
\end{picture}
\end{minipage}
\hspace*{1.5pt}}
\newcommand{\vertexO}{\hspace*{1pt}
\begin{minipage}{20pt}
\begin{picture}(20,20)
\qbezier(0,0)(0,0)(20,20)
\qbezier(0,20)(0,20)(20,0)
\put(10,10){\circle*{4}}
\end{picture}
\end{minipage}
\hspace*{1.5pt}}
\newcommand{\vertex}{\hspace*{1pt}
\begin{minipage}{10pt}
\begin{picture}(10,10)
\qbezier(0,0)(0,0)(10,10)
\qbezier(0,10)(0,10)(10,0)
\put(5,5){\circle*{3}}
\end{picture}
\end{minipage}
\hspace*{1.5pt}}
\newcommand{\poscross}{\hspace*{1pt}
\begin{minipage}{10pt}
\begin{picture}(10,10)
\qbezier(0,0)(0,0)(10,10)
\qbezier(10,0)(10,0)(6.5,3.5)
\qbezier(0,10)(0,10)(3.5,6.5)
\end{picture}
\end{minipage}
\hspace*{1.5pt}}
\newcommand{\poscrossO}{\hspace*{1pt}
\begin{minipage}{20pt}
\begin{picture}(20,20)
\qbezier(0,0)(0,0)(20,20)
\qbezier(20,0)(20,0)(13,7)
\qbezier(0,20)(0,20)(7,13)
\end{picture}
\end{minipage}
\hspace*{1.5pt}}
\newcommand{\poscrossU}{\hspace*{1pt}
\begin{minipage}{6pt}
\begin{picture}(6,6)
\qbezier(0,0)(0,0)(6,6)
\qbezier(6,0)(6,0)(4.5,1.5)
\qbezier(0,6)(0,6)(1.5,4.5)
\end{picture}
\end{minipage}
\hspace*{1.5pt}}
\newcommand{\negcross}{
\hspace*{1pt}
\begin{minipage}{10pt}
\begin{picture}(10,10)
\qbezier(0,10)(0,10)(10,0)
\qbezier(0,0)(0,0)(3.5,3.5)
\qbezier(6.5,6.5)(10,10)(10,10)
\end{picture}
\end{minipage}
\hspace*{1.5pt}
}
\newcommand{\negcrossO}{
\hspace*{1pt}
\begin{minipage}{20pt}
\begin{picture}(20,20)
\qbezier(0,20)(0,20)(20,0)
\qbezier(0,0)(0,0)(7,7)
\qbezier(13,13)(20,20)(20,20)
\end{picture}
\end{minipage}
\hspace*{1.5pt}
}
\newcommand{\negcrossU}{
\hspace*{1pt}
\begin{minipage}{6pt}
\begin{picture}(6,6)
\qbezier(0,6)(0,6)(6,0)
\qbezier(0,0)(0,0)(1.5,1.5)
\qbezier(4.5,4.5)(6,6)(6,6)
\end{picture}
\end{minipage}
\hspace*{1.5pt}
}
\newcommand{\nilcross}{
\hspace*{1pt}
\begin{minipage}{10pt}
\begin{picture}(10,10)
\qbezier(0,0)(6,5)(0,10)
\qbezier(10,0)(4,5)(10,10)
\end{picture}
\end{minipage}
\hspace*{1.5pt}
}
\newcommand{\nilcrossO}{
\hspace*{1pt}
\begin{minipage}{20pt}
\begin{picture}(20,20)
\qbezier(0,0)(12,10)(0,20)
\qbezier(20,0)(8,10)(20,20)
\end{picture}
\end{minipage}
\hspace*{1.5pt}
}
\newcommand{\infcross}{
\hspace*{1pt}
\begin{minipage}{10pt}
\begin{picture}(10,10)
\qbezier(0,0)(5,6)(10,0)
\qbezier(0,10)(5,4)(10,10)
\end{picture}
\end{minipage}
\hspace*{1.5pt} 
}
\newcommand{\infcrossO}{
\hspace*{1pt}
\begin{minipage}{20pt}
\begin{picture}(20,20)
\qbezier(0,0)(10,12)(20,0)
\qbezier(0,20)(10,8)(20,20)
\end{picture}
\end{minipage}
\hspace*{1.5pt} 
}
\newcommand{\infcrossU}{
\hspace*{1pt}
\begin{minipage}{6pt}
\begin{picture}(6,6)
\qbezier(0,0)(3,3)(6,0)
\qbezier(0,6)(3,3)(6,6)
\end{picture}
\end{minipage}
\hspace*{1.5pt} 
}
\newcommand{\nilcrossU}{
\hspace*{1pt}
\begin{minipage}{6pt}
\begin{picture}(6,6)
\qbezier(0,0)(3,3)(0,6)
\qbezier(6,0)(3,3)(6,6)
\end{picture}
\end{minipage}
\hspace*{1.5pt}
}

\newcommand{\loopU}{\hspace*{0.5pt}\begin{minipage}{8pt}
\begin{picture}(8,8)
\put(4,4){\circle{8}}
\put(4,0){\circle*{3}}
\end{picture}
\end{minipage}\hspace*{1pt}
}

\newcommand{\circbarU}{
\hspace*{0.5pt}\begin{minipage}{8pt}
\begin{picture}(8,8)
\put(4,4){\circle{8}}
\put(4,6){\line(0,0){4}}
\end{picture}
\end{minipage}\hspace*{1pt}
}
\newcommand{\circU}{
\hspace*{0.5pt}\begin{minipage}{8pt}
\begin{picture}(8,8)
\put(4,4){\circle{8}}
\end{picture}
\end{minipage}\hspace*{1pt}
}

\newcommand{\Loop}{
\hspace*{1pt}\begin{minipage}{10pt}
\begin{picture}(10,10)
\put(5,5){\circle{10}}
\put(5,0){\circle*{3}}
\end{picture}
\end{minipage}\hspace*{1.5pt}
}

\newcommand{\loopbar}{
\hspace*{1pt}\begin{minipage}{10pt}
\begin{picture}(10,10)
\put(5,5){\circle{10}}
\put(5,7.5){\line(0,0){5}}
\put(5,0){\circle*{3}}
\end{picture}
\end{minipage}\hspace*{1.5pt}
}

\newcommand{\circbar}{
\hspace*{1pt}\begin{minipage}{10pt}
\begin{picture}(10,10)
\put(5,5){\circle{10}}
\put(5,7.5){\line(0,0){5}}
\end{picture}
\end{minipage}\hspace*{1.5pt}
}

\newcommand{\Circ}{
\hspace*{1pt}\begin{minipage}{10pt}
\begin{picture}(10,10)
\put(5,5){\circle{10}}
\end{picture}
\end{minipage}\hspace*{1.5pt}
}
\newcommand{\loopbarU}{\hspace*{0.5pt}\begin{minipage}{8pt}
\begin{picture}(8,8)
\put(4,4){\circle{8}}
\put(4,0){\circle*{3}}
\put(4,6){\line(0,0){4}}
\end{picture}
\end{minipage}\hspace*{1pt}
}
\markboth{J. Uhing}{}
\title{{\bf\Large A Polynomial Invariant Of Twisted Graph Diagrams}}
\author{Jason Uhing}
\maketitle
\begin{abstract}
\noindent Twisted graph diagrams  are virtual graph diagrams with bars on edges. 
A bijection between abstract graph diagrams and {\tgd s} is constructed. Then a polynomial invariant of Yamada-type is 
developed which provides a lower bound for the virtual crossing number of virtual graph diagrams.
\end{abstract}
\keywords{Virtual spatial graph, disk/band surface}\\
\ccode{05C10, 57M27}
\section{Introduction} 
Let $G$ be a finite graph considered as a topological space. An embedding of $G$ into three-dimensional space is called
a {\it spatial graph}. A {\it regular projection} of $G$ onto a surface $S$ is a continious map $G\rightarrow S$ whose
multiple points are finitely many transverse double points away from the vertices of $G$. The image of $G$ under 
a regular projection together with over/under information given to the double points is called a 
{\it (regular) graph diagram}
on $S$. In \cite{flemmell01} regular graph diagrams are extended to virtual (regular) graph diagrams motivated by L.~Kauffman's 
theory of virtual links, see \cite{math.GT/0502014}.  A one-to-one correspondence between 
virtual links and so called 
abstract link diagrams is presented in \cite{kamakama01}. In the first part of this note the notion of an abstract link diagram 
is extended to an abstract graph diagram. Differently from \cite{kamakama01} we allow the disk/band surfaces to be non-orientable. This 
enables us to construct a bijection from {\agd s} to so called {\tgd s}. These diagrams are generalisations of virtual graph diagrams by adding 
bars to edges. Geometrically a bar corresponds to a twist of a band of the surface.  Concerning links this idea can be found in \cite{bour06}.

In chapter \ref{pure tgd} we interpret the polynomial of B.~Bollob\'as and O.~Riordan which is defined for possibly non-orientable disk/band 
surfaces, see \cite{bollrior02}, as a polynomial for pure twisted graph 
diagrams via their abstract graph diagrams. This leads to a polynomial invariant for twisted graph diagrams. The definition is similar to that of the 
Yamada polynomial in \cite{yama:89}. As an application  we obtain  a lower bound for the virtual crossing number of a virtual graph diagram.

\section{Abstract Graph Diagrams}
In this paper the underlying graph of a regular graph diagram may have several components. In addition, components 
without vertices, so called {\sf circle components}, are allowed. 
\begin{definition}
A pair $\left(S,D\right)$ is called an {\sf abstract graph diagram} if $S$ is an two-dimensional
disk/band surface, $D$ is a regular graph diagram on $S$ and (as a subset of $S$) 
a strong deformation retract of $S$. 
\end{definition}
The crossings and the vertices of an {\agd} are contained in the disks of the surface. Two examples 
for orientable surfaces are shown in figure 1.
\begin{center}
\scalebox{0.15}{\includegraphics{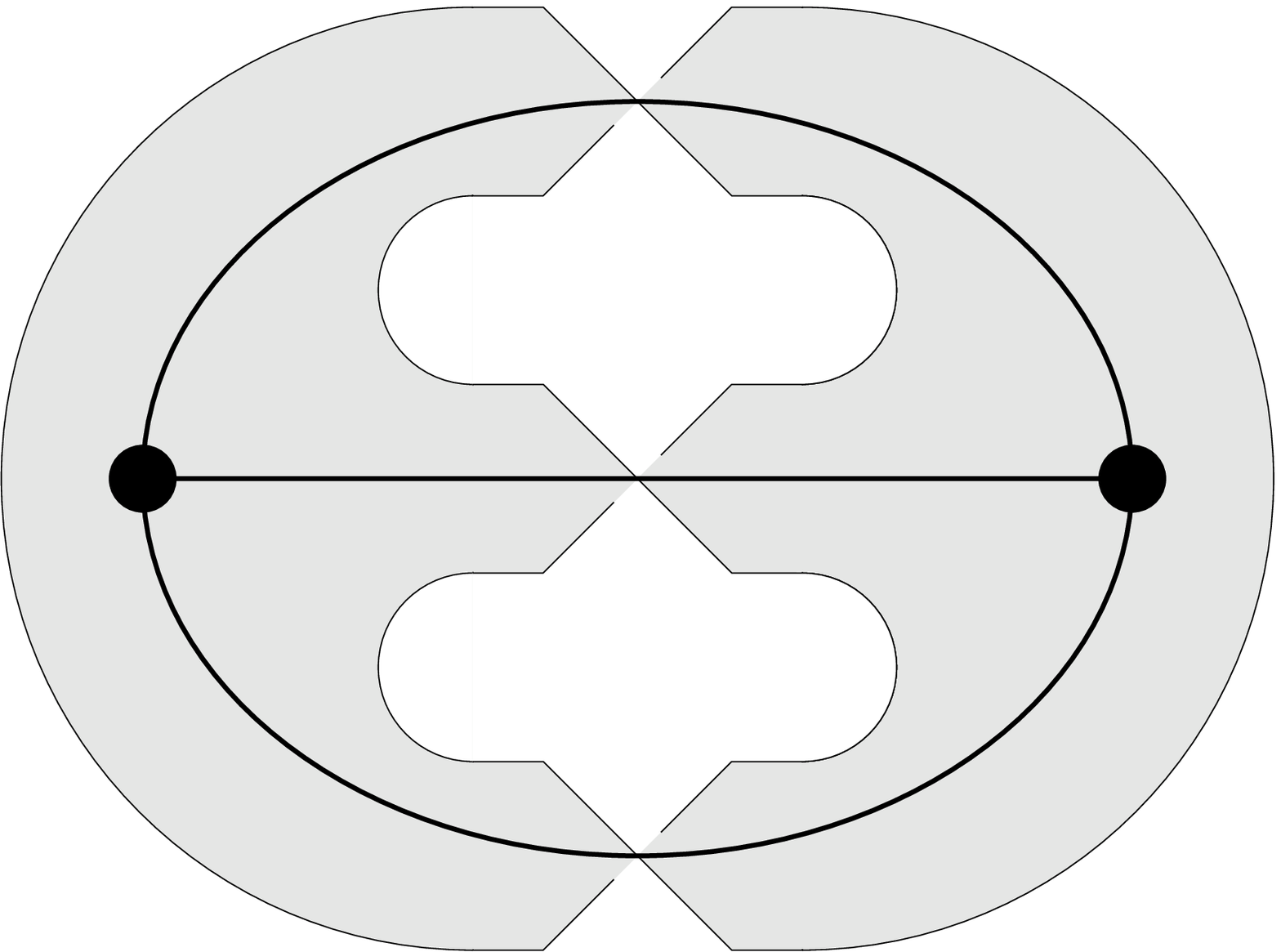}}\hspace*{2cm}\scalebox{0.35}{\includegraphics{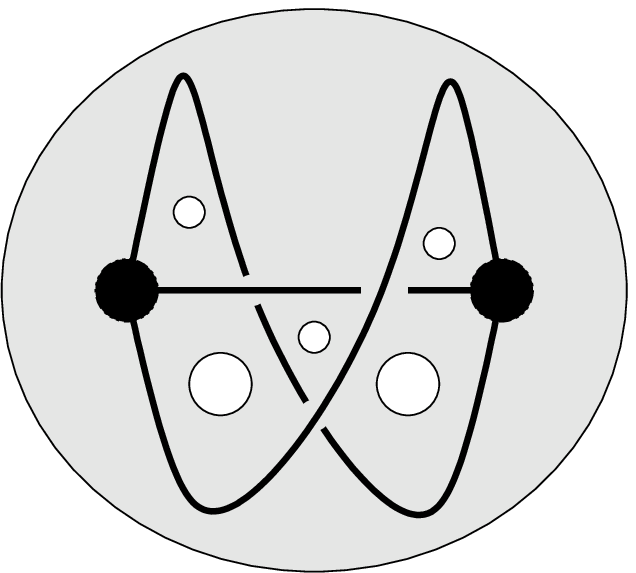}}\\
Figure 1
\end{center}
\begin{definition}
An {\agd} $\left(S,D\right)$ is obtained from another {\agd} $\left(S',D'\right)$ by an {\sf abstract Reidemeister move} of type I, II, III,
IV, V or VI if there exist 
embeddings $f : S\rightarrow F$, $f' : S'\rightarrow F$ for a closed  surface $F$, so 
that $f\left(D\right)$ is obtained from $f'\left(D'\right)$ by a Reidemeister move resp.~of  type  I to VI on $F$.
\end{definition}
Reidemeister moves are shown in \cite{flemmell01}, figure 2.
\begin{definition}
Two {\agd s} are said to be {\sf abstract Reidemeister move equivalent} 
or {\sf equivalent} if one is transformed into the other by a 
finite sequence of abstract Reidemeister moves.
\end{definition}
We denote the set of {\agd s} by $\cal{AG}$ and the corresponding set of equivalence classes by AG.
\section{Twisted  Graph Diagrams}
Extending classical graph diagrams by virtual crossings and virtual Reidemeister moves I$^*$ to V$^*$ we get 
virtual graph diagrams and virtual graphs. For  definitions see \cite{flemmell01}, chapter 2 and figure 4. 

We denote the set of virtual graph diagrams by $\cal{VG}$. The set of equivalence classes of $\cal{VG}$ generated
by Reidemeister moves I to VI and virtual Reidemeister moves I$^*$ to V$^*$ is denoted by VG. Following
\cite{bour06} we define {\it twisted graph diagrams} as virtual graph diagrams with bars on edges. 
The set of {\tgd s} is denoted by $\cal{TG}$. The set of equivalence classes generated by Reidemeister moves  I to VI , I$^*$ to V$^*$ and the 
twisted moves T1, T2, T3 and T4 of figure 2 is called TG. 
\begin{center}
\begin{minipage}{40pt}
\begin{picture}(40,40)
\qbezier(0,20)(0,20)(40,20)
\qbezier(20,0)(20,0)(20,40)
\put(17,33){\line(1,0){6}}
\end{picture}
\end{minipage}
=
\begin{minipage}{40pt}
\begin{picture}(40,40)
\qbezier(0,20)(0,20)(40,20)
\qbezier(20,0)(20,0)(20,40)
\put(17,7){\line(1,0){6}}
\end{picture}
\end{minipage}
\hspace*{30pt}
\begin{minipage}{12pt}
\begin{picture}(12,40)
\qbezier(6,0)(6,0)(6,40)
\put(3,15){\line(1,0){6}}
\put(3,25){\line(1,0){6}}
\end{picture}
\end{minipage}
=
\begin{minipage}{12pt}
\begin{picture}(12,40)
\qbezier(6,0)(6,0)(6,40)
\end{picture}
\end{minipage}
\hspace*{30pt}
\begin{minipage}{40pt}
\begin{picture}(40,40)
\qbezier(0,0)(3,3)(40,40)
\qbezier(0,40),(0,40)(17,23)
\qbezier(23,17)(23,17)(40,0)
\end{picture}
\end{minipage}
=
\begin{minipage}{70pt}
\begin{picture}(70,40)
\qbezier(0,40)(20,0)(35,20)
\qbezier(35,20)(50,40)(70,0)
\qbezier(0,0)(20,40)(33,22)
\qbezier(37,18)(50,0)(70,40)
\qbezier(66,2)(66,2)(70,5)
\qbezier(66,38)(66,38)(70,35)
\qbezier(4,2)(4,2)(0,5)
\qbezier(4,38)(4,38)(0,35)
\end{picture}
\end{minipage}\\[1ex]
\begin{minipage}{255pt}
T1 \hspace*{85pt} T2 \hspace*{83pt} T3
\end{minipage}\\[2ex]
\begin{minipage}{40pt}
\begin{picture}(40,40)
\qbezier(0,3)(25,40)(25,40)
\qbezier(13,0)(25,40)(25,40)
\qbezier(40,0)(25,40)(25,40)
\put(22,10){\circle*{1}}
\put(24,10){\circle*{1}}
\put(26,10){\circle*{1}}
\put(28,10){\circle*{1}}
\qbezier(2,10)(2,10)(6,7)
\qbezier(12,5)(12,5)(17,4)
\qbezier(36,4)(36,4)(40,6)
\put(25,39){\circle*{5}}
\end{picture}
\end{minipage}
=
\begin{minipage}{40pt}
\begin{picture}(40,40)
\qbezier(0,3)(0,10)(25,23)
\qbezier(25,23)(50,35)(25,40)
\qbezier(13,0)(13,7)(25,15)
\qbezier(25,15)(40,25)(25,40)
\qbezier(40,0)(40,7)(20,25)
\qbezier(20,25)(7,36)(25,40)
\put(22,6){\circle*{1}}
\put(24,6){\circle*{1}}
\put(26,6){\circle*{1}}
\put(28,6){\circle*{1}}
\put(22,30){\circle*{1}}
\put(24,30){\circle*{1}}
\put(26,30){\circle*{1}}
\put(28,30){\circle*{1}}
\put(25,39){\circle*{5}}
\end{picture}
\end{minipage}\\[1ex] T4 \\[2ex] Figure 2
\end{center}
 \section{Abstract vs. Twisted Graph Diagrams}
As in \cite{kamakama01} we define a map $\phi : \cal{TG}\rightarrow\cal{AG}$. In our setting, for a {\tgd} $E$ we have 
2-disks as regular neighborhoods 
for the crossings 
and the vertices. In figure 3 it is shown how the classical resp.~virtual crossings are replaced by a surface $S\subset \mathbb{R}^3$ 
and  a diagram $D$ on $S$. 
\begin{enumerate}
\item classical crossing
\raisebox{-40pt}{\hspace*{30pt}
\begin{picture}(40,40)
\qbezier(5,20)(5,20)(35,20)
\qbezier(20,5)(20,5)(20,17)
\qbezier(20,35)(20,35)(20,23)
\end{picture}
\raisebox{18pt}{\hspace*{15pt}$\longmapsto$\hspace*{15pt}}
\begin{picture}(40,40)
\qbezier(0,20)(0,20)(40,20)
\qbezier(20,0)(20,0)(20,17)
\qbezier(20,40)(20,40)(20,23)
\qbezier(0,15)(15,15)(15,0)
\qbezier(25,0)(25,15)(40,15)
\qbezier(25,40)(25,25)(40,25)
\qbezier(0,25)(15,25)(15,40)
\end{picture}
}
\item\label{virtual} virtual crossing
\raisebox{-40pt}{\hspace*{30pt}
\begin{picture}(40,40)
\qbezier(5,20)(5,20)(35,20)
\qbezier(20,5)(20,5)(20,35)
\end{picture}
\raisebox{18pt}{\hspace*{15pt}$\longmapsto$\hspace*{15pt}}
\raisebox{-5pt}{\scalebox{0.7}{\includegraphics{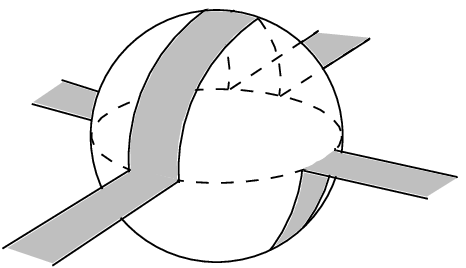}}}
}
\item vertex
\raisebox{-32pt}{\hspace*{50pt}
\begin{picture}(40,40)
\qbezier(20,20)(20,20)(10,0)
\qbezier(20,20)(20,20)(40,5)
\qbezier(20,20)(20,20)(17,40)
\put(20,20){\circle*{4}}
\end{picture}
\raisebox{20pt}{\hspace*{15pt}$\longmapsto$\hspace*{15pt}}
\begin{picture}(40,40)
\qbezier(20,20)(20,20)(10,0)
\qbezier(20,20)(20,20)(40,5)
\qbezier(20,20)(20,20)(17,40)
\qbezier(5,0)(17,20)(12,40)
\qbezier(15,0)(20,18)(38,2)
\qbezier(22,40)(23,20)(40,10)
\put(20,20){\circle*{4}}
\end{picture}}
\item\label{bar} bar
\raisebox{-20pt}{\hspace*{30pt}
\begin{picture}(40,40)
\qbezier(20,22)(20,20)(20,18)
\qbezier(5,20)(20,20)(35,20)
\end{picture}
\raisebox{17pt}{\hspace*{15pt}$\longmapsto$\hspace*{15pt}}
\raisebox{10pt}{\scalebox{0.5}{\includegraphics{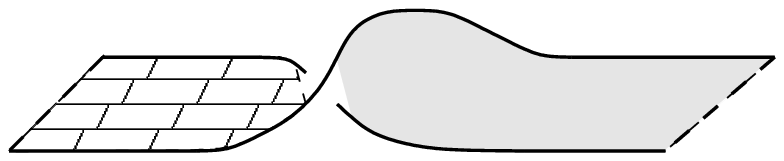}}}
}
\end{enumerate}\begin{center}
Figure 3
\end{center}
Note that up to homeomorphism in \ref{virtual}.~and \ref{bar}.~the surface does not depend on the sign of the crossing of the bands resp.~the twist. We define $\phi\left(E\right):=\left(S,D\right)$.
\begin{theorem}
The map $\Phi : TG \rightarrow AG$ defined by $\Phi\left(\left[E\right]\right):=\left[\phi\left(E\right)\right]$ is a bijection.
\end{theorem}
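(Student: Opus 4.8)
The strategy is to construct an explicit inverse $\Psi : AG \to TG$, following the scheme used for links in \cite{kamakama01} but adapted to the non-orientable (barred) setting and to underlying graphs carrying vertices and circle components.

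First I would define a \emph{presentation map} $\psi : \cal{AG}\rightarrow\cal{TG}$. Given an \agd\ $(S,D)$, one embeds the disk/band surface $S$ in $\mathbb{R}^3$ --- this is always possible, since $S$ deformation retracts to $D$ and hence is a compact surface with non-empty boundary --- and puts it into ``standard position'': the disks that are the regular neighbourhoods of the crossings and of the vertices of $D$ lie flat in a horizontal plane $P$, and the bands are thin ribbons attached along their boundary, each ribbon being unknotted apart from a prescribed number of half-twists gathered in one place. Projecting $S$ together with $D$ orthogonally onto $P$ and reading off the local pictures of Figure 3 gives a \tgd\ $E=\psi(S,D)$: the crossing disks carry the classical crossings of $D$ (over/under is retained because they are flat), the vertex disks carry its vertices, every point where the image of one ribbon runs over the image of another ribbon or over itself becomes a virtual crossing, and every half-twist of a ribbon becomes a bar (so a full twist produces two adjacent bars, removed by move T2). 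Inspecting the local models of Figure 3 one checks that $\phi(\psi(S,D))$ is homeomorphic, as an \agd, to $(S,D)$; since homeomorphic \agd s may be co-embedded in a common closed surface and hence lie in the same class of AG, this gives $\Phi([\psi(S,D)])=[(S,D)]$.

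Next I would check that $\phi$ and $\psi$ are compatible with the equivalence relations, which makes $\Phi$ and $\Psi:=[\psi]$ well defined and mutually inverse. For $\phi$: a Reidemeister move I--VI carried out inside a disk of $E$ becomes, essentially by the definition of an abstract Reidemeister move, the same move performed after embedding into a closed surface, so $\phi(E)\sim\phi(E')$; a virtual move I$^*$--V$^*$ or a twisted move T1--T4 leaves $\phi(E)$ unchanged up to homeomorphism of \agd s, the key point being the remark after Figure 3 that, up to homeomorphism, the local surface at a virtual crossing or a bar does not depend on these signs resp.\ on the handedness of the twist, so again $\phi(E)\sim\phi(E')$. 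For $\psi$ one must show: (i) the TG-class of $\psi(S,D)$ is independent of the embedding of $S$ in $\mathbb{R}^3$ and of the chosen standard position and projection; and (ii) an abstract Reidemeister move between $(S,D)$ and $(S',D')$ yields TG-equivalent diagrams. For (ii) one chooses the presentations of $(S,D)$ and $(S',D')$ compatibly with the embeddings into the common closed surface that witness the abstract move, so that the Reidemeister move on that surface is supported in a disk lying in the projection and translates directly into the corresponding Reidemeister move on the \tgd s, up to preliminary and concluding virtual and twisted moves that put the diagrams into projected form. Finally one verifies that $\psi(\phi(E))$ is obtained from $E$ by virtual moves I$^*$--V$^*$ and twisted moves T1--T4 alone --- the auxiliary ribbons that $\phi$ glues in at classical crossings, virtual crossings, vertices and bars are reabsorbed by such moves upon re-projecting --- whence $\Psi\circ\Phi=\mathrm{id}$; combined with $\Phi\circ\Psi=\mathrm{id}$ from the preceding paragraph this proves the theorem.

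The main obstacle is point (i): the well-definedness of $\psi$ under a change of presentation. Two standard positions of the same $S$ are related by an ambient isotopy of $S$ in $\mathbb{R}^3$, and one has to follow how the projected \tgd\ changes along such an isotopy. A ribbon passing across another ribbon, or the foot of a ribbon sliding around a disk, is absorbed by the virtual moves I$^*$--V$^*$; redistributing and re-projecting the half-twists of a ribbon --- including a pair of twists created or destroyed by rerouting --- is exactly what the twisted moves T1 (a bar past a classical crossing), T3 (a bar past a virtual crossing), T4 (bars at a vertex) and T2 (a pair of adjacent bars cancels) absorb, sliding a bar along an edge being a planar isotopy. This twist bookkeeping, which has no counterpart in the orientable link case of \cite{kamakama01}, is where the real work lies; the presence of graph vertices and of circle components merely adds routine extra cases to each of the verifications above.
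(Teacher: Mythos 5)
Your overall architecture coincides with the paper's: you build the same candidate inverse $\psi$ (embed the disk/band surface in $\R^3$, choose a regular projection, read classical crossings off the crossing disks, virtual crossings off the remaining double points of the projected bands, and bars off the half-twists), verify that $\phi$ respects all the moves, and conclude from $\Psi\circ\Phi=\mathrm{id}$ and $\Phi\circ\Psi=\mathrm{id}$. The step you single out as ``where the real work lies''---independence of $\psi(S,D)$ from the chosen presentation---is indeed the crux, and it is exactly the paper's proposition comparing two embeddings $g,g'$ with two regular projections $p,p'$.

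However, your proposed treatment of that step rests on a false premise. You reduce it to ``two standard positions of the same $S$ are related by an ambient isotopy of $S$ in $\mathbb{R}^3$'' and then track the projected diagram along the isotopy. Two embeddings of the same disk/band surface into $\R^3$ need not be ambient isotopic: the bands may be knotted or mutually linked in one embedding and not in the other (already for one disk with two bands the over/under pattern of the bands is a free choice that changes the isotopy class without changing the surface). An isotopy argument therefore only proves invariance within one isotopy class of embeddings and misses precisely the phenomenon that makes the virtual/twisted calculus necessary. The paper closes this gap combinatorially rather than geometrically: (a) the two projected diagrams have the same Gauss data, and two virtual graph diagrams with the same Gauss data are related by virtual moves alone (the detour-type argument, which is also where one checks that the forbidden move VI$^*$ is not needed); and (b) after flipping some disks by T3/T4 so that corresponding disks acquire coherent orientations, corresponding bands have the same sign in the sense of Bollob\'as--Riordan, hence corresponding arcs carry the same number of bars modulo~$2$, which T1--T4 then match up. You would need an argument of this kind---or a proof that your ``standard position'' is unique up to ambient isotopy, which it is not---to make your point (i) go through. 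A second, smaller soft spot: you assert $\phi(\psi(S,D))\approx(S,D)$ by ``inspecting the local models'', but the local models only identify the pieces; to see that the reassembled surface is homeomorphic to $S$ one still has to check that corresponding bands are attached with equal signs, which is the content of the paper's system-of-oriented-disks lemma.
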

Before we give a proof of the theorem we construct a map $\psi : \cal{AG} \rightarrow$ TG and define $\Psi :$ AG 
$\rightarrow$ TG to be $\Psi\left(\left[\left(S,D\right)\right]\right):=\psi\left(\left(S,D\right)\right)$.

We remind the reader of the following notion from \cite{yasuhara:96}: Let $P\subset \mathbb{R}^3$ be a plane and $p:\mathbb{R}^3\rightarrow P$
a projection. The projection $p$ is {\it regular} for a disk/band surface $S\subset\mathbb{R}^3$ if the following conditions are satisfied:
\begin{enumerate}
\item For each $y\in p\left(S\right)$, $p^{-1}\left(y\right)\cap S$ consists of either one, two or infinitely many points.
\item\label{cond2} If $p^{-1}(y) \cap S$ consists of two points, then there are  two band parts $B_i, B_j$ of $S$ with 
	$y\in p\left(B_i\right)\cap p\left(B_j\right)$ such that  $p\left(B_i\right)$ and $p\left(B_j\right)$ meet as in figure 4. 
\item\label{cond3} If $p^{-1}\left(y\right) \cap S$ consists of infinitely many points, then there is exactly one band part 
	$B$ of $S$ with $y\in p\left(B\right)$ such that $p\left(B\right)$ is as in figure 5. 
\end{enumerate}
\begin{center}
\scalebox{0.7}{\includegraphics{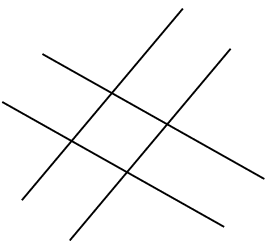}}\hspace*{3cm}\raisebox{17pt}{\scalebox{0.7}{\includegraphics{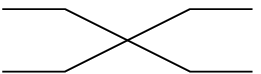}}} 
\\[1ex] Figure 4\hspace*{100pt}Figure 5
\end{center}
Let $\left(S,D\right)\in\cal{AG}$, $g: S\rightarrow \R^3$ an embedding and $p$ a regular projection for the disk/band surface 
$g\left(S\right)$. Consider $p\circ g\left(S\right)$ as a virtual graph diagram as follows: those double points of $p\circ g(D)$
belonging to the images of crossings of $D$ on $S$ are labelled with the corresponding over/under information. 
The remaining double points are considered as virtual crossings. 
Now we define a {\tgd} $E$ by adding a bar for every singularity like figure 5 coming from the image of $S$ under $p\circ g$. 
Then we set $\psi\left(\left(S,D\right)\right):=\left[E\right]$.
In the following propositions \ref{well phi} to \ref{well Psi} it is shown that  the maps $\phi$, $\Phi$, $\psi$ and $\Psi$ are well-defined.
\begin{proposition}\label{well phi}
$\phi$ is well-defined.
\end{proposition}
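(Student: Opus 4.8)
The plan is to show, for a {\tgd} $E$ with $\phi(E)=(S,D)$, first that $(S,D)$ is an {\agd}, and then that it is determined by $E$ up to homeomorphism of pairs, so that $\phi$ is a well-defined map $\cal{TG}\rightarrow\cal{AG}$.

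First I would check that $S$ is a disk/band surface and $D$ a regular graph diagram on it. Following figure 3 we attach a $2$-disk ($0$-handle) at each classical crossing and at each vertex of $E$, the local surface piece that figure 3 assigns to a virtual crossing at each virtual crossing, and a band ($1$-handle) along each edge of $E$, inserting a half-twist in the band for every bar lying on that edge. Gluing these pieces along the boundary arcs that correspond to the edge-ends of $E$ yields a compact surface $S$ equipped with a handle decomposition into $0$- and $1$-handles, that is, a disk/band surface in the sense used here (non-orientable ones being allowed, the half-twisted bands cause no difficulty). The diagram $D$ is the union of the local pictures of figure 3, the strands being joined along the core arcs of the bands; it lies in the interior of $S$, and its only multiple points are the transverse double points inside the disks at the classical crossings, each carrying the over/under data inherited from $E$ --- at a (former) virtual crossing the two strands run on distinct bands and are disjoint in $D$. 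Hence $D$ is a regular graph diagram on $S$.

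Next I would build the strong deformation retraction of $S$ onto $D$. On each $0$-handle use the obvious strong deformation retraction onto the crossing or vertex configuration drawn on it, on the piece replacing a virtual crossing the retraction onto the two disjoint strands it carries, and on each band the retraction onto its core arc (the half-twist of a bar being irrelevant here). These local homotopies agree along the arcs by which the pieces are glued, since each such arc meets $D$ in the single point where a strand leaves one piece to enter the core of the adjacent band; so they patch to a strong deformation retraction of $S$ onto $D$, and $(S,D)$ is an {\agd}.

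For the well-definedness itself I would note that the only freedom in the construction is, first, the choice of regular neighborhoods of the crossings and vertices and of the bands along the edges, which are unique up to ambient isotopy in $\mathbb{R}^3$ and hence change $(S,D)$ only by a homeomorphism of pairs; and second, the sign of a classical or virtual crossing and the direction of the half-twist at a bar, which (as remarked just after figure 3 for the virtual-crossing and bar cases) leave $S$ unchanged up to homeomorphism and carry the over/under labels of $D$ along unchanged, the cyclic order of the half-edges at a vertex being faithfully recorded by its $0$-handle. Hence $\phi(E)\in\cal{AG}$ is well-defined. The step I expect to need the most care is the patching in the previous paragraph: one must arrange that the local retractions actually match on the attaching arcs, giving a \emph{strong} deformation retraction with $D$ pointwise fixed, and one must make sure that a bar genuinely amounts to inserting a $1$-handle with a flip rather than to an operation that would destroy the disk/band structure of $S$.
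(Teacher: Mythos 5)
Your proposal is correct and is essentially a careful unpacking of the paper's own (one-line) proof, which simply observes that $\phi$ is given by the explicit local construction of figure 3 so that "by construction we have nothing to prove"; the independence-of-choices point you raise at the end is exactly the remark the paper makes just after figure 3 about the surface not depending on the sign of the band crossing or the direction of the twist.
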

\begin{proof} By construction we have nothing to prove.
\end{proof}
\begin{proposition}\label{Phi}
$\Phi$ is well-defined.
\end{proposition}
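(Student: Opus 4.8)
\textbf{Proof plan for Proposition \ref{Phi}.}

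The plan is to show that $\Phi$ does not depend on the choice of representative $E$ of the class $[E]\in TG$, i.e.\ that if $E$ and $E'$ are related by a single generating move then $\phi(E)$ and $\phi(E')$ are abstract Reidemeister move equivalent. Since TG is generated by the classical Reidemeister moves I--VI, the virtual moves I$^*$--V$^*$, and the twisted moves T1--T4, it suffices to check each family of moves separately. First I would handle the classical Reidemeister moves: a move I--VI performed on $E$ inside a disk neighborhood is, after applying $\phi$, exactly the same move performed on the diagram $D\subset S$ inside the corresponding disk of the disk/band surface, so $\phi(E)$ and $\phi(E')$ differ by a Reidemeister move of the same type carried out on $S$ itself; embedding $S$ in a closed surface $F$ (which is possible since $S$ is a disk/band surface) then exhibits this as an abstract Reidemeister move. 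Hence these cases are immediate from the definition of abstract Reidemeister equivalence.

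Next I would treat the virtual moves and the twisted moves, which are the substantive cases. For each local picture appearing in a virtual move I$^*$--V$^*$ or a twisted move T1--T4, the map $\phi$ replaces the virtual crossings and bars by the standard surface pieces of figure 3 (items \ref{virtual} and \ref{bar}), and one must check that the two resulting disk/band surfaces, together with the diagrams on them, are related by an ambient homeomorphism that is the identity outside the region of the move --- equivalently, that they become isotopic (hence Reidemeister-equivalently identical) after embedding into a common closed surface $F$. The key observations making this work are already recorded in the excerpt: by the remark following figure 3, the surface piece attached to a virtual crossing or a bar is independent of the sign of the band crossing resp.\ the direction of the twist, so moves like T1 (sliding a bar past a vertex/crossing) and the virtual detour moves correspond to genuine homeomorphisms of the assembled surface. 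For T2 (two bars on an edge cancel) one uses that two successive twists of a band yield a band homeomorphic rel boundary to the untwisted band; for T3 and T4 one checks that the two sides build homeomorphic surfaces with matching boundary identifications. In every case the local homeomorphism extends by the identity and, after choosing compatible embeddings of the two surfaces into a closed surface $F$, realizes $\phi(E)$ and $\phi(E')$ as abstract Reidemeister move equivalent (in fact as the same $\agd$ up to the homeomorphism, which is itself an instance of abstract equivalence).

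Finally, I would note that $\Phi$ is automatically well-defined on the target side: $\phi(E)$ is literally an element of $\cal{AG}$ by Proposition \ref{well phi}, so $[\phi(E)]$ is a well-defined class in AG. Combining this with the move-by-move verification above shows that $[E]=[E']$ in TG implies $[\phi(E)]=[\phi(E')]$ in AG, so $\Phi([E]):=[\phi(E)]$ is well-defined. The main obstacle I anticipate is the careful bookkeeping for the twisted moves T2--T4: one must verify that the non-orientable band pieces assembled from the bars glue up to homeomorphic surfaces with the correct boundary framings, since a sign error in how a twist is attached would break the homeomorphism; the orientation-reversing flexibility allowed by the non-orientable disk/band surfaces (as emphasized in the introduction) is precisely what is needed here, and I would lean on the explicit local models of figure 3 to make each identification concrete rather than appealing to a general position argument.
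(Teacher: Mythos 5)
Your plan follows essentially the same route as the paper: it handles the classical moves I--VI by producing the two disk/band surfaces, identical outside the disk of the move, and embedding them in a common closed surface to realize an abstract Reidemeister move, and it handles the virtual moves I$^*$--V$^*$ and the twisted moves T1--T4 by showing the resulting disk/band surfaces (with their diagrams) are homeomorphic, using that the local surface pieces of figure 3 do not depend on the sign of the band crossing or the direction of the twist, with T2 via cancellation/combination of half-twists and T3/T4 via a homeomorphism supported near the disk (the paper makes this explicit as a rotation/flip about a horizontal resp.\ vertical axis). This matches the paper's proof, so your proposal is correct and takes essentially the same approach.
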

\newcommand{\vgs}{\mbox{VG}'}
\begin{proofX}
Let $D,E \in \cal{TG}$. We have to show that $\phi(D)$ is equivalent to $\phi(E)$ as abstract graph diagrams
for $[D]=[E] \in$ TG. Suppose $D$ and $E$ differ by Reidemeister move VI. 
Thus they are identical outside a 2-disk $\Sigma\subset\mathbb{R}^2$. Abstract graph diagrams $\left(S_D,G_D\right)$ 
and $\left(S_E,G_E\right)$ embedded in three-dimensional space and being identical outside $\Sigma$ can be 
constructed. This is indicated in figure 6. As $S_D\cup\Sigma$ is homeomorphic to $S_E\cup\Sigma$ they are contained in 
a closed  surface constructed by glueing 2-disks to their boundary components. By definition of 
$\phi$ we have $\phi(D)=\left(S_D,G_D\right)$ and $\phi(E)=\left(S_E,G_E\right)$ since {\agd s} are considered
up to homeomorphism. Hence $\phi(D)$ is obtained from 
$\phi(E)$ by an abstract Reidemeister move. 

\begin{center}
\resizebox{2cm}{1.8cm}{\includegraphics{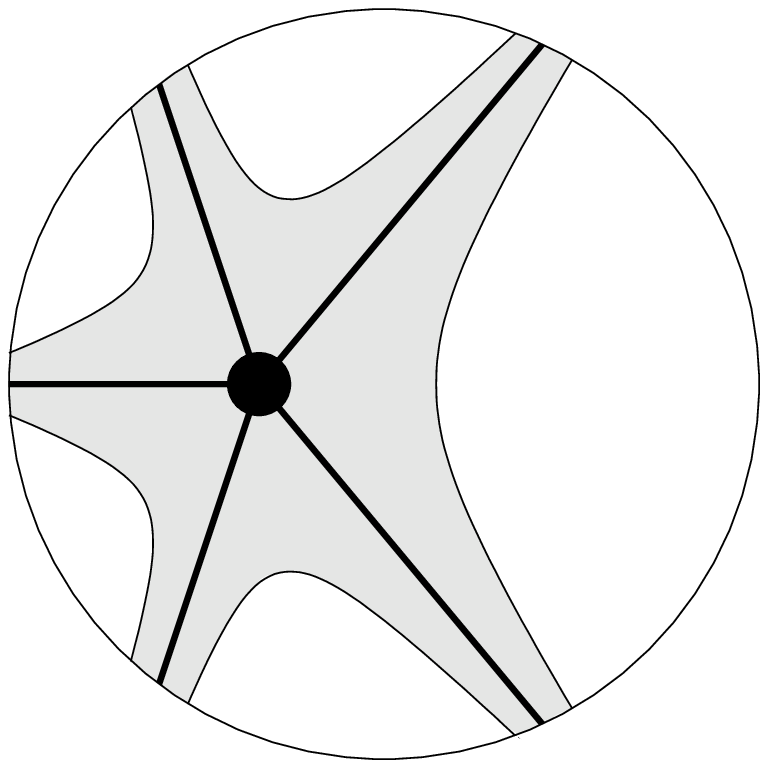}}\hspace*{2cm}
\resizebox{2cm}{1.8cm}{\includegraphics{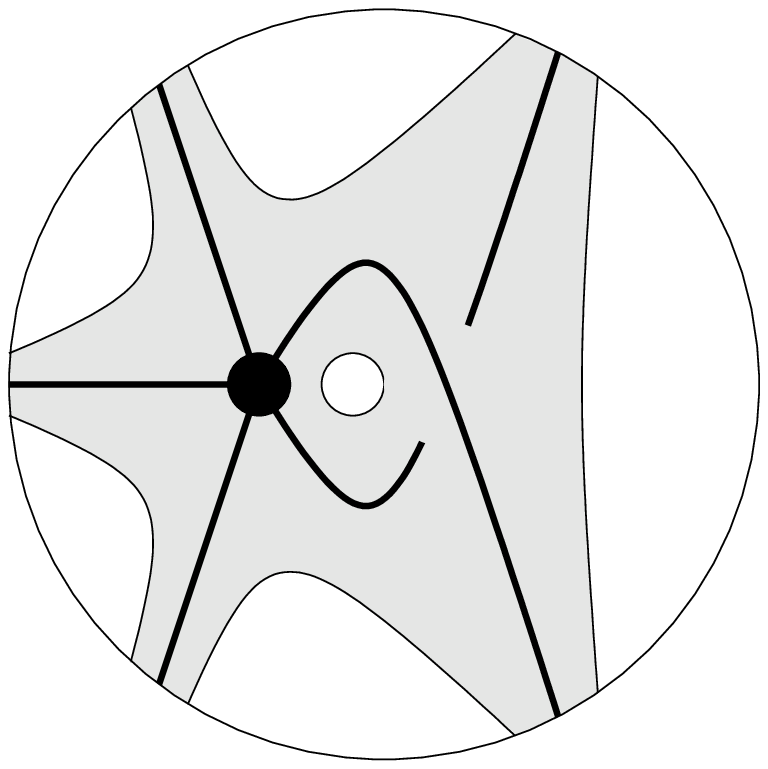}}\\
Figure 6
\end{center}

The remaining Reidemeister moves I, II, III, IV and V can be treated in an analogue manner.

Now suppose $D$ and $E$ differ by Reidemeister move IV$^*$. It is shown in figure 7 how the abstract graph 
diagrams can be obtained with respect to the disk $\Sigma$. There are several  possible ways
to choose the over/under behaviour of the bands inside a suitable neighborhood of the  disk, but this does not 
affect the type of the surface up to homeomorphism. Thus $\phi(D)\approx\phi(E)$, i.e. 
$\left[\phi(D)\right]=\left[\phi(E)\right]\in$ AG.

\begin{center}
\scalebox{0.25}{\includegraphics{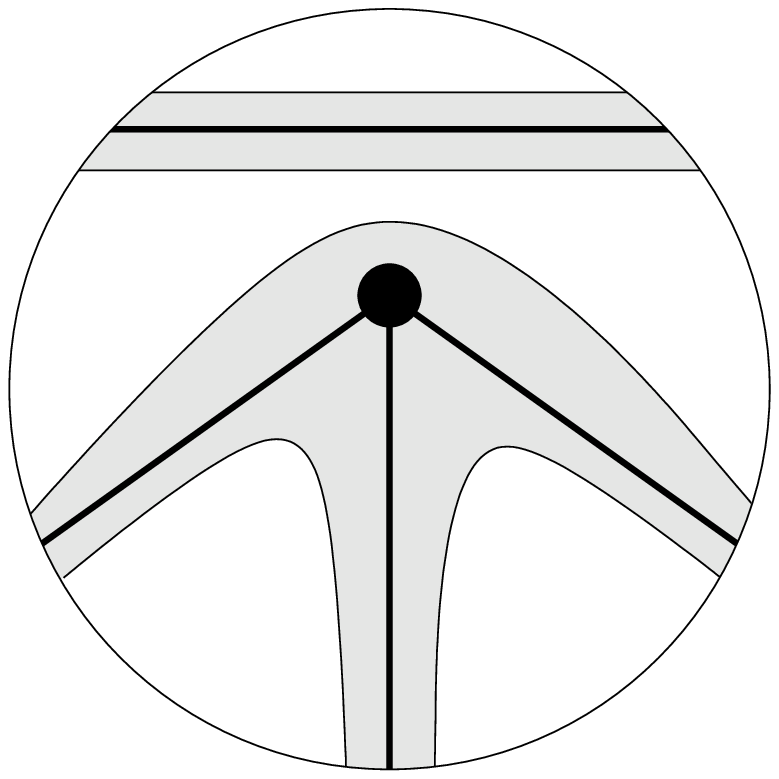}}\hspace*{1,5cm}
\scalebox{0.25}{\includegraphics{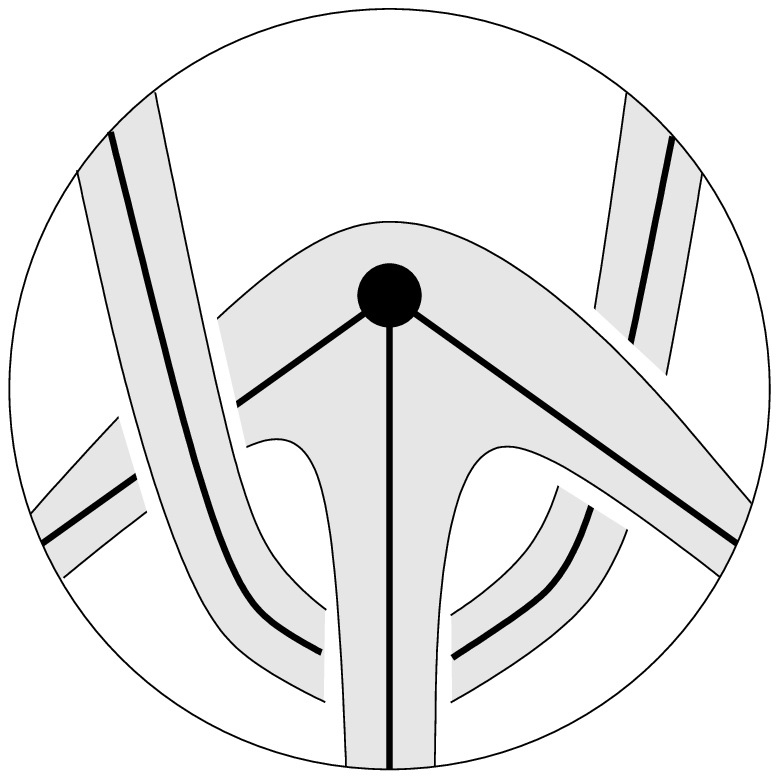}}\\
Figure 7
\end{center}

The remaining Reidemeister moves I$^*$, II$^*$, III$^*$ and V$^*$ can be treated in an analogue manner.

Now suppose $D$ and $E$ differ by a twisted move T2 inside the disk $\Sigma$. Obviously the correspondig 
{\agd s} are homeomorphic by the definition of $\phi$ in \ref{bar}., since two half-twists either cancel or become a full-twist. 
If $D$ and $E$ differ by T1, we argue just as in the case of pure virtual moves: one possible result of constructing 
the {\agd s} is shown in figure 8.
\begin{center}
\scalebox{0.5}{\includegraphics{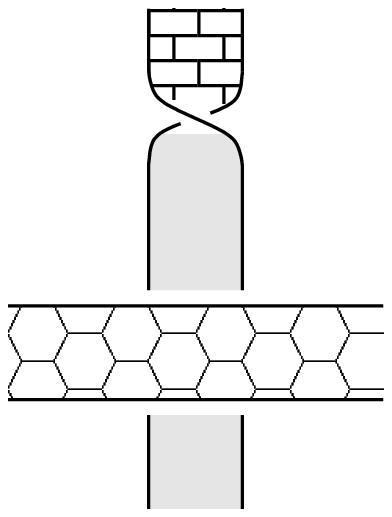}}\raisebox{40pt}{\hspace*{30pt}$\approx$\hspace*{30pt}}\scalebox{0.5}{\includegraphics{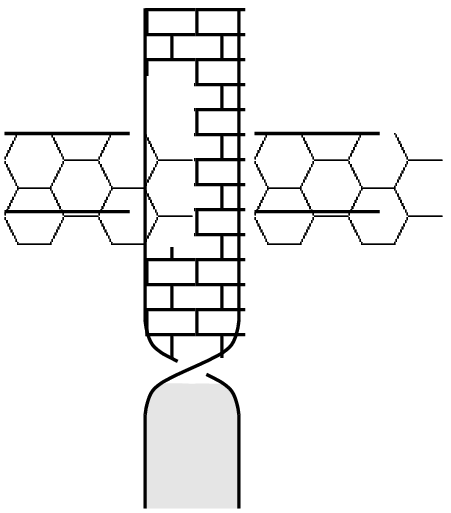}}\\ Figure 8
\end{center}
In figure 9 we see how a homeomorphism  may be obtained in the case of a T3-move. Rotate
 the surface around an horizontal axis and keep it fixed outside a suitable neighborhood of $\Sigma$. 
\begin{center}
\scalebox{0.6}{\includegraphics{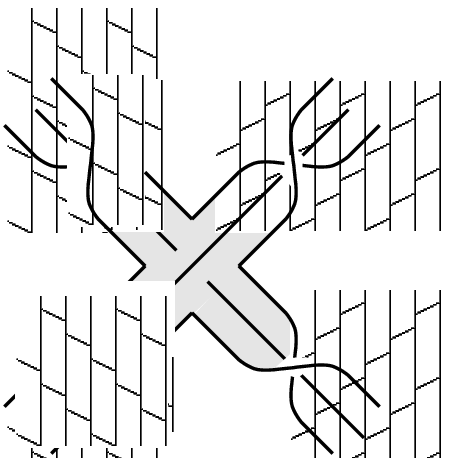}}\raisebox{20pt}{\hspace*{30pt}$\approx$\hspace*{30pt}}
\scalebox{0.6}{\includegraphics{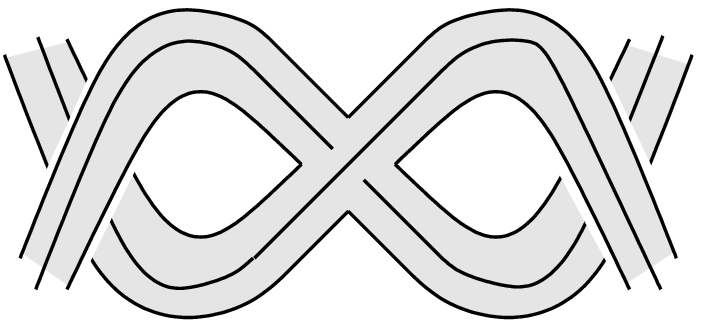}}\\ Figure 9
\end{center}
In the same way we treat the T4-move, i.e. $\hspace{-7pt}$ flipping the surface around an appropiate vertical axis.
\newline
\end{proofX}
\begin{remark} For Reidemeister move VI$^*$ of \cite{flemmell01}, figure 5 the proof of Propo- 
\begin{center}
\scalebox{0.3}{\includegraphics{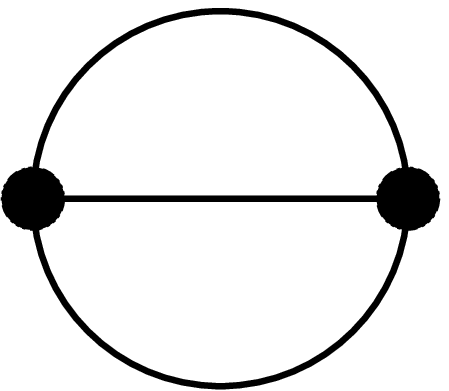}}\hspace*{10pt}\raisebox{13pt}{$\sim$}\hspace*{10pt}
\raisebox{-3pt}{\scalebox{0.3}{\includegraphics{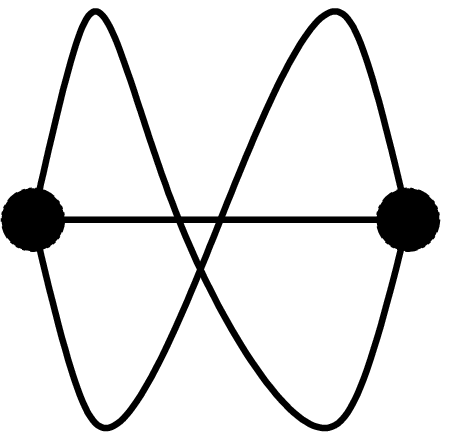}}}\hspace*{30pt}
\raisebox{-7pt}{
\scalebox{0.3}{\includegraphics{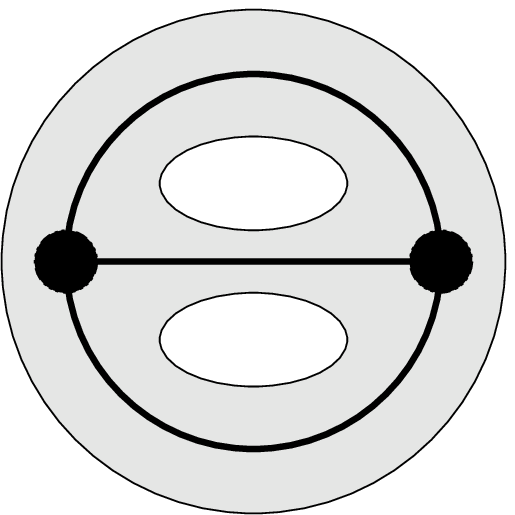}}\hspace*{10pt}\raisebox{22pt}{$\not\approx$}\hspace*{10pt}\scalebox{0.3}{\includegraphics{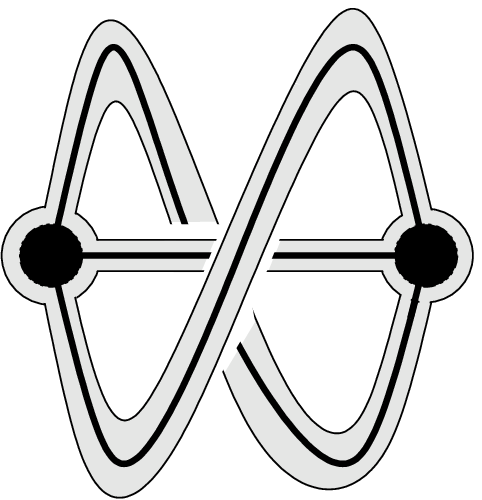}}}\\[1ex]
Figure 10
\end{center}
sition \ref{Phi}  does not work, because the 
corresponding surfaces may not be homeomorphic. An example is shown in figure 10.
\end{remark}
\begin{definition}\cite{kamakama01}
Let $D$ and $E$ be virtual graph diagrams of the same underlying graph such that they are identical inside
regular neighborhoods $N_1,\ldots, N_m$ of the crossings and the vertices. 
For $X\in \left\{D,E\right\}$ put  $W_X:=X\cap\overline{\left(\mathbb{R}^2\setminus 
\bigcup N_i\right)}$.
Then the  set $W_X$ is a union of immersed arcs. 
The diagrams $D$ and $E$ have the {\sf same Gauss data}, if there is a 1-1-correspondence between their immersed 
arcs $W_D$ and $W_E$ with respect to their boundary points in the union of the neighborhoods.   
\end{definition}
\begin{proposition}\label{gauss}
Two virtual graph diagrams  represent the same equivalence class in VG if they have the same Gauss data.
\end{proposition}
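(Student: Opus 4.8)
The plan is to prove the stronger statement that $D$ and $E$ can be connected using only the virtual Reidemeister moves I$^*$--V$^*$ together with planar isotopy, no classical move being required; this is the graph analogue of the corresponding fact in \cite{kamakama01}. Fix regular neighborhoods $N_1,\dots,N_m$ of all crossings and vertices inside which $D$ and $E$ coincide, and put $\Sigma_0:=\overline{\mathbb{R}^2\setminus\bigcup_i N_i}$. Then $W_D$ and $W_E$ are properly immersed $1$-manifolds in the planar surface $\Sigma_0$, all of whose double points are virtual crossings (every classical crossing lies inside some $N_i$), with the same boundary points on $\partial\Sigma_0$; by the same-Gauss-data hypothesis there is a $1$--$1$ correspondence between the components of $W_D$ and those of $W_E$ matching each component to one with the same pair of endpoints. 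It therefore suffices to transform $W_D$ into $W_E$ rel $\partial\Sigma_0$ — hence rel $\bigcup_i N_i$ — by planar isotopy and virtual Reidemeister moves.

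The tool I would use is the \emph{detour move}: if $\alpha$ is an arc of a virtual graph diagram along which every crossing is virtual, then $\alpha$ may be replaced by any embedded arc in $\Sigma_0$ with the same endpoints, the new arc meeting the rest of the diagram transversally in virtual crossings only. I would include the standard short argument: a generic homotopy of $\alpha$ within $\Sigma_0$ decomposes into finitely many elementary steps — finger moves (self-crossing births/deaths), bigon births/deaths against other strands, and triangle passages — and each of these is realized by one of the moves I$^*$, II$^*$, III$^*$ (only these occur, since the homotopy stays clear of the neighborhoods, so no virtual crossing ever passes a vertex or a classical crossing; if one preferred to let the rerouted arc brush a neighborhood, IV$^*$ and V$^*$ would cover that case). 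Note there is no framing issue at the endpoints: since $D$ and $E$ agree inside the $N_i$ and the Gauss data records endpoints in $\bigcup_i N_i$, the endpoint of each component of $W_D$ on $\partial N_i$ is exactly the endpoint of its counterpart in $W_E$.

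With this in hand I would enumerate the components $\alpha_1,\dots,\alpha_k$ of $W_D$ and their counterparts $\beta_1,\dots,\beta_k$ of $W_E$ under the given correspondence and reroute them one at a time. Suppose $\alpha_1,\dots,\alpha_{j-1}$ have already been carried onto $\beta_1,\dots,\beta_{j-1}$. Applying the detour move to $\alpha_j$, and using that $\beta_j$ is an embedded arc in $\Sigma_0$ with the endpoints already shared by $\alpha_j$, we bring $\alpha_j$ onto $\beta_j$; the rerouting produces only virtual crossings of $\beta_j$ with the already-placed $\beta_i$ ($i<j$) and with the not-yet-moved $\alpha_i$ ($i>j$), and its intermediate self-crossings are cancelled by II$^*$ (so the final $\alpha_j=\beta_j$ is again embedded). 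After step $k$ every component of $W_D$ has been carried onto the corresponding component of $W_E$, and the only remaining crossings are the mutual virtual crossings of the $\beta_i$, which are precisely the virtual crossings of $E$. Hence the resulting diagram is $E$, so $[D]=[E]$ in VG.

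The main obstacle is the detour step: one must be sure that an arbitrary homotopy of an arc in the holed planar surface $\Sigma_0$ — in particular one that must sweep the arc around a hole $N_i$ and across the strands emanating from it — is realized by the permitted virtual moves, and that nothing is ever forced inside a neighborhood $N_i$. Keeping the homotopy confined to $\Sigma_0$ and generic (transverse to all strands, at most one triple point at a time, arc endpoints fixed) reduces this to the finitely many local pictures, each matching I$^*$, II$^*$ or III$^*$; the bookkeeping that no superfluous crossings survive — so that the end diagram is exactly $E$ and not merely $E$ with some cancellable virtual kinks — is then routine.
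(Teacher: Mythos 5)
Your proof is correct and is essentially the paper's own argument: the paper simply invokes the detour-move proof of Lemma 4.3 of Kamada--Kamada and observes that the forbidden move VI$^*$ is never needed, which is exactly the arc-by-arc rerouting you spell out. One small correction to your framing: since $\Sigma_0$ is not simply connected, $\alpha_j$ and $\beta_j$ are in general \emph{not} homotopic rel endpoints inside $\Sigma_0$, so the homotopy must sweep across some of the disks $N_i$ and the moves IV$^*$ and V$^*$ are genuinely required rather than an optional convenience --- precisely the case your parenthetical already covers.
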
 
\begin{proofX} As in the proof of Lemma 4.3 in \cite{kamakama01} the immersed arcs can be transformed 
into one another by a finite 
sequence of virtual Reidemeister moves up to isotopy. Aparently the forbidden move VI$^*$   is not required. 
\end{proofX} 
\begin{definition}\cite{bollrior02}\label{sign of band}
Let $S$ be a disk/band surface with an orientation chosen for every disk of $S$. Let $B$ be a band of $S$ with (possibly equal) 
incident disks $D_1$ and $D_2$. The {\sf sign} of $B$ is defined  to be $+1$ if the orientation  of $D_1$ is equal  to that of $D_2$ 
after moving  it along $B$. Otherwise it is defined to be $-1$. By an {\sf orientation of a band} we mean an orientation chosen for the 
topological disk belonging to the band.
\end{definition} 
\begin{proposition}\label{X}
Let $S$ be a disk/band surface with oriented disks and bands. For a band $B$ let $D_1, D_2$ be the incident
disks with $B\cap D_j=\partial B \cap \partial D_j=:I_j$ $\approx [0,1]$.
Then sign($B$) $=1$ if and only if  $\partial B$ induces  the same orientation  on $I_j$ as $\partial D_j$ for $j\in \{1,2\}$ or
$\partial B$ induces  the opposite orientation  on $I_j$ than $\partial D_j$ for $j\in \{1,2\}$.
\end{proposition}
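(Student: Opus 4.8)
The plan is to reduce the statement to a single sign identity and then prove that identity by bookkeeping orientations across the band. Fix the given orientations $\omega_{D_1},\omega_{D_2}$ of the disks and $\omega_B$ of the band, and for $j\in\{1,2\}$ let $\epsilon_j=+1$ if $\partial B$ and $\partial D_j$ induce the same orientation on $I_j$ and $\epsilon_j=-1$ otherwise. Since $\epsilon_1\epsilon_2=+1$ exactly when $\epsilon_1=\epsilon_2$, i.e.\ when $\partial B$ agrees with $\partial D_j$ on both $I_j$ or disagrees on both, the Proposition is equivalent to the identity $\signt{B}=\epsilon_1\epsilon_2$, and this is what I would prove. (Incidentally this also shows the two alternatives in the statement cannot depend on the choice of $\omega_B$, as they must not, since $\signt{B}$ does not.)

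The tool is the elementary gluing fact for oriented surfaces: if two oriented surfaces-with-boundary are glued along a common boundary arc, their orientations extend to a single orientation of the union precisely when the two induced boundary orientations on that arc are opposite (the familiar ``adjacent triangles'' picture). Recall that $\signt{B}$ is computed by transporting $\omega_{D_1}$ across $B$: there is a unique orientation $\omega_B'$ of $B$ compatible (in the above sense) with $\omega_{D_1}$ along $I_1$ --- unique because $B$ is a disk, hence connected --- and then a unique orientation $\omega_{D_2}'$ of $D_2$ compatible with $\omega_B'$ along $I_2$; by definition $\signt{B}=+1$ iff $\omega_{D_2}'=\omega_{D_2}$.

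Now I would carry out the sign bookkeeping. Write $\omega_B'=\eta\,\omega_B$ and $\omega_{D_2}'=\delta\,\omega_{D_2}$ with $\eta,\delta\in\{\pm1\}$, so that $\signt{B}=\delta$. By definition of $\epsilon_1$ the boundary orientation of $\omega_B$ on $I_1$ is $\epsilon_1$ times that of $\omega_{D_1}$, hence the boundary orientation of $\omega_B'$ on $I_1$ is $\eta\epsilon_1$ times that of $\omega_{D_1}$; compatibility along $I_1$ forces $\eta\epsilon_1=-1$, i.e.\ $\eta=-\epsilon_1$. Likewise, compatibility of $\omega_{D_2}'$ with $\omega_B'$ along $I_2$ says that the boundary orientation of $\omega_{D_2}'$ on $I_2$ is the opposite of that of $\omega_B'$ on $I_2$; expressing both sides as multiples of the boundary orientation of $\omega_{D_2}$ on $I_2$ --- namely $\delta$ on the left and $-\eta\epsilon_2$ on the right --- gives $\delta=-\eta\epsilon_2=\epsilon_1\epsilon_2$. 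Thus $\signt{B}=\delta=\epsilon_1\epsilon_2$, which is the desired identity.

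The only points needing care are orientation conventions and the possibility $D_1=D_2$. Since $\signt{B}$ and the whole argument are phrased purely in terms of what happens in small collar neighborhoods of $I_1$ and of $I_2$ \emph{separately}, the particular convention for the induced boundary orientation (outward-normal first or last) is irrelevant, as is whether the two incident disks coincide; no global orientability of $D_1\cup B\cup D_2$ is used or assumed. I expect this is essentially the whole content: the statement just records that transporting an orientation across a band multiplies by the two ``matching signs'' at its two ends, packaged through the gluing lemma.
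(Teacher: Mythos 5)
Your proof is correct and is essentially the paper's own argument: both reduce the claim to transporting the orientation of $D_1$ across $B$ and tracking a sign flip at each attaching arc according to whether the induced boundary orientations agree or disagree there. You merely package the paper's two-figure case analysis (Figures 11 and 12, plus the asserted symmetric only-if part) into the single uniform identity $\signt{B}=\epsilon_1\epsilon_2$, which is a cleaner presentation of the same computation and additionally covers the case $D_1=D_2$ explicitly.
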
 
\begin{proofX} 
To show the if-part, 
suppose the orientation of $\partial B, \partial D_1$ and $\partial D_2$ correspond to each other as in figure 11.
\begin{center}
\begin{picture}(120,40)
\put(20,20){\circle{40}}
\put(99,20){\circle{40}}
\qbezier(39,27)(40,27)(80,27)
\qbezier(39,13)(40,13)(80,13)
\qbezier(60,27)(60,27)(63,29)
\qbezier(60,27)(60,27)(63,25)
\qbezier(63,13)(63,13)(60,15)
\qbezier(63,13)(63,13)(60,11)
\qbezier(0,20)(0,20)(-2,17)
\qbezier(0,20)(0,20)(2,17)
\qbezier(119,20)(119,20)(117,23)
\qbezier(119,20)(119,20)(121,23)
\put(10,20){{\footnotesize $D_1$}}
\put(95,10){{\footnotesize $D_2$}}
\put(50,17){{\footnotesize $B$}}
\end{picture}
\hspace*{30pt}
\begin{picture}(120,40)
\put(20,20){\circle{40}}
\put(99,20){\circle{40}}
\qbezier(39,27)(40,27)(80,27)
\qbezier(39,13)(40,13)(80,13)
\qbezier(60,27)(60,27)(63,29)
\qbezier(60,27)(60,27)(63,25)
\qbezier(63,13)(63,13)(60,15)
\qbezier(63,13)(63,13)(60,11)
\qbezier(0,20)(0,20)(-2,23)
\qbezier(0,20)(0,20)(2,23)
\qbezier(119,20)(119,20)(117,17)
\qbezier(119,20)(119,20)(121,17)
\put(10,20){{\footnotesize $D_1$}}
\put(95,10){{\footnotesize $D_2$}}
\put(50,17){{\footnotesize $B$}}
\end{picture}
\\[1ex] Figure 11 \hspace*{100pt} Figure 12
\end{center}
\noindent We conclude $\orient{D_2} = - \orient{B} =-\left( - \orient{D_1}\right) = \orient{D_1}$.
Now suppose the orientation of $\partial B, \partial D_1$ and $\partial D_2$ do not correspond to each other as in figure 12.
Then $\orient{D_2} = \orient{B} = \orient{D_1}$, i.e. ~$\hspace{-10pt}$ $\signt{B}=1$ in both  cases. 

We show the only-if-part in the same way  assuming the negation of the statement 
about the induced orientations in the proposition and conclude $\signt{B}=-1$.
\newline
\end{proofX}
\begin{definition}
A {\sf system of oriented disks (sod)} consists of the following data: Let $\cal{D}$ be a finite collection of oriented disks. 
Every disk $D\in \cal{D}$ comes with distinct points  $n_1,\ldots,n_k$ on $\partial D$ along the orientation of  
$\partial D$, where  $n_j\in \mathbb{Z}\setminus \{0\}$, see figure 13.
\begin{center}
\begin{picture}(40,40)
\qbezier(20,0)(28.29,0)(34.85,5.85)
\qbezier(34.85,5.85)(40,11.71)(40,20)
\qbezier(40,20)(40,28.29)(34.85,34.85)
\qbezier(34.85,34.85)(28.29,40)(20,40)
\qbezier[10](20,0)(11.71,0)(5.85,5.85)
\qbezier[10](5.85,5.85)(0,11.71)(0,20)
\qbezier(0,20)(0,28.29)(5.85,34.85)
\qbezier(5.85,34.85)(11.71,40)(20,40)
\put(20,40){\circle*{3}}
\put(34,35.5){\circle*{3}}
\put(27,1){\circle*{3}}
\put(3,30){\circle*{3}}
\put(18,45){\footnotesize $n_1$}
\put(36,39){\footnotesize $n_2$}
\put(26,-8){\footnotesize $n_3$}
\put(-12,30){\footnotesize $n_k$}
\put(13,13){\footnotesize $D$}
\qbezier(40,20)(40,20)(43,22)
\qbezier(40,20)(40,20)(37,22)
\end{picture}\\[1.5ex] Figure 13
\end{center}
In addition any number   appears exactly twice in $\cal{D}$.
A {\sf disk/band surface of a sod} is constructed by connecting each pair of equal numbers   by a band 
$B$ with $\signt{B}$ is the sign of the number.
\end{definition} 
\begin{proposition}\label{surface iso}
Let $\left(S,D\right)$ an {\agd}, $g : S\rightarrow \R^3$ an embedding, $p : \R^3 \rightarrow \R^2$ a regular projection for $g(S)$, 
$f:=p\circ g$ and $\mbox{pr} : \R^3\rightarrow \R^2$, $(x,y,z)\mapsto (x,y)$ the standard projection. Moreover let $E$ be the 
{\tgd} coming from the image of $S$ under $f$ and $\phi\left(E\right)=\left(S',D'\right)$ for some choice of $S'\subset\R^3$ according 
to the definition of $\phi$. 
Then there is an embedding $f_\phi : S \rightarrow \R^3$ such that $f_\phi\left(S\right)=S'$ and 
$$
\begin{diagram}
\node{S} \arrow{e,t}{g} \arrow{s,l}{f_\phi} \arrow{se,l}{f} 
\node{\R^3}\arrow{s,r}{p}\\
\node{\R^3} \arrow{e,b}{pr} \node{\R^2}
\end{diagram}
$$
is a commutative diagram. 
\end{proposition}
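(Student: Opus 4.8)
The plan is to build $f_\phi$ directly as a lift of $f$ onto the given surface $S'$, and to justify that this lift exists by comparing, feature by feature, the local model that $\phi$ glues in to form $S'$ with the local form that $g(S)$ is forced to have near a regular projection (as recalled from \cite{yasuhara:96}).

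First I would record what the two sides look like locally. Put $V:=f(S)=p\bigl(g(S)\bigr)\subset\R^2$, the planar ``shadow''. Since $p$ is regular for $g(S)$, over each $y\in V$ the fibre $p^{-1}(y)\cap g(S)$ has one, two or infinitely many points; in the last two cases $V$ is near $y$ exactly the transversal double point of two band images of \ref{cond2} (Figure 4) or the fold of \ref{cond3} (Figure 5), with two band parts of $S$ respectively one band part of $S$ involved; and near a crossing or a vertex of $D$ the disk of $S$ containing it projects injectively, so $V$ is there just a disk carrying the crossing or vertex. On the other side, recall from Figure 3 that $\phi$ produces $S'$ by replacing each classical crossing, each virtual crossing, each vertex, each bar and each connecting edge of $E$ by a standard piece of surface sitting in $\R^3$ over the corresponding planar picture; each such piece already projects under $pr$ onto that very picture, so $pr(S')=V$, and near each feature $S'$ has precisely one of the local forms just listed, under the dictionary: classical crossing $\leftrightarrow$ crossing of $D$; vertex $\leftrightarrow$ vertex of $D$; virtual crossing $\leftrightarrow$ Figure 4 double point; bar $\leftrightarrow$ Figure 5 fold (this dictionary is exactly how $E$ was defined from the shadow of $g(S)$).

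Next I would define $f_\phi\colon S\to\R^3$ as the lift of $f$ onto $S'$ piece by piece: over the part of $V$ with one-point fibres, set $s\mapsto\bigl(f(s),0\bigr)$; over each Figure 4 region send the two local band parts of $S$ to the two crossing bands of the model of item~\ref{virtual}, with heights chosen to match whatever over/under convention was used in the given $S'$; over each Figure 5 region send the local band part of $S$ onto the folded (twisted) band of the model of item~\ref{bar}. By the first step these pieces cover $S$ and overlap only along arcs of the underlying diagram, where the definitions agree, so $f_\phi$ is a well-defined continuous map; because $S'$ is assembled from exactly these matching pieces, $f_\phi$ is a homeomorphism of $S$ onto $S'$; and since $pr\circ f_\phi=f$ on each piece, the square commutes, i.e. $pr\circ f_\phi=f=p\circ g$. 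Injectivity of $f_\phi$ is automatic away from the Figure 4 and Figure 5 regions (there $f$ itself is injective), and on those regions it holds because the heights were copied from the embedded local models of $\phi$.

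The crossing, vertex and plain-edge pieces require nothing, since the surface pieces $\phi$ glues in project correctly by construction. The point needing genuine care, and where I expect the bulk of the work, is the bar/Figure~5 case: one must verify that the standard projection of the twisted-band model of item~\ref{bar} really is the Figure~5 singularity of a regular projection of a disk/band surface, so that the local band part of $S$ can be laid onto it, and that the freedom in $\phi$ — the unspecified over/under at virtual crossings and the unspecified sign of twist at a bar, which by the remark following Figure~3 change $S'$ only up to homeomorphism — does not obstruct the lift. It does not: each admissible local model is an embedded surface lying over the same Figure~4 or Figure~5 shadow, and $f$ restricted to the corresponding part of $S$ can be lifted onto any of them.
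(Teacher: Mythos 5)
Your argument is correct in outline, but it takes a genuinely different route from the paper's. The paper proceeds combinatorially: it encodes both $S$ and $S'$ as disk/band surfaces of systems of oriented disks, uses the orientation\mbox{-}preserving homeomorphisms $f|_D$ onto the disks of $S'$ to identify the two sods, and invokes Proposition \ref{X} to conclude that corresponding bands carry the same sign; homeomorphic sods with equal band signs yield homeomorphic surfaces, so $f_\phi$ exists, and commutativity of the square is then simply read off from the definition of $\phi$. You instead build $f_\phi$ directly as a fibrewise lift of $f=p\circ g$ onto $S'$, matching the local singularity types of the regular projection (the Figure~4 double point and the Figure~5 fold) with the local models that $\phi$ inserts (virtual crossing and bar). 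Your approach buys an honest proof of the commutativity, which the paper only asserts, and it dispenses with the sign bookkeeping entirely, since half\mbox{-}twists are matched fold by fold rather than counted modulo~2; your remark that the over/under and twist\mbox{-}sign choices in $\phi$ do not obstruct the lift (because the square only constrains $f_\phi(s)$ to lie over $f(s)$, not at the height of $g(s)$) is exactly the right point. What your route costs is a tacit identification of the two shadows: $pr(S')$ coincides with $f(S)$ only after a planar isotopy placing the local models exactly over the singular picture of $p(g(S))$, so you should state that $S'$ is chosen (as the wording ``for some choice of $S'$'' permits) to realize this, since otherwise $pr\circ f_\phi=f$ holds only up to such an isotopy --- a gap the paper shares. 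Finally, note that the paper's sod/sign formalism is reused in the proof of Proposition \ref{B} to compare two different projections, so replacing this proof by yours would require carrying the band\mbox{-}sign comparison out separately there.
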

\begin{proof}
First, choose an orientation of the disks and the bands of $S$. Then there is an sod, such that $S$ is a disk/band surface of that sod. Via the 
orientation-preserving  homeomorphisms $f|_D : D \rightarrow f(D) \subset \R^2$ for every disk $D$ of $S$, we get  another sod  consisting  
of the disks $f(D)$ of the surface $S'$. The pairs of numbers on the boundaries of the disks define a one-to-one correspondence between the 
bands of $S$ and $S'$. It follows from Proposition \ref{X}, that those  corresponding bands have the same sign, as  $f$ preserves  the orientation  
of the boundaries of the disks and the bands. Therefore $S$ and $S'$ have to be homeomorphic, as they are  disk/band surfaces of  homeomorphic 
sod with the same signs on the bands. We conclude that $S'$ is an image of an embedding $f_\phi$ of $S$ into $\R^3$. 
From the definition of  $\phi$ it follows that the diagram comutes.
\end{proof}

\noindent To show, that $\psi$ is well-defined, we have 
\begin{proposition}\label{B}
Let $(S,D)$ be an {\agd}, $g, g' : S\rightarrow \mathbb{R}^3$ embeddings, $p, p' : \R^3\rightarrow \R^2$ regular projections for $g(S)$ resp.
$g'\left(S\right)$, $f:=p\circ g$ and $f':=p'\circ g'$.  Let $E$, $E'$ be the 
{\tgd s} coming from the image of $S$ under $f$ resp. $f'$. Then $E$ is equivalent to $E'$ in TG.
\end{proposition}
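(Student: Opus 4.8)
The plan is to handle the bars last: first forget them, show that $E$ and $E'$ then have the \emph{same Gauss data} in the sense of the definition preceding Proposition~\ref{gauss}, conclude equivalence of the underlying virtual graph diagrams in VG, and only afterwards account for the bars by means of the twisted moves T1--T4. (One cannot simply invoke injectivity of $\Phi$ here, as that is not yet available, so I would argue directly.)

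For the set-up, note that $E$ and $E'$ are twisted graph diagrams whose underlying graph is the graph underlying $D$. The regular neighborhoods of the crossings and vertices used in forming $E$ (resp. $E'$) are the images under $f$ (resp. $f'$) of the disks of $S$, each carrying the same restriction of $D$; after an ambient isotopy of $\R^2$ I may therefore assume that $E$ and $E'$ have the same neighborhoods $N_1,\dots,N_m$ and agree on them. What is left outside $\bigcup N_i$ is, for each of $E$ and $E'$, a union of immersed arcs arising from the projections of the bands of $S$. Since the bands of $S$ are one and the same family on both sides, there is a tautological $1$-$1$ correspondence $W_E\leftrightarrow W_{E'}$ respecting the boundary points in $\bigcup N_i$. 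Hence, with bars ignored, $E$ and $E'$ have the same Gauss data, and by Proposition~\ref{gauss} their underlying virtual graph diagrams are equivalent in VG; the forbidden move VI$^*$ is not needed.

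For the bars: on each arc of $W_E$ sit finitely many bars; by repeated use of T2 I reduce to at most one bar per arc, and by T1, T3 and T4 I slide the remaining bar (if any) into a fixed small sub-arc next to one of its endpoints, and similarly for $W_{E'}$. The number of bars originally on an arc of $W_E$, read modulo $2$, is the sign of the corresponding band of $S$: a bar becomes a half-twist of a band by the definition of $\phi$ in \ref{bar}. of figure~3, and by Proposition~\ref{surface iso} the disk/band surface $\phi(E)$ is homeomorphic to $S$ with bands corresponding and signs preserved. The same holds for $E'$. As it is the same band of $S$ in both cases, corresponding arcs of $W_E$ and $W_{E'}$ carry equally many bars modulo $2$, so after the normalization above $E$ and $E'$ already agree in a neighborhood of every bar.

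I would finish by rerunning the argument of Proposition~\ref{gauss}, now at the level of twisted graph diagrams: the virtual Reidemeister moves that carry the arcs of $W_E$ onto those of $W_{E'}$ can be chosen with support away from the fixed sub-arcs holding the bars, hence they are legitimate moves of twisted graph diagrams and yield $E\sim E'$ in TG. The step I expect to be most delicate is exactly this last one — checking that Proposition~\ref{gauss} applies ``rel bars'', i.e. that the normalized bars can be kept inside prescribed disjoint sub-arcs while the remainder of the arcs is untangled; this requires setting up the correspondence $W_E\leftrightarrow W_{E'}$ near $\bigcup N_i$ so that the bar-carrying sub-arcs are matched before the untangling begins, after which each local virtual move is performed in the complement of those sub-arcs.
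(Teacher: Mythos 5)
There is a genuine gap, and it sits exactly where the paper's proof does its main work. You assert that ``after an ambient isotopy of $\R^2$ I may therefore assume that $E$ and $E'$ have the same neighborhoods $N_1,\dots,N_m$ and agree on them.'' This fails whenever some disk $D$ of $S$ is projected with opposite orientations by $f$ and $f'$: the local pictures of $E$ and $E'$ in the corresponding neighborhood are then mirror images of one another (same over/under data, but reversed cyclic order of the boundary points of the arcs on $\partial N_i$), and no orientation-preserving ambient isotopy of $\R^2$ can make them equal. The same flipping breaks your bar count as well: for a band of $S$ joining disks $D_1$ and $D_2$, if $f$ projects both disks ``face up'' while $f'$ flips exactly one of them, the corresponding arcs of $E$ and $E'$ carry numbers of bars of opposite parity, contradicting your claim that ``corresponding arcs of $W_E$ and $W_{E'}$ carry equally many bars modulo $2$.'' (The sign of a band in Proposition \ref{X} is computed relative to chosen orientations of the incident disks, and the orientations induced by $f|_D$ and $f'|_D$ differ precisely at the flipped disks, so Proposition \ref{surface iso} does not give you matching twist parities directly.)

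The paper repairs both defects at once before doing anything else: it applies a T3-move (at a disk containing a crossing) or a T4-move (at a disk containing a vertex) to $E'$ at every disk whose image under $f'$ has the orientation opposite to its image under $f$, obtaining an intermediate diagram $C\sim E'$. These moves reverse the local pictures and simultaneously add the compensating bars on the incident arcs; after this correction all disks match in orientation, $E$ and $C$ have the same Gauss data, and the band-sign argument via Propositions \ref{surface iso} and \ref{X} legitimately yields equal bar counts modulo $2$. Your remaining steps --- Proposition \ref{gauss} for the underlying virtual diagrams, and T1, T2 (together with the other moves) to normalize and cancel bars --- then proceed essentially as in the paper. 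So the architecture of your argument is the right one, but without the T3/T4 correction the two central claims on which it rests are false as stated.
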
 
\begin{proofX}
First, choose an orientation of the disks and the bands of $S$.
Suppose  $D$ is a disk of $S$ such that $f'(D)\subset \R^2$ has the opposite orientation of $f(D)\subset \R^2$. 
Depending on whether there is a real crossing or a vertex inside the disk we get a diagram $C$ equivalent to $E'$ by performing a T3- resp. a T4-move 
at $E'$ for {\it all} such disks. As in the proof of proposition \ref{Phi}, there is a homeomorphism $H : \R^3 \rightarrow \R^3$ coming from 
rotating that disks around $2\pi$ such 
that $H\left(\phi\left(E'\right)\right)=\phi(C)$. 
(To keep the notation short, by $\phi(\cdot)$ we mean only the surface-part of the {\agd}.)
As a result the disks $f(D)\subset\R^2$ of $\phi(E)$ and the disks $H\circ f'(D)\subset \R^2$ of $\phi(C)$ have 
the same orientation. Moreover, the diagrams  $E$ and $C$ have the same Gauss data. With $f_\phi$ and $f'_\phi$ being the embeddings introduced 
in proposition \ref{surface iso}, the composition
$$
\begin{diagram}
\node{h: \phi(E)} \node{S} \arrow{w,tb}{f_\phi}{\approx} \arrow{e,tb}{f'_\phi}{\approx} \node{\phi\left(E'\right)} \arrow{e,tb}{H}{\approx} \node{\phi(C)}
\end{diagram}
$$
maps the disks and bands  of $\phi(E)$ to the disks and bands of $\phi(C)$. It follows from Proposition \ref{X}, that the bands mapped onto each other
via $h$
have the same sign, because the disks have the same orientation. 
In the sense of definition \ref{sign of band}, i.e. moving an orientation  along the band, those bands must have the same number of twists modulo 2. 
Therefore  the corresponding arcs  of the diagrams  $E$ and $C$ have the same number of bars modulo 2. Combining this with Proposition 
\ref{gauss} and the twisted Reidemeister moves we see that $E\sim C$, thus $E\sim E'$.
\end{proofX}

\begin{proposition}\label{well Psi}
$\Psi$ is well-defined.
\end{proposition}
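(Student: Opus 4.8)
The plan is to split the claim into two parts. First, $\psi$ should be well-defined already as a map on $\cal{AG}$, i.e. $\psi(S,D)$ must not depend on the embedding $g$ or the regular projection $p$ chosen in its definition; this is exactly Proposition \ref{B}. Second, $\psi$ should be constant on abstract Reidemeister equivalence classes; since two equivalent {\agd s} are joined by a finite chain of abstract Reidemeister moves, by transitivity of equality in TG it suffices to show $\psi(S,D)=\psi(S',D')$ whenever $(S,D)$ is obtained from $(S',D')$ by a \emph{single} abstract Reidemeister move of some type $j\in\{\mathrm{I},\dots,\mathrm{VI}\}$. Granting both, $\Psi([(S,D)]):=\psi(S,D)$ is well-defined, so the entire content is this last assertion.

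To prove it, I would start from the definition: there are a closed surface $F$ and embeddings $e:S\to F$, $e':S'\to F$ such that $e(D)$ is obtained from $e'(D')$ by a Reidemeister move of type $j$ inside a disk $\Delta\subset F$. Since $e(S)$ and $e'(S')$ are regular neighborhoods in $F$ of diagrams that agree on $F$ outside $\Delta$, one can choose a slightly larger disk $\Delta'\supset\Delta$ on whose complement $e(S)$ and $e'(S')$ coincide. Deleting from $F$ a small open disk disjoint from $e(S)\cup e'(S')$ yields a compact surface with boundary, which embeds in $\R^3$; I would fix such an embedding on the part outside $\Delta'$ (which carries the common portion of $S$ and $S'$) and then extend it over the $\Delta'$--region in two ways, one realizing the local neighborhood of $S$ and one that of $S'$. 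This produces embeddings $g:S\to\R^3$ and $g':S'\to\R^3$ that literally agree outside the image of $\Delta'$. Because the local pictures sit inside the disk $\Delta$, the subsurfaces of $S$ and $S'$ lying in $\Delta'$ are planar, hence orientable, so all their bands have sign $+1$; the two extensions can therefore be taken flat there, and a regular projection can be chosen that agrees with a fixed one outside the image of $\Delta'$ and has only singularities of the type of figure 4 (none of figure 5) inside it.

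With these choices, the twisted graph diagrams $E$ coming from $g(S)$ and $E'$ coming from $g'(S')$ carry the same bars (all coming from the common, outer part), have the same Gauss data outside the image of $\Delta'$, and differ inside by exactly a classical Reidemeister move of type $j$. Hence $E\sim E'$ in TG: outside the move region the two diagrams are related by virtual Reidemeister moves via Proposition \ref{gauss}, and inside by the Reidemeister move of type $j$, which is one of the generators of TG; no twisted moves are needed. By Proposition \ref{B} the class $[E]$ does not depend on these particular choices, so $\psi(S,D)=[E]=[E']=\psi(S',D')$, as required. The moves $\mathrm{IV}$, $\mathrm{V}$, $\mathrm{VI}$, which involve vertices, are handled in the same way, replacing the vertex neighborhood as in figure 3.

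I expect the main obstacle to be the construction of the two \emph{compatible} embeddings of $S$ and $S'$ together with matching regular projections: one must make them agree outside a genuine disk of $\R^2$, avoid introducing any new bars when extending across $\Delta'$ (this is where the orientability of the part inside $\Delta$, and hence Proposition \ref{X}, is used), and arrange that the only visible change there is the classical move of type $j$. This is the same kind of explicit disk/band surface surgery carried out in the proof of Proposition \ref{Phi} (compare figures 6--9), run in the reverse direction, so it should be routine though somewhat delicate.
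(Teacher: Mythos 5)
Your proposal is correct and follows essentially the same route as the paper: reduce to a single abstract Reidemeister move, build compatible embeddings of $S$ and $S'$ into $\R^3$ (via the common closed surface $F$) that agree outside the move disk together with a matching regular projection, so that the resulting twisted graph diagrams differ by exactly that Reidemeister move, and then invoke Proposition \ref{B} for independence of the choices. The paper phrases this by replacing $S,S'$ with regular neighborhoods $N,N'$ in $F$ with $N\setminus\Sigma=N'\setminus\Sigma$ and extending one embedding $g$ of $N$ to an embedding $h$ of $N\cup\Sigma=N'\cup\Sigma$, but the substance is the same.
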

\begin{proofX}
We have to show $\Psi\left(\left[(S,D)\right]\right)=\Psi\left(\left[\left(S',D'\right)\right]\right)$ for equivalent abstract graph diagrams $(S,D)$ and $\left(S',D'\right)$. 
Assume that  $(S,D)$ and $\left(S',D'\right)$ differ by an abstract Reidemeister move. Then there are embeddings
$f : S\rightarrow F$ and $f' : S'\rightarrow F$ into a closed surface $F$, such that $f(D)$ and $f'\left(D'\right)$ 
differ by a Reidemeister move inside a disk $\Sigma \subset F$ of type I, II, III, IV, V or VI. Outside the disk the 
diagrams are identical, i.e. $f(D) \cap \overline{F\setminus \Sigma} = f'\left(D'\right)\cap\overline{F\setminus \Sigma}$.
Hence we may choose regular neighborhoods $N$ and $N'$ of $f(D)$ resp. $f'\left(D'\right)$ satisfying $N\approx f(S) \approx S$,
$N'\approx f'\left(S'\right)\approx S'$, $N\setminus \Sigma=N'\setminus \Sigma$ and $N\cup \Sigma=N'\cup \Sigma$. 
Applying $\psi$ to the abstract graph diagram $\left(N,f(D)\right)$ we get an  embedding $g:N\rightarrow \R^3$ and a regular projection $p$  for $g(N)$.
As $N$ and $N'$ are equal outside $\Sigma$ it is easy to construct an embedding $h : N\cup\Sigma=
N'\cup\Sigma \rightarrow \R^3$ and a projection $\tilde{p}$ regular for $h\left(N\cup\Sigma\right)$ 
with $h$ equal to $g$ when restricted to $N$, such that the {\tgd} $E$ belonging to $p\circ g\left(N\right)$ resp. $E'$ coming from
\newcommand{\h}{h\raisebox{-3pt}{$\mid \!\! N'$}}
$\tilde{p}\circ h\raisebox{-3pt}{$\mid \!\! N'$}\left(N'\right)$ differ by the same Reidemeister move mentioned above. Therefore we calculate
\begin{eqnarray*}
\Psi\left(\left[\left(S,D\right)\right]\right)&=&\Psi\left(\left[\left(N,f(D)\right)\right]\right) =\psi\left(N,f(D)\right)
	=\left[E\right]=\left[E'\right]\\
	&=&\psi\left(N',f'\left(D'\right)\right)=\Psi\left(\left[\left(N',f'\left(D'\right)\right)\right]\right)
	=\Psi\left(\left[\left(S',D'\right)\right]\right).
\end{eqnarray*}
\end{proofX}
 
\begin{proofofmaintheorem}
$\Phi$ injective: Let $D', E' \in \cal{TG}$, $\phi\left(D'\right)=\left(F_D,D\right)$, $\phi\left(E'\right)=\left(F_E, E\right)$ and  
\begin{eqnarray} 
\Phi\left(\left[D'\right]\right) &=& \Phi\left(\left[E'\right]\right). \label{voraus}
\end{eqnarray}
 The projection $pr : \R^3\rightarrow \R^2$, $(x,y,z)\mapsto (x,y)$ is regular for $F_E$ and $F_D$,  and $pr(D)=D'$, $pr(E)=E'$ by the definition 
of $\phi$. This implies $\left[D'\right]=\psi\left(\left(F_D,D\right)\right)$ and $\left[E'\right]=\psi\left(\left(F_E,E\right)\right)$. Thus
$$\left[D'\right]=\Psi\left(\left[\left(F_D,D\right)\right]\right)\stackrel{(\ref{voraus})}{=} \Psi\left(\left[\left(F_E,E\right)\right]\right)=\left[E'\right]$$
as $\Psi$ is well-defined.

To show that $\Phi$ is surjective let $(S,D) \in \cal{AG}$ and $\left[E\right]:=\Psi\left(\left[(S,D)\right]\right)$. Then $E$ is constructed via an embedding 
$g:S\rightarrow \R^3$ and a regular projection $p$ for $g(S)$.  Because of proposition \ref{surface iso}, the disk/band surface of the {\agd} 
$\phi\left(E\right)$ is homeomorphic to $S$. As the over/under informations of $D$ on $S$ correspond to those of  $E$, we get 
$\phi(E) \approx\left(S,D\right)$ and from that $\Phi\left([E]\right)=\left[\phi(E)\right]=\left[(S,D)\right]$. 
\end{proofofmaintheorem}

\section{Pure Twisted Graph Diagrams}\label{pure tgd}
\begin{definition}\cite{miya06}
A {\tgd} $E$ is called {\sf pure} if it has only virtual crossings.
\end{definition}
\begin{definition}\cite{bollrior02}
Let $E$ be a {\tgd} without circle components (\cc), $\phi\left(E\right)=\left(S,D\right)$ the corresponding {\agd} and 
\begin{enumerate}
\item $k\left(S\right) :=$ \# connected components of $S$,
\item $n\left(S\right):=$ first betti-number of $S$,
\item $b\left(S\right):=$ \# boundary components of $S$,
\item $t\left(S\right):= 0$, if $S$ is orientable, otherwise $t\left(S\right):=1$.
\end{enumerate}
Define $M\left(\emptyset\right):=1$ and
$$M\left(E\right)\left(y,z,w\right):=(-1)^{k\left(S\right)}y^{n\left(S\right)}z^{k\left(S\right) -b\left(S\right)+n\left(S\right)}w^{t\left(S\right)}$$
as a polynomial in $\mathbb{Z}\left[y,z,w\right]$ modulo $\left(w^2-w\right)$. Let $F$ be a {\tgd} possibly with \cc~For the number of \cc~having
an odd number of bars we write $o(F)$, for those with no or an even number of bars $e(F)$. Then define $Q\left(\emptyset\right):=1$ and
$$Q\left(F\right)\left(y,z,w\right):=\left(-1-y\right)^{e(F)}\left(-1-yzw\right)^{o(F)}\sum_{E\subset F}M(E).$$
Here by $E\subset F$ we mean a  twisted graph (sub-)diagram  $E$ (of $F$) belonging to a spanning subgraph of $F$ ignoring the \cc
\end{definition}
\begin{remark} 
The polynomial $M$ is that of \cite{bollrior02} for $X=0$. As $\left(S,D\right)$ is defined up to homeomorphism, so are $M$ and $Q$.
\end{remark}
\begin{remark}\label{Qinvariant}
From the previous section we know that Reidemeister moves I*, II*, III* and IV* do not change the {\agd s}. Hence $Q$ is invariant under those moves.
\end{remark}
\begin{remark}\label{QinvariantT}
From the previous section we know that Reidemeister moves T1, T2, T3 and T4 do not change the {\agd s}. Hence $Q$ is invariant under those moves as well.
\end{remark}
\begin{example}\label{exampleQ}
\begin{enumerate}
\item For a vertex we calculate $Q(\bullet)=M(\bullet)=-1$.
\item For a pure {\tgd} $F$ without \cc~we have $Q\left(F\right)= \sum_{E\subset F} M(E)$.
\item $Q\left(\Loop\right)=M\left(\bullet\right)+M\left(\Loop\right)=-1-y=Q\left(\Circ\right)$.
\item $Q\left(\loopbar\right)=M\left(\bullet\right)+M\left(\loopbar\right)=-1-yzw=Q\left(\circbar\right)$.
\end{enumerate}
\end{example}
\begin{definition}
Let $E$ be a {\tgd} looking like figure 14 inside a disk.  We call the {\tgd} $E/e$ the {\sf contraction of} $E$ {\sf along a twisted edge} $e$ and define it to 
be identical with $E$ outside the disk and to look  like figure 15 inside the disk. 
\begin{center}
\begin{picture}(73,73)
\qbezier[10](10.6,10.6)(0,21.21)(0,36.21)
\qbezier[10](0,36.21)(0,51.21)(10.6,61.82)
\qbezier[10](10.6,61.82)(21.21,72.42)(36.21,72.42)
\qbezier[10](36.21,72.42)(51.21,72.42)(61.82,61.82)
\qbezier[10](61.82,61.82)(72.42,51.21)(72.42,36.21)
\qbezier[10](72.42,36.21)(72.42,21.21)(61.82,10.6)
\qbezier[10](61.82,10.6)(51.21,0)(36.21,0)
\qbezier[10](36.21,0)(21.21,0)(10.6,10.6)
\put(21.21,36.21){\line(1,0){30}}
\put(51.21,36.21){\line(2,3){14}}
\put(51.21,36.21){\line(2,-3){14}}
\put(51.21,36.21){\line(3,2){19}}
{\linethickness{0.5pt}
\qbezier[3](65,38)(65,35)(62,30)
}
\put(51.21,36.21){\circle*{4}}
\put(21.21,36.21){\circle*{4}}
\put(21.21,36.21){\line(-2,3){14}}
\put(21.21,36.21){\line(-2,-3){14}}
\put(21.21,36.21){\line(-3,2){19}}
{\linethickness{0.5pt}
\qbezier[3](7.42,38)(7.42,35)(10.42,30)
}
\put(36.21,39.21){\line(0,-1){6}}
\end{picture}
\hspace*{80pt}
\begin{picture}(73,73)
\qbezier[10](10.6,10.6)(0,21.21)(0,36.21)
\qbezier[10](0,36.21)(0,51.21)(10.6,61.82)
\qbezier[10](10.6,61.82)(21.21,72.42)(36.21,72.42)
\qbezier[10](36.21,72.42)(51.21,72.42)(61.82,61.82)
\qbezier[10](61.82,61.82)(72.42,51.21)(72.42,36.21)
\qbezier[10](72.42,36.21)(72.42,21.21)(61.82,10.6)
\qbezier[10](61.82,10.6)(51.21,0)(36.21,0)
\qbezier[10](36.21,0)(21.21,0)(10.6,10.6)
\put(21.21,36.21){\circle*{4}}
\put(21.21,36.21){\line(-2,3){14}}
\put(21.21,36.21){\line(-2,-3){14}}
\put(21.21,36.21){\line(-3,2){19}}
{\linethickness{0.5pt}
\qbezier[3](7.42,38)(7.42,35)(10.42,30)
}
\qbezier[200](21.21,36,21)(36.21,72.42)(61.82,10.6)
\qbezier[200](21.21,36,21)(36.21,0)(61.82,61.82)
\qbezier[200](21.21,36,21)(45,4)(65.82,55.82)
{\linethickness{0.5pt}
\qbezier[4](36.21,43)(36.21,35)(36.21,29)
}
\qbezier(56.82,56.82)(56.82,56.82)(61.82,53.82)
\qbezier(58.82,46.82)(58.82,46.82)(63.82,43.82)
\qbezier(55.82,16.82)(55.82,16.82)(60.82,18.82)
\end{picture}\\[1ex]
\mbox{Figure 14}\hspace*{106pt}\mbox{Figure 15}
\end{center}
\end{definition}
\begin{remark}
The deletion $E-e$ is defined in the usual way no matter if $e$ has a bar or not. 
That is, we omit the edge $e$ in the diagram.
If $e$ has no bar, then the contraction $E/e$ is the usual one as well. 
 \end{remark}
\begin{remark}\label{method}
Contracting along an arbitrary edge is always possible, because with Reidemeister moves 
IV and  IV*   a situation like figure 14 can be obtained. 

\end{remark}
\begin{remark}\label{nobother}
By definition, contracting along a twisted edge is the same as contracting along an ordinary edge after performing a T4-move.
\end{remark}
\begin{remark}\label{mpol}
Note that even though the disk/band surfaces of the {\agd s} belonging to $E$ and $E/e$ are homeomorphic, the {\agd s} are not, because 
their diagrams are different. Nevertheless we have
$M\left(E\right)=M\left(E/e\right)$.
\end{remark}
\begin{definition}
A {\tgd} $E$ in $\R^2$ is {\sf split} into subdiagrams $E_1$ and $E_2$ if there is a simple close curve in $\R^2\setminus E$ seperating $\R^2$ into a disk 
$\Sigma$ and  $\R^2\setminus \Sigma$ containing $E_1$ resp.~$E_2$. We write $E=E_1\sqcup E_2$. 

If $E_1$ and $E_2$ share 
exactly one vertex $v$, $E$ is a union of $E_1$ and $E_2$ and there is a simple closed curve in $\left(\R^2\setminus E\right) \cup v$ meeting $v$ 
and seperating   $\R^2$ into a disk 
$\Sigma$ and  $\R^2\setminus \Sigma$ with $E_1\subset \Sigma$, $E_2\subset \left(\R^2\setminus\Sigma\right)\cup v$, we call $E$
a {\sf vertex connected sum} and name it $E=E_1\vee E_2$. 

An edge $e$ of $E$ is a {\sf cut-edge} if $E-e$ is a split diagram.
\end{definition}
\begin{proposition}\label{contrdel}
Let $E$ be a pure {\tgd} and $e$ a non-loop edge which is not a \cc~Then $Q\left(E\right) = Q\left(E/e\right) + Q\left(E-e\right)$.
\end{proposition}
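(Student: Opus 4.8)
The plan is to establish the deletion--contraction relation by comparing the defining sums for the three polynomials. Since $E$ is pure and has no circle components, Example~\ref{exampleQ} gives $Q(E)=\sum_{G\subset E}M(G)$, where $G$ ranges over twisted graph subdiagrams belonging to spanning subgraphs of $E$; similarly $Q(E/e)=\sum_{G\subset E/e}M(G)$ and $Q(E-e)=\sum_{G\subset E-e}M(G)$. The underlying graphs of $E/e$ and $E-e$ are exactly those obtained by contracting resp.\ deleting the edge $e$, so the classical fact that spanning subgraphs of $G$ split according to whether they contain $e$ is what drives the argument: spanning subgraphs of $E$ containing $e$ correspond bijectively to spanning subgraphs of $E/e$, and spanning subgraphs of $E$ not containing $e$ correspond bijectively to spanning subgraphs of $E-e$. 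The first step is therefore to use Remark~\ref{method} to bring a copy of $e$ into the standard position of figure~14 (using Reidemeister moves IV, IV$^*$, and if $e$ carries a bar, a T4-move via Remark~\ref{nobother}), so that the contraction $E/e$ is literally the local replacement of figures~14--15 and the bijection on spanning subdiagrams is geometrically transparent.

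Next I would split the sum $Q(E)=\sum_{G\subset E}M(G)$ into the two groups: those subdiagrams $G$ in which the edge $e$ is present and those in which it is absent. For the absent case the subdiagram $G$ is literally a subdiagram of $E-e$ (same local picture, $e$ simply omitted), so this part of the sum is exactly $\sum_{G\subset E-e}M(G)=Q(E-e)$. For the present case, I claim that the subdiagram $G$ of $E$ and the corresponding subdiagram $G/e$ of $E/e$ satisfy $M(G)=M(G/e)$: this is precisely Remark~\ref{mpol}, which asserts that contracting along an edge does not change $M$. Because $e$ is a non-loop edge and not a circle component, contracting $e$ does not create or destroy connected components inappropriately, and the disk/band surfaces of the two abstract graph diagrams are homeomorphic, hence carry the same $k,n,b,t$ and the same $M$-value. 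Summing over all such $G$ gives $\sum_{G\ni e}M(G)=\sum_{G'\subset E/e}M(G')=Q(E/e)$.

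Putting the two pieces together yields
$$
Q(E)=\sum_{G\subset E,\ e\notin G}M(G)+\sum_{G\subset E,\ e\in G}M(G)=Q(E-e)+Q(E/e),
$$
which is the asserted identity. One should also check the trivial boundary situations: if $E-e$ or $E/e$ happens to be empty the conventions $M(\emptyset)=1$, $Q(\emptyset)=1$ make the bookkeeping consistent, and since $E$ is pure there are no classical crossings whose over/under data could interfere with the local moves.

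The main obstacle I expect is not the combinatorial splitting of the sum — that is routine — but verifying carefully that the bijection between subdiagrams of $E$ containing $e$ and subdiagrams of $E/e$ is genuinely well-behaved at the level of \emph{diagrams} (not just underlying graphs), i.e.\ that after the figure~14/figure~15 normalisation every subdiagram on the left matches a unique subdiagram on the right with the same surface data, including the delicate point that the edge $e$ is a non-loop, non-circle-component edge so no component count is disturbed. This is exactly where Remarks~\ref{method}, \ref{nobother} and \ref{mpol} must be invoked in the right order; once those are in hand, the proof is a two-line rewriting of the sum.
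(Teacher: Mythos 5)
Your argument is essentially the paper's own proof: split the sum over spanning subdiagrams of $E$ according to whether they contain $e$, identify the subdiagrams not containing $e$ with those of $E-e$, and use Remark~\ref{mpol} ($M(F)=M(F/e)$) to identify the subdiagrams containing $e$ with those of $E/e$; the normalisation via Remarks~\ref{method} and \ref{nobother} is the same background the paper relies on.

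The one inaccuracy is in your opening step: you assume that $E$ has no circle components, but the proposition only requires that the \emph{edge} $e$ is not a circle component --- $E$ itself may well contain circle components, and then $Q(E)$ is not simply $\sum_{G\subset E}M(G)$ as in Example~\ref{exampleQ}; it carries the prefactor $(-1-y)^{e(E)}(-1-yzw)^{o(E)}$, and likewise for $E/e$ and $E-e$. The repair is exactly how the paper begins its proof: since $e$ is neither a loop nor a circle component, contracting or deleting it leaves the circle components (and their bar parities) untouched, so the exponents $e(\cdot)$ and $o(\cdot)$ agree for $E$, $E/e$ and $E-e$, the common prefactor factors out of all three expressions, and your splitting of the remaining sum then yields $Q(E)=Q(E/e)+Q(E-e)$ in full generality. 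With that one-line observation added, your proof coincides with the paper's.
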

\begin{proofX}
\newcommand{\ce}[1]{\left(-1-y\right)^{e\left(#1\right)}}\newcommand{\co}[1]{\left(-1-yzw\right)^{o\left(#1\right)}}
As $\ce E = \ce{E/e}=\ce{E-e}=:\alpha$ and $\co E = \co{E/e}=\co{E-e}=:\beta$ we calculate
\begin{eqnarray*}
Q\left(E\right) &=& \alpha\beta \sum_{F\subset E} M\left(F\right)=\alpha\beta\left[\sum_{\left\{F\subset E \mid e\notin F\right\}}M\left(F\right)+\sum_{\left\{F\subset E \mid e\in F\right\}}M\left(F\right)\right]\\
&\stackrel{\ref{mpol}}{=}& \alpha\beta\left[\sum_{F\subset E-e} M\left(F\right) + \sum_{F\subset E/e} M\left(F\right)\right]
= Q\left(E-e\right)+Q\left(E/e\right) .
\end{eqnarray*}
\end{proofX}

\begin{proposition}\label{split}
We obtain $Q\left(E_1 \sqcup E_2\right)=Q\left(E_1\right)Q\left(E_2\right)$ for pure {\tgd s} $E_1$ and $E_2$. 
\end{proposition}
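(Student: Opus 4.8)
The plan is to reduce the statement to the multiplicativity of $M$ under disjoint union. First I would record the elementary bookkeeping: in a split $E=E_1\sqcup E_2$ the \cc~of $E$ are distributed among $E_1$ and $E_2$, so $e(E)=e(E_1)+e(E_2)$ and $o(E)=o(E_1)+o(E_2)$; and a spanning subgraph $F$ of $E$ (ignoring the \cc) is exactly a disjoint union $F_1\sqcup F_2$ with $F_i\subset E_i$, the assignment $F\mapsto(F_1,F_2)$ being a bijection between the index sets of the two sums. Thus
$$Q\left(E_1\sqcup E_2\right)=\left(-1-y\right)^{e(E_1)+e(E_2)}\left(-1-yzw\right)^{o(E_1)+o(E_2)}\sum_{F_1\subset E_1}\sum_{F_2\subset E_2}M\left(F_1\sqcup F_2\right),$$
so that, granting $M\left(F_1\sqcup F_2\right)=M(F_1)M(F_2)$, the double sum factors and regrouping the scalar prefactors yields $Q(E_1)Q(E_2)$.

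The real content is therefore the claim $M\left(F_1\sqcup F_2\right)=M(F_1)M(F_2)$ in $\mathbb{Z}[y,z,w]$ modulo $\left(w^2-w\right)$. Since $\phi$ is defined locally --- a small disk glued in at each crossing and vertex, a band attached along each edge --- and $F_1$, $F_2$ occupy disjoint regions of the plane, their surfaces are disjoint: writing $\phi(F_i)=(S_i,D_i)$, we get $\phi\left(F_1\sqcup F_2\right)=\left(S_1\sqcup S_2,\,D_1\sqcup D_2\right)$ up to homeomorphism. Now $k$, $b$ and $n$ are all additive under disjoint union ($k\left(S_1\sqcup S_2\right)=k(S_1)+k(S_2)$, likewise for $b$, and for $n$ because $H_1\left(S_1\sqcup S_2\right)=H_1(S_1)\oplus H_1(S_2)$), so the factors $(-1)^{k(S)}$, $y^{n(S)}$ and $z^{k(S)-b(S)+n(S)}$ of $M$ are each multiplicative. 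The quantity $t$ is not additive --- $S_1\sqcup S_2$ is orientable precisely when both summands are --- but since $t(S_i)\in\{0,1\}$ the relation $w^2=w$ gives $w^{t(S_1)}w^{t(S_2)}=w^{t(S_1)+t(S_2)}=w^{t\left(S_1\sqcup S_2\right)}$ in the quotient ring. Hence $M\left(F_1\sqcup F_2\right)=M(F_1)M(F_2)$; the degenerate case $F_i=\emptyset$ is covered by the convention $M\left(\emptyset\right)=1$ together with $\emptyset\sqcup F=F$.

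I expect the only delicate point to be this last one, the interplay between non-orientability and the quotient by $\left(w^2-w\right)$; the rest is the additivity of standard topological invariants under disjoint union together with the combinatorics of how circle components and spanning subgraphs behave under a split.
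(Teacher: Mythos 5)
Your proof is correct and follows essentially the same route as the paper: the same bookkeeping for $e$ and $o$, the same bijection $\left\{F\subset E_1\sqcup E_2\right\}\leftrightarrow\left\{F_1\subset E_1\right\}\times\left\{F_2\subset E_2\right\\}$, and the same factorization of the sum. The only difference is that where the paper simply cites \cite{bollrior02} for the multiplicativity of $M$ under disjoint union, you prove it directly --- correctly identifying that the additivity of $k$, $b$, $n$ is immediate and that the non-additive invariant $t$ is handled precisely by the relation $w^2=w$ in the quotient ring.
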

\begin{proofX}
Before we proof the proposition we note that
\begin{equation}\label{first}
o\left(E_1\sqcup E_2\right)=o\left(E_1\right)+o\left(E_2\right), e\left(E_1\sqcup E_2\right)=e\left(E_1\right)+e\left(E_2\right) 
\end{equation}
for the \cc~, and there is a one-to-one correspondence between the sets
\begin{equation}\label{second}
\left\{F\subset E_1\sqcup E_2\right\} \longleftrightarrow \left\{F_1\subset E_1\right\} \times \left\{F_2\subset E_2\right\}.
\end{equation}
Moreover from \cite{bollrior02} we know that the proposition is true for the polynomial $M$. To abbreviate the notation let
$A:=-1-y$ and $B:=-1-yzw$ in the following calculation:
\begin{eqnarray*}
Q\left(E_1\right)Q\left(E_2\right) &=& \left[A^{e\left(E_1\right)}B^{o\left(E_1\right)}\sum_{F_1\subset E_1} M\left(F_1\right)\right]
							\left[A^{e\left(E_2\right)}B^{o\left(E_2\right)}\sum_{F_2\subset E_2} M\left(F_2\right)\right]\\
&\stackrel{\kref{first}}{=}& A^{e\left(E_1\sqcup E_2\right)}B^{o\left(E_1\sqcup E_2\right)}\sum_{F_1\subset E_1}\left[\sum_{F_2\subset E_2} M\left(F_2\right)\right] M\left(F_1\right)\\
&=&A^{e\left(E_1\sqcup E_2\right)}B^{o\left(E_1\sqcup E_2\right)}\sum_{F_1\times F_2 \subset \left\{F_1\subset E_1\right\} \times \left\{F_2\subset E_2\right\}} M\left(F_1\right)M\left(F_2\right)\\
&\stackrel{\kref{second}}{=}& A^{e\left(E_1\sqcup E_2\right)}B^{o\left(E_1\sqcup E_2\right)}\sum_{F\subset E_1 \sqcup E_2} M\left(F\right) \quad =\quad Q\left(E_1\sqcup E_2\right).
\end{eqnarray*}
\end{proofX}

\begin{proposition}\label{v-conn}
We have $Q\left(E_1 \vee E_2\right)=-Q\left(E_1\right)Q\left(E_2\right)$ for pure {\tgd s} $E_1$ and $E_2$. 
\end{proposition}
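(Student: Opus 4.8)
The plan is to mimic the proof of Proposition \ref{split}, tracking the combinatorial data of a vertex connected sum rather than a split. First I would record the analogues of \kref{first} and \kref{second}: since $E_1\vee E_2$ shares exactly one vertex $v$ with no bars or crossings at $v$, the connected components counted in $o$ and $e$ are unchanged, so $o\left(E_1\vee E_2\right)=o\left(E_1\right)+o\left(E_2\right)$ and $e\left(E_1\vee E_2\right)=e\left(E_1\right)+e\left(E_2\right)$. Likewise a spanning subdiagram $F$ of $E_1\vee E_2$ (ignoring \cc) is exactly the data of a spanning subdiagram $F_1$ of $E_1$ together with a spanning subdiagram $F_2$ of $E_2$, glued along $v$; this gives the bijection $\left\{F\subset E_1\vee E_2\right\}\longleftrightarrow\left\{F_1\subset E_1\right\}\times\left\{F_2\subset E_2\right\}$. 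So the prefactors $A^{e}B^{o}$ multiply just as before, and the whole question reduces to the behaviour of $\sum_E M(E)$, i.e. to proving the statement $\sum_{F\subset E_1\vee E_2}M(F)=-\left(\sum_{F_1\subset E_1}M(F_1)\right)\left(\sum_{F_2\subset E_2}M(F_2)\right)$, which in turn follows if $M\left(F_1\vee F_2\right)=-M(F_1)M(F_2)$ for the individual spanning subdiagrams.

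So the crux is the identity $M\left(F_1\vee F_2\right)=-M(F_1)M(F_2)$, which I would get from the behaviour of the disk/band surface under the $\phi$-construction. Write $\phi(F_i)=(S_i,D_i)$ and $\phi(F_1\vee F_2)=(S,D)$. Gluing two diagrams at a single vertex corresponds, on the surface side, to identifying the two disks around the shared vertex $v$ into one disk; the rest of the surface is the disjoint union of the band-parts of $S_1$ and $S_2$. Hence the connected-component count drops by one compared to the disjoint union: $k(S)=k(S_1)+k(S_2)-1$. The first Betti number is additive, $n(S)=n(S_1)+n(S_2)$, because identifying two disks (which are contractible and each meeting the rest of the surface in a way that keeps things connected locally) neither creates nor destroys independent cycles — more carefully, $S$ deformation retracts onto a wedge-type union of $S_1$ and $S_2$ at a point, so $H_1$ is the direct sum. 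Similarly the boundary-component count satisfies $b(S)=b(S_1)+b(S_2)-1$: the outer boundary circles of the two disks at $v$, which were separate before, get merged into one when the disks are amalgamated along an arc of their boundary (this is exactly how a vertex-connected sum of diagrams sits in the plane). Finally orientability is unaffected by gluing along a disk, so $t(S)=\max\left(t(S_1),t(S_2)\right)$ — and because $w^2=w$ modulo the relation, $w^{t(S_1)}w^{t(S_2)}=w^{\max(t(S_1),t(S_2))}=w^{t(S)}$.

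Putting these four bookkeeping identities into the definition of $M$ gives, with $k_i:=k(S_i)$ etc.,
$$M\left(F_1\vee F_2\right)=(-1)^{k_1+k_2-1}y^{n_1+n_2}z^{(k_1+k_2-1)-(b_1+b_2-1)+(n_1+n_2)}w^{t_1}w^{t_2},$$
and comparing with $M(F_1)M(F_2)=(-1)^{k_1+k_2}y^{n_1+n_2}z^{(k_1-b_1+n_1)+(k_2-b_2+n_2)}w^{t_1}w^{t_2}$ one sees the $z$-exponents agree (the $-1$ and $+1$ cancel) and only the sign differs by a factor $-1$, so $M\left(F_1\vee F_2\right)=-M(F_1)M(F_2)$. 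I would then finish exactly as in Proposition \ref{split}: set $A:=-1-y$, $B:=-1-yzw$, expand $Q\left(E_1\right)Q\left(E_2\right)=A^{e(E_1)}B^{o(E_1)}A^{e(E_2)}B^{o(E_2)}\left(\sum_{F_1\subset E_1}M(F_1)\right)\left(\sum_{F_2\subset E_2}M(F_2)\right)$, use the additivity of $o$ and $e$ to combine the prefactors into $A^{e(E_1\vee E_2)}B^{o(E_1\vee E_2)}$, use the product-of-sums $=$ sum-over-pairs rearrangement together with the bijection on spanning subdiagrams to turn the double sum into $\sum_{F\subset E_1\vee E_2}M(F_1)M(F_2)=-\sum_{F\subset E_1\vee E_2}M(F)$, and conclude $Q\left(E_1\right)Q\left(E_2\right)=-Q\left(E_1\vee E_2\right)$, i.e. $Q\left(E_1\vee E_2\right)=-Q\left(E_1\right)Q\left(E_2\right)$.

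The main obstacle I anticipate is the careful verification of the surface-invariant identities $k(S)=k_1+k_2-1$, $b(S)=b_1+b_2-1$, $n(S)=n_1+n_2$ — in particular getting the boundary count right, since whether two boundary circles merge into one (versus staying separate, which would change the answer) depends precisely on how the vertex-disks of a planar vertex-connected sum are amalgamated via $\phi$; one has to argue this is forced by the planarity/embedding data in the definition of $\vee$ and not an extra choice. Everything downstream of those four identities is the same routine manipulation used in Proposition \ref{split}.
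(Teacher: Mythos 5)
Your proof is correct and follows the same structure as the paper's: the same additivity of $o$ and $e$, the same bijection between spanning subdiagrams of $E_1\vee E_2$ and pairs of spanning subdiagrams, and the same final rearrangement, with everything reduced to the identity $M\left(F_1\vee F_2\right)=-M\left(F_1\right)M\left(F_2\right)$. The only difference is that the paper simply cites \cite{bollrior02} for that identity, whereas you verify it directly; your bookkeeping is right and is perhaps most cleanly justified by noting that the surface of $\phi\left(F_1\vee F_2\right)$ is obtained from those of $\phi\left(F_1\right)$ and $\phi\left(F_2\right)$ by gluing the two vertex disks along a free boundary arc, which makes $k$ and $b$ each drop by exactly one, keeps $n$ additive, and settles the orientability factor via $w^2=w$.
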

\begin{proofX}
First we note that there is a 1-1-correspondence between the sets
\begin{equation}\label{zwei}
\begin{diagram}
\node{\left\{F\subset E_1 \vee E_2\right\}} \arrow{e} \node{\left\{ F_1 \subset E_1\right\} \times \left\{ F_2 \subset E_2\right\}} \arrow{w}
\end{diagram}.
\end{equation}
For the number of \cc~we have
\begin{equation}\label{eins}
o\left(E_1\vee E_2\right)=o\left(E_1\right)+o\left(E_2\right), \quad e\left(E_1\vee E_2\right)=e\left(E_1\right)+e\left(E_2\right) .
\end{equation}
Moreover from \cite{bollrior02} we know that the proposition is true for the polynomial $M$. Using $A:=-1-y$ and $B:=-1-yzw$ we calculate
\begin{eqnarray*}
Q\left(E_1\right)Q\left(E_2\right)&=& \left[A^{e\left(E_1\right)}B^{o\left(E_1\right)}\sum_{F_1\subset E_1}M\left(F_1\right)\right]
					\left[A^{e\left(E_2\right)}B^{o\left(E_2\right)}\sum_{F_2\subset E_2}M\left(F_2\right)\right]\\
&\stackrel{\kref{eins}}{=}& A^{e\left(E_1\vee E_2\right)}B^{o\left(E_1\vee E_2\right)}  \sum_{F_1\subset E_1}\left[\sum_{F_2\subset E_2} M\left(F_2\right)\right]M\left(F_1\right)\\
&=& A^{e\left(E_1\vee E_2\right)}B^{o\left(E_1\vee E_2\right)}  \sum_{F_1\times F_2 \in \left\{ F_1 \subset E_1\right\} \times \left\{ F_2 \subset E_2\right\}}-M\left(F_1 \vee F_2\right)\\
&\stackrel{\kref{zwei}}{=}& A^{e\left(E_1\vee E_2\right)}B^{o\left(E_1\vee E_2\right)}  \sum_{F \subset E_1 \vee E_2} -M\left(F\right) \quad =\quad -Q\left(E_1\vee E_2\right).
\end{eqnarray*}
\newline
\end{proofX}
\begin{proposition}\label{cutedge}
If a pure {\tgd} $E$ has a cut edge $e$ then $Q\left(E\right)=0$.
\end{proposition}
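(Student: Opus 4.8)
The plan is to feed the cut-edge $e$ into the contraction-deletion formula of Proposition~\ref{contrdel} and then to recognise the two resulting diagrams: $E-e$ as a split diagram and $E/e$ as a vertex connected sum built from the \emph{same} two pieces. Propositions~\ref{split} and~\ref{v-conn} then produce the two contributions with opposite signs, so they cancel and $Q(E)=0$.

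In more detail, I would first note that a cut-edge is a non-loop edge which is not a circle component, so Proposition~\ref{contrdel} applies (it does not require $e$ to be untwisted) and yields $Q(E)=Q(E/e)+Q(E-e)$. Since $e$ is a cut-edge, $E-e$ is split; write $E-e=E_1\sqcup E_2$ and let $v_1,v_2$ be the endpoints of $e$, one lying in $E_1$ and one in $E_2$. As $E_1$ and $E_2$ are pure twisted graph diagrams, Proposition~\ref{split} gives $Q(E-e)=Q(E_1)Q(E_2)$. Contracting $e$ identifies $v_1$ with $v_2$ into a single vertex $v$, which is then the unique vertex common to the $E_1$- and $E_2$-parts; pushing the separating curve of $E-e$ through $v$ exhibits $E/e$, still a pure diagram, as the vertex connected sum $E_1\vee E_2$, so Proposition~\ref{v-conn} gives $Q(E/e)=-Q(E_1)Q(E_2)$. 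Adding the two equalities yields $Q(E)=Q(E/e)+Q(E-e)=-Q(E_1)Q(E_2)+Q(E_1)Q(E_2)=0$.

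The step I expect to need real care is the identification of $E/e$ with the vertex connected sum $E_1\vee E_2$. When $e$ carries no bar this is just the ordinary contraction, and the claim follows by choosing the separating curve of $E-e$ so that $e$ runs across it, that is, so that its two endpoints end up on opposite sides. When $e$ is twisted I would reduce to this case using Remark~\ref{nobother}, according to which contracting along a twisted edge is a T4-move followed by an ordinary contraction, together with Remark~\ref{QinvariantT} ($Q$ is unchanged by T4-moves), checking that the T4-move and the local picture of Figure~15 near $v$ leave the two pieces, and hence their $Q$-values, unchanged. Finally, the degenerate cases in which one of $E_1,E_2$ reduces to a single vertex $\bullet$ are harmless: since $Q(\bullet)=-1$ by Example~\ref{exampleQ}, both the split and the vertex-connected-sum identities still hold and the cancellation survives.
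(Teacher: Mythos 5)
Your proof is correct and follows exactly the paper's argument: apply the contraction--deletion formula to the cut-edge, identify $E-e=E_1\sqcup E_2$ and $E/e=E_1\vee E_2$, and let Propositions~\ref{split} and~\ref{v-conn} cancel the two terms, with Remark~\ref{nobother} handling a possible bar on $e$. The extra care you devote to the identification $E/e=E_1\vee E_2$ and the degenerate cases only makes explicit what the paper leaves implicit.
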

\begin{proof}
We may write $E - e=E_1 \sqcup E_2$ and $E/e = E_1\vee E_2$ for appropriate subdiagrams $E_1$ and $E_2$. 
Note that we need not bother if $e$ has a bar or not because of remark \ref{nobother}. Thus
\begin{eqnarray*}
Q\left(E\right) &\stackrel{\ref{contrdel}}{=}& Q\left(E- e\right) + Q\left(E/e\right) = 
				Q\left(E_1\sqcup E_2\right) + Q\left(E_1\vee E_2\right) \\
		&\stackrel{\ref{split}, \ref{v-conn}}{=}& Q\left(E_1\right)Q\left(E_2\right) -  Q\left(E_1\right)Q\left(E_2\right) = 0.
\end{eqnarray*}
\end{proof}
The next proposition shows that $Q$ is a topological invariant in the sense that it does not care about vertices of degree 2.
\begin{proposition}
Let $E$ be a pure {\tgd} looking like figure 16 inside a disk $\Sigma$ and 
$E'$ the pure {\tgd} being identical with $E$ outside and looking 
like figure 17 inside $\Sigma$. Then $Q\left(E\right)=Q\left(E'\right)$.					
\begin{center}
\begin{picture}(40,40)
\qbezier[10](20,0)(28.29,0)(34.85,5.85)
\qbezier[10](34.85,5.85)(40,11.71)(40,20)
\qbezier[10](40,20)(40,28.29)(34.85,34.85)
\qbezier[10](34.85,34.85)(28.29,40)(20,40)
\qbezier[10](20,0)(11.71,0)(5.85,5.85)
\qbezier[10](5.85,5.85)(0,11.71)(0,20)
\qbezier[10](0,20)(0,28.29)(5.85,34.85)
\qbezier[10](5.85,34.85)(11.71,40)(20,40)
\qbezier(0,20)(0,20)(40,20)
\put(20,20){\circle*{3}}
\end{picture}
\hspace*{60pt}
\begin{picture}(40,40)
\qbezier[10](20,0)(28.29,0)(34.85,5.85)
\qbezier[10](34.85,5.85)(40,11.71)(40,20)
\qbezier[10](40,20)(40,28.29)(34.85,34.85)
\qbezier[10](34.85,34.85)(28.29,40)(20,40)
\qbezier[10](20,0)(11.71,0)(5.85,5.85)
\qbezier[10](5.85,5.85)(0,11.71)(0,20)
\qbezier[10](0,20)(0,28.29)(5.85,34.85)
\qbezier[10](5.85,34.85)(11.71,40)(20,40)
\qbezier(0,20)(0,20)(40,20)
\end{picture}
\\[1ex]
\mbox{Figure 16}\hspace*{60pt}\mbox{Figure 17}
\end{center}
\end{proposition}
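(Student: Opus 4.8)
The plan is to reduce the identity to the structural properties of $Q$ established above by performing a contraction--deletion along one of the two edges incident to the bivalent vertex. Write $v$ for the bivalent vertex of $E$ lying inside $\Sigma$, and let $g_1,g_2$ be the two edge germs at $v$. I would first peel off a degenerate case: if $g_1$ and $g_2$ are germs of a single edge, then that edge is a loop at $v$, and since $v$ is bivalent this loop together with $v$ is a whole connected component $L=\Loop$ of $E$; passing to $E'$ then merely replaces $L$ by a circle component. Writing $E=L\sqcup R$ and $E'=\Circ\sqcup R$, Proposition \ref{split} together with $Q\left(\Loop\right)=-1-y=Q\left(\Circ\right)$ from Example \ref{exampleQ} gives at once $Q(E)=Q(L)Q(R)=Q(\Circ)Q(R)=Q(E')$.

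In the main case $g_1,g_2$ are germs of two distinct edges $e_1$ (joining $v$ to a vertex $a$) and $e_2$ (joining $v$ to a vertex $b$); since $v$ is bivalent and both of its germs are consumed by these two distinct edges, necessarily $a\neq v\neq b$. Then $e_1$ is a non-loop edge which is not a \cc, so Proposition \ref{contrdel} yields $Q(E)=Q(E/e_1)+Q(E-e_1)$. (Should $e_1$ carry a bar one first applies a T4-move and invokes Remarks \ref{nobother} and \ref{QinvariantT}; in the situation of figure 16 there is no bar, and the contraction is the ordinary one, obtained via Remark \ref{method}.) The key observation is that contracting $e_1$ identifies $v$ with $a$ and fuses $e_1\cup e_2$ into a single edge from $a$ to $b$ --- comparing with figure 17, this is exactly $E'$ (the subcase $a=b$ producing a loop at $a$ on both sides, so still matching). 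Hence $Q(E/e_1)=Q(E')$. For the other term, in $E-e_1$ the vertex $v$ has become univalent carrying only the pendant edge $e_2$, so $e_2$ is a cut edge of $E-e_1$: deleting it isolates $v$, and $(E-e_1)-e_2$ splits off the single vertex $v$ from the (non-empty) remainder, which still contains $a$. Proposition \ref{cutedge} then gives $Q(E-e_1)=0$, and therefore $Q(E)=Q(E')$.

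The part that requires care --- and really the only obstacle --- is the identification $E/e_1=E'$ at the level of diagrams: one must track how the edges formerly incident to $a$ and to $v$ sit at the merged vertex, handle the degenerate subcases ($a=b$, or $a$ or $b$ themselves bivalent), and make sure the loop case is correctly separated off at the start. Once this bookkeeping is granted, everything else is a direct appeal to Propositions \ref{contrdel}, \ref{cutedge} and \ref{split} and to Example \ref{exampleQ}.
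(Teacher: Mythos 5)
Your proposal is correct and follows essentially the same route as the paper: split into the case where the two segments belong to one edge (handled via $Q(\Loop)=Q(\Circ)$ and Proposition \ref{split}) and the case of two distinct edges (handled via Proposition \ref{contrdel}, with the deletion term vanishing by Proposition \ref{cutedge} and the contraction being equivalent to $E'$ since $E$ is pure). The extra bookkeeping you flag at the end is exactly what the paper also leaves implicit.
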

\begin{proofX}
If the two segments of figure 16 belong to the same edge, we use example \ref{exampleQ} together with proposition \ref{split} 
to show the assertion. Now suppose those segments belong to different edges $e$ and $f$. Then $f$ is a cut edge for $E-e$ and 
$E'$ is equivalent to $E/e$ as $E$ is pure. Using the above propositions we calculate $Q\left(E\right)= Q\left(E-e\right) +Q\left(E/e\right) = 0 +Q\left(E'\right)$.
\end{proofX}

\section{An Invariant for Twisted Graph Diagrams} 
\newcommand{\s}[1]{\mathcal{S}\left(#1\right)}
\begin{definition}
Let $E$ be a {\tgd}. For a crossing $c$ of $E$ we define the {\sf spin} of $c$ to be $1, -1$ or $0$ as shown in figure 18. The pure {\tgd} obtained by replacing each
crossing with a spin is called a {\sf state} of $E$. The set of states will be denoted by $\s E$.  For $S\in \s E$ put $\left\{E\mid S\right\}:=a^{p-q}$, 
where $p$ and $q$ are the numbers of crossings with spin $+1$ and resp.~$-1$ in $S$. Now define a polynomial
\begin{equation}\label{R}
R\left(E\right)\left(a,z,w\right):=\sum_{S\in \s E} \left\{E\mid S\right\}Q\left(S\right)\left(-a-2-a^{-1}, z,w\right).
\end{equation}
\begin{center}
\poscrossO \hspace*{30pt} \nilcrossO \hspace*{30pt} \infcrossO \hspace*{30pt} \vertexO \\[2ex]
$c \hspace*{50pt} +1 \hspace*{60pt} -1 \hspace*{60pt} 0$ \\Figure 18
\end{center}
\end{definition}
\begin{remark}
If $E$ is pure we have $R\left(E\right)\left(a,z,w\right)=Q\left(E\right)\left(-a-2-a^{-1},z,w\right)$.
\end{remark}
\begin{proposition}\label{Rcondel}
The contraction/deletion formula is valid for the polynomial $R$, i.e.~$R\left(\edge\right)=R\left(\vertex\right)+R\left(\delete\right)$.
\end{proposition}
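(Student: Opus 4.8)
The plan is to reduce the identity to a state-by-state application of the contraction/deletion law for $Q$ (Proposition~\ref{contrdel}). By Remark~\ref{method} we may assume, after Reidemeister moves of type IV and IV$^*$, that the edge $e$ lies inside a disk $\Sigma$ as in figure 14; then $\edge$, $\vertex=E/e$ and $\delete=E-e$ coincide outside $\Sigma$, and --- reading off figures 14, 15 and the definition of the deletion --- none of the three diagrams has a classical crossing inside $\Sigma$. Hence the three diagrams have the same set of classical crossings, all lying outside $\Sigma$, so $\s{\edge}=\s{\vertex}=\s{\delete}$ as sets; and since the weight $\left\{E\mid S\right\}=a^{p-q}$ is determined by the spins assigned to these common crossings, $\left\{\edge\mid S\right\}=\left\{\vertex\mid S\right\}=\left\{\delete\mid S\right\}$ for every state $S$. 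A possible bar on $e$ is harmless here, as the contraction $\vertex$ already incorporates it (cf.~Remark~\ref{nobother}).

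Next, fix a state $S$ and write $S(\edge)$, $S(\vertex)$, $S(\delete)$ for the pure {\tgd s} obtained by replacing each crossing with its spin. The spin substitution is local and takes place outside $\Sigma$, whereas contracting or deleting $e$ only alters the picture inside $\Sigma$; the two operations therefore commute, so $S(\vertex)=S(\edge)/e$ and $S(\delete)=S(\edge)-e$. In $S(\edge)$ the edge $e$ joins the two distinct vertices of figure 14, hence it is a non-loop edge and not a circle component, and Proposition~\ref{contrdel} applies to the pure diagram $S(\edge)$ and the edge $e$, giving $Q\left(S(\edge)\right)=Q\left(S(\vertex)\right)+Q\left(S(\delete)\right)$ in $\mathbb{Z}[y,z,w]$ modulo $(w^2-w)$; this identity survives the substitution $y\mapsto -a-2-a^{-1}$.

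It then remains to insert this into \kref{R} and reindex. Using $\s{\edge}=\s{\vertex}=\s{\delete}$ together with the equality of weights from the first step, $R\left(\edge\right)=\sum_{S\in\s{\edge}}\left\{\edge\mid S\right\}\left(Q\left(S(\vertex)\right)+Q\left(S(\delete)\right)\right)\left(-a-2-a^{-1},z,w\right)$, and splitting the sum into its two halves yields $R\left(\edge\right)=R\left(\vertex\right)+R\left(\delete\right)$. The argument is essentially a reindexing of the defining sum; the one substantive ingredient is Proposition~\ref{contrdel}, applied state by state, and the only point demanding care is the bookkeeping of the first paragraph --- that normalizing $e$ via Remark~\ref{method} really does make the crossings of $\edge$, $\vertex$ and $\delete$ match up, with $e$ staying a non-loop, non-circle-component edge in each state.
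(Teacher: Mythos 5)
Your proof is correct and follows essentially the same route as the paper's: observe that $\edge$, $\vertex$ and $\delete$ have the same classical crossings, hence the same states and the same weights $\left\{\,\cdot\mid S\right\}$, then apply Proposition~\ref{contrdel} to each (pure) state and reindex the defining sum; the paper's version is just a terser rendering of your third paragraph. One small caveat: your opening appeal to Remark~\ref{method} is unnecessary --- the proposition is already stated for the local configuration of figure~14 --- and it is better omitted, since justifying a ``we may assume'' via Reidemeister moves IV and IV$^*$ would require the invariance of $R$ under those moves, which is only established later (Propositions~\ref{invariance RM} and~\ref{RiVirt}) and partly rests on this very proposition.
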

\begin{proofX} First we note 
\begin{equation}\label{statecondel}
\left\{\edge\mid S\right\}=\left\{\delete\mid S\right\}=\left\{\vertex\mid S\right\}
\end{equation}
for any state $S$. Hence we calculate
\begin{eqnarray*}
R\left(\edge\right) &=& \sum_{S\in \s{\edgeU}} \left\{ \edge\mid S\right\} Q\left(S\right) \stackrel{\ref{contrdel}}{=} 
	\sum_{S\in \s{\edgeU}} \left\{ \edge\mid S\right\} \left[Q\left(\delete\right)+Q\left(\vertex\right)\right]\\
&\stackrel{\kref{statecondel}}{=}& \sum_{S\in \s{\deleteU}} \left\{\delete\mid S\right\} Q\left(\delete\right) +
					\sum_{S\in \s{\vertexU}} \left\{\vertex\mid S\right\} Q\left(\vertex\right)\\
&=& R\left(\delete\right) + R\left(\vertex\right).
\end{eqnarray*}
\end{proofX}

\begin{proposition}\label{Rskein}
$R\left(\poscross\right)=aR\left(\nilcross\right)+ a^{-1}R\left(\infcross\right)+R\left(\vertex\right)$.
\end{proposition}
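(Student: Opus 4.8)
The plan is to decompose the skein relation for $R$ through the definition \kref{R} of $R$ as a state sum, by exploiting the fact that the three diagrams $\poscross$, $\nilcross$, $\infcross$ differ only inside a single disk containing one crossing resp.\ one smoothing. First I would observe that there is a natural bijection between $\s{\poscross}$ and $\s{\poscrossU}\times\left(\s{\nilcrossU}\sqcup\s{\infcrossU}\sqcup\s{\vertexU}\right)$ — more precisely, a state $S$ of $\poscross$ is obtained by choosing a spin $\sigma\in\{+1,-1,0\}$ at the distinguished crossing $c$ (giving $\nilcross$, $\infcross$ or $\vertex$ inside the disk) and then a state of the remaining crossings, the latter being the same for all three choices since the diagrams agree outside the disk. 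So I would write, for any state $S$ with spin $\sigma$ at $c$ and underlying outer state $S'$,
\[
\left\{\poscross\mid S\right\}=a^{\delta(\sigma)}\left\{\nilcross\mid S'\right\},
\]
where $\delta(+1)=1$, $\delta(-1)=-1$, $\delta(0)=0$, and where $\left\{\nilcross\mid S'\right\}=\left\{\infcross\mid S'\right\}=\left\{\vertex\mid S'\right\}$ because the only difference between these diagrams (the local picture inside the disk) contributes no crossings to the state $S'$.

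Next I would split the state sum $R\left(\poscross\right)=\sum_{S\in\s{\poscrossU}}\left\{\poscross\mid S\right\}Q\left(S\right)\left(-a-2-a^{-1},z,w\right)$ according to the spin $\sigma$ at $c$ into three groups. In the group $\sigma=+1$ each state $S$ \emph{is} a state of $\nilcross$ (its smoothing at $c$ already being a virtual crossing — note the nil-smoothing in figure 18 is the one with the horizontal-parallel bands), and the coefficient $\left\{\poscross\mid S\right\}=a\left\{\nilcross\mid S\right\}$; summing gives exactly $a\,R\left(\nilcross\right)$. The group $\sigma=-1$ gives $a^{-1}R\left(\infcross\right)$ in the same way, and the group $\sigma=0$ gives $R\left(\vertex\right)$ since $\delta(0)=0$. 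Adding the three contributions yields the claim.

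The main thing to get right — and the only real subtlety — is the identification of states and the accounting of crossings: one must check that replacing the crossing $c$ by $+1$, $-1$, or $0$ genuinely produces (up to the chosen local picture) the diagrams $\nilcross$, $\infcross$, $\vertex$, so that $\s{\nilcross}$, $\s{\infcross}$, $\s{\vertex}$ are each canonically the set of ``outer states'' and the weight $\left\{\cdot\mid\cdot\right\}$ transfers correctly, with the $a$, $a^{-1}$, $1$ prefactors coming precisely from the $a^{p-q}$ bookkeeping at $c$. Once the bijection and the equality of outer weights are pinned down, the computation is a direct splitting of a finite sum, entirely analogous to the proof of Proposition~\ref{Rcondel}, so I would present it in that same short style rather than belabor the routine reindexing.
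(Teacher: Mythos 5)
Your proposal is correct and follows essentially the same route as the paper: split the state sum of the diagram with the distinguished crossing $c$ according to the spin chosen at $c$, identify each of the three groups with the states of the corresponding locally replaced diagram, and observe that the weight $\left\{\cdot\mid S\right\}=a^{p-q}$ changes by exactly $a$, $a^{-1}$ or $1$ because $c$ contributes one unit to $p$, to $q$, or to neither. (Only cosmetic slips: your displayed ``bijection'' has a spurious extra factor, and you swapped which of the two smoothings in figure 18 carries spin $+1$; neither affects the argument.)
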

\begin{proofX}
Let $S$ be a state. We write $p=p\left(S,\cdot\right)$ and $q=q\left(S,\cdot\right)$. Then $p\left(S,\nilcross\right)=p\left(S,\poscross\right)-1$ and 
$q\left(S,\nilcross\right)=q\left(S,\poscross\right)$, hence $$\left\{\nilcross\mid S\right\}=a^{p\left(S,\nilcrossU\right)-q\left(S,\nilcrossU\right)}=
a^{p\left(S,\poscrossU\right)-1-q\left(S,\poscrossU\right)}=a^{-1}\left\{\poscross \mid S\right\}.$$ 
In an analogue manner we obtain $\left\{\infcross \mid S\right\}=a\left\{\poscross\mid S\right\}$ and 
$\left\{\vertex\mid S\right\}=\left\{\poscross\mid S\right\}$, therefore $R\left(\poscross\right) $
\begin{eqnarray*}
&=& \sum_{S\in\s{\nilcrossU}} \left\{\poscross\mid S\right\} Q(S) + \sum_{S\in\s{\infcrossU}} \left\{\poscross\mid S\right\} Q(S) +
				\sum_{S\in\s{\vertexU}} \left\{\poscross\mid S\right\} Q(S) \\
&=& aR\left(\nilcross\right)+ a^{-1}R\left(\infcross\right)+R\left(\vertex\right).
\end{eqnarray*}
\end{proofX}

\begin{proposition}\label{Rsplit}
We obtain $R\left(E_1 \sqcup E_2\right)=R\left(E_1\right)R\left(E_2\right)$ for {\tgd s} $E_1$ and $E_2$. 
\end{proposition}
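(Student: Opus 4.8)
The plan is to lift the multiplicativity of $Q$ under disjoint union, established in Proposition \ref{split}, to the polynomial $R$ by working state by state. First I would note that since $E_1\sqcup E_2$ is split, a simple closed curve in $\R^2\setminus\left(E_1\sqcup E_2\right)$ separates the crossings of $E_1$ from those of $E_2$, so the crossing set of $E_1\sqcup E_2$ is the disjoint union of the two crossing sets. Assigning a spin to each crossing of $E_1\sqcup E_2$ therefore amounts to independently assigning spins in $E_1$ and in $E_2$; this gives a bijection
$$
\s{E_1\sqcup E_2}\longleftrightarrow \s{E_1}\times\s{E_2},\qquad S\longleftrightarrow\left(S_1,S_2\right),
$$
and the same separating curve shows that the resulting state decomposes as $S=S_1\sqcup S_2$ as pure {\tgd s}.

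Next I would check that the monomial weights multiply. If $p_i,q_i$ count the $+1$- and $-1$-spins of $S_i$, then $S$ has $p_1+p_2$ spins equal to $+1$ and $q_1+q_2$ equal to $-1$, so
$$
\left\{E_1\sqcup E_2\mid S\right\}=a^{\left(p_1+p_2\right)-\left(q_1+q_2\right)}=\left\{E_1\mid S_1\right\}\left\{E_2\mid S_2\right\}.
$$
Writing $\bar{Q}(\cdot)$ for $Q(\cdot)\left(-a-2-a^{-1},z,w\right)$, Proposition \ref{split} gives $\bar{Q}\left(S\right)=\bar{Q}\left(S_1\right)\bar{Q}\left(S_2\right)$, since each $S_i$ is pure. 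Then
\begin{eqnarray*}
R\left(E_1\right)R\left(E_2\right)&=&\left(\sum_{S_1\in\s{E_1}}\left\{E_1\mid S_1\right\}\bar{Q}\left(S_1\right)\right)\left(\sum_{S_2\in\s{E_2}}\left\{E_2\mid S_2\right\}\bar{Q}\left(S_2\right)\right)\\
&=&\sum_{\left(S_1,S_2\right)\in\s{E_1}\times\s{E_2}}\left\{E_1\mid S_1\right\}\left\{E_2\mid S_2\right\}\bar{Q}\left(S_1\right)\bar{Q}\left(S_2\right)\\
&=&\sum_{S\in\s{E_1\sqcup E_2}}\left\{E_1\sqcup E_2\mid S\right\}\bar{Q}\left(S\right)\quad=\quad R\left(E_1\sqcup E_2\right),
\end{eqnarray*}
where the last step uses the state bijection together with the two displayed identities.

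The whole argument is bookkeeping organised around the state sum. The only point that needs a little attention is to be sure the split hypothesis really does force the crossings — and hence the states — to decompose as a product, and that the separating curve may be reused to identify $S$ with $S_1\sqcup S_2$ so that Proposition \ref{split} applies verbatim; beyond that I anticipate no obstacle. (A completely parallel argument, inserting the factor $-1$ from Proposition \ref{v-conn} in place of Proposition \ref{split}, will presumably handle the vertex connected sum $R\left(E_1\vee E_2\right)=-R\left(E_1\right)R\left(E_2\right)$.)
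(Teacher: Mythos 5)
Your proposal is correct and follows essentially the same route as the paper: decompose each state of $E_1\sqcup E_2$ uniquely as $S_1\sqcup S_2$, observe that the spin counts add so the weights $\left\{\cdot\mid\cdot\right\}$ multiply, and then invoke Proposition \ref{split} for the pure states before expanding the product of the two state sums. The paper compresses the final distributive calculation into the remark that one ``checks the equation as in the proof of Proposition \ref{split}''; you have simply written it out.
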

\begin{proofX} 
Let $E=E_1\sqcup E_2$ and $S\in \s E$. Then $S=S_1\sqcup S_2$ for unique 
$S_i \in\s{E_i}$.
 We write $p=p\left(S,\cdot\right)$ and $q=q\left(S,\cdot\right)$.  Hence $p\left(E_1\sqcup E_2,S\right)=p\left(E_1,S_1\right) +p\left(E_2,S_2\right)$, $q\left(E_1\sqcup E_2,S\right)
=q\left(E_1,S_1\right) +q\left(E_2,S_2\right)$ and
therefore $\left\{E_1\sqcup E_2\mid S_1 \sqcup S_2\right\}=\left\{E_1\mid S_1\right\}\left\{E_2\mid S_2\right\}$. We check the equation as in
the proof of proposition \ref{split} using the assertion of that proposition.
\end{proofX}
\begin{proposition}\label{Rvcon}
We have $R\left(E_1 \vee E_2\right)=-R\left(E_1\right)R\left(E_2\right)$ for  {\tgd s} $E_1$ and $E_2$. 
\end{proposition}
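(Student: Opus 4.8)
The plan is to imitate the proof of Proposition \ref{v-conn}, but now working at the level of states rather than directly with $M$ or $Q$. The key observation is that a vertex connected sum is defined via a single shared vertex, which is never a crossing; hence the crossings of $E_1\vee E_2$ are exactly the disjoint union of the crossings of $E_1$ and those of $E_2$. Consequently every state $S\in\s{E_1\vee E_2}$ decomposes uniquely as $S=S_1\vee S_2$ with $S_i\in\s{E_i}$, because a spin assignment is local to the crossings. This gives the bijection
$$
\begin{diagram}
\node{\s{E_1\vee E_2}} \arrow{e} \node{\s{E_1}\times\s{E_2}} \arrow{w}
\end{diagram}
$$
analogous to \kref{zwei}, and it respects the weights: since the shared vertex contributes no spin, $p\left(E_1\vee E_2,S\right)=p\left(E_1,S_1\right)+p\left(E_2,S_2\right)$ and likewise for $q$, so $\left\{E_1\vee E_2\mid S_1\vee S_2\right\}=\left\{E_1\mid S_1\right\}\left\{E_2\mid S_2\right\}$.

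First I would record these two facts (the state bijection and the multiplicativity of the weights), exactly paralleling the first lines of the proof of Proposition \ref{Rsplit}. Then I would run the computation: expand $R\left(E_1\right)R\left(E_2\right)$ using \kref{R}, regroup the double sum over $\s{E_1}\times\s{E_2}$ using the weight identity just noted, and apply Proposition \ref{v-conn} to each summand $Q\left(S_1\right)Q\left(S_2\right)=-Q\left(S_1\vee S_2\right)$ (all evaluated at $\left(-a-2-a^{-1},z,w\right)$). Pulling the global sign $-1$ out of the sum and reindexing via the state bijection collapses the expression to $-R\left(E_1\vee E_2\right)$. This is the same template as Propositions \ref{split}/\ref{v-conn} and \ref{Rsplit}, so the calculation itself is routine.

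The one point that needs a little care — and I expect it to be the main (minor) obstacle — is that $S_1\vee S_2$ is a genuinely well-defined element of $\s{E_1\vee E_2}$ and that Proposition \ref{v-conn} applies to it. Here $S_1$ and $S_2$ are pure twisted graph diagrams meeting in exactly the one vertex $v$, and the splitting curve for $E_1\vee E_2$ through $v$ still separates $S_1$ from $S_2$ after the crossings are resolved, since resolving a crossing is a local move that does not cross that curve. Hence $S_1\vee S_2$ is indeed a vertex connected sum of pure twisted graph diagrams, and Proposition \ref{v-conn} is directly applicable. Once this is noted, one writes, with $A:=-1-y$, $B:=-1-yzw$ suppressed into the $Q$-evaluation and all $Q$'s understood at $\left(-a-2-a^{-1},z,w\right)$:
\begin{eqnarray*}
R\left(E_1\right)R\left(E_2\right) &=& \sum_{S_1\in\s{E_1}}\sum_{S_2\in\s{E_2}} \left\{E_1\mid S_1\right\}\left\{E_2\mid S_2\right\} Q\left(S_1\right)Q\left(S_2\right)\\
&=& \sum_{S_1\in\s{E_1}}\sum_{S_2\in\s{E_2}} \left\{E_1\vee E_2\mid S_1\vee S_2\right\}\left(-Q\left(S_1\vee S_2\right)\right)\\
&=& -\sum_{S\in\s{E_1\vee E_2}} \left\{E_1\vee E_2\mid S\right\} Q\left(S\right) \;=\; -R\left(E_1\vee E_2\right),
\end{eqnarray*}
which is the claim.
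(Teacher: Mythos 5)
Your proof is correct and follows essentially the same route as the paper: the paper's proof of Proposition \ref{Rvcon} simply says to repeat the proof of Proposition \ref{Rsplit} with $\sqcup$ replaced by $\vee$ and Proposition \ref{split} replaced by Proposition \ref{v-conn}, which is exactly the state-decomposition-plus-weight-multiplicativity argument you carry out in detail. Your extra remark that $S_1\vee S_2$ is a genuine vertex connected sum of pure diagrams (so that Proposition \ref{v-conn} applies) is a correct and slightly more careful spelling-out of what the paper leaves implicit.
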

\begin{proofX}
Replace $E_1\sqcup E_2$ with $E_1\vee E_2$ and \ref{split} with \ref{v-conn} in the proof of proposition \ref{Rsplit}.\newline
\end{proofX}
\begin{proposition}\label{Rcut}
$R\left(E\right)=0$ if a {\tgd} $E$ has a cut-edge.
\end{proposition}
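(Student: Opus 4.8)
The plan is to reproduce the proof of Proposition~\ref{cutedge} line for line, replacing the three $Q$-identities used there by their $R$-analogues. First I would record the local geometry of a cut-edge $e$: since $E-e$ is split I may write $E-e=E_1\sqcup E_2$, where a simple closed curve separates $E_1$ from $E_2$ in the plane; the edge $e$ then joins a vertex of $E_1$ to a vertex of $E_2$, so contracting $e$ identifies those two vertices and produces the vertex connected sum $E/e=E_1\vee E_2$ with the \emph{same} pair $E_1,E_2$. As in Proposition~\ref{cutedge}, Remark~\ref{nobother} makes it irrelevant whether $e$ carries bars, since contraction along a twisted edge is contraction along an ordinary edge after a T4-move, and this does not disturb the cut-edge property.

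The proof is then the formal computation
\begin{eqnarray*}
R(E) &\stackrel{\ref{Rcondel}}{=}& R(E-e)+R(E/e)\;=\;R(E_1\sqcup E_2)+R(E_1\vee E_2)\\
&\stackrel{\ref{Rsplit},\ref{Rvcon}}{=}& R(E_1)R(E_2)-R(E_1)R(E_2)\;=\;0 .
\end{eqnarray*}

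I do not expect a genuine obstacle: the statement is a formal consequence of Propositions~\ref{Rcondel}, \ref{Rsplit} and~\ref{Rvcon}, exactly as Proposition~\ref{cutedge} follows from Propositions~\ref{contrdel}, \ref{split} and~\ref{v-conn}. Two points deserve a line of justification. First, that $E-e$ and $E/e$ decompose through the same $E_1,E_2$; this is immediate from the definitions of ``split'' and ``vertex connected sum'' and is precisely the remark already used in the proof of Proposition~\ref{cutedge}. Second, that Proposition~\ref{Rcondel}, although phrased for a local edge picture, applies to an arbitrary cut-edge $e$; this follows by the same reasoning as in its proof, since in the defining sum for $R(E)$ one may apply Proposition~\ref{contrdel} to each $Q(S)$ --- the state weights $\{E\mid S\}$ being unchanged under contraction or deletion of $e$ --- so that the sum splits as $R(E/e)+R(E-e)$; if one prefers, Remark~\ref{method} first puts $e$ into the required local form. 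Alternatively one can bypass Propositions~\ref{Rsplit} and~\ref{Rvcon} altogether by observing that each state $S\in\s{E}$ still has $e$ as a cut-edge, since the crossing resolutions take place in small disks that may be chosen disjoint from a separating curve for $E-e$; then $Q(S)\equiv 0$ by Proposition~\ref{cutedge} and $R(E)=\sum_{S\in\s{E}}\{E\mid S\}\,Q(S)\left(-a-2-a^{-1},z,w\right)=0$.
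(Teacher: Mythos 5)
Your argument is correct, but your primary route is not the one the paper takes. The paper's proof is exactly the observation you relegate to your final sentence: for a cut-edge $e$ of $E$, all crossing resolutions defining a state $S$ happen inside small disks disjoint from a separating curve for $E-e$, so the arc of $S$ coming from $e$ is still a cut-edge of the pure diagram $S-$(that arc being removed); hence $Q(S)=0$ for every state by Proposition~\ref{cutedge}, and $R(E)=\sum_{S}\left\{E\mid S\right\}Q(S)=0$. Your main computation instead replays the proof of Proposition~\ref{cutedge} one level up, at the level of $R$: writing $E-e=E_1\sqcup E_2$, $E/e=E_1\vee E_2$ and combining Propositions~\ref{Rcondel}, \ref{Rsplit} and~\ref{Rvcon} to get $R(E)=R(E_1)R(E_2)-R(E_1)R(E_2)=0$. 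This is legitimate, and your care about why Proposition~\ref{Rcondel} applies to an arbitrary (possibly twisted, non-locally-presented) cut-edge --- via Remarks~\ref{method} and~\ref{nobother} and the invariance of the state weights under contraction/deletion of $e$ --- is at least as rigorous as the paper's own use of Proposition~\ref{contrdel} inside Proposition~\ref{cutedge}. The trade-off: the paper's route is shorter and needs only the already-proved pure-diagram statement plus one geometric remark about states, whereas your route re-derives the cancellation from the $R$-level multiplicativity results; both sets of ingredients are available at this point in the paper, so either proof stands.
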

\begin{proofX}
Let $e$ be a cut-edge of $E$, $E-e=E_1 \sqcup E_2, E_1 \subset \Sigma$ and $E_2 \subset \R^2\setminus \Sigma$. Then the components of  a state $S$  not containing  the arc $a$ of $S$ coming from the edge $e$ are either contained in $\Sigma$ or in $\R\setminus \Sigma$. Therefore $S -a$ is split, hence $a$ is a cut-edge for $S$.
Now the assertion follows by means of proposition   \ref{cutedge}.
\newline
\end{proofX}
\begin{example}\label{beispielR} Let $y=-a-2-a^{-1}$.
\begin{enumerate}
\item $R\left(\bullet\right)(a,z,w) = Q\left(\bullet\right)\left(-a-2-a^{-1},z,w\right)=-1$.
\item\label{sigma} $R\left(\Loop\right)(a,z,w)=Q\left(\Loop\right)\left(-a-2-a^{-1},z,w\right)=-1-\left(-a-2-a^{-1}\right)=a+1+a^{-1}=:\sigma=R\left(\Circ\right)$.
\item $R\left(\loopbar\right)=-1-\left(-a-2-a^{-1}\right)zw=-1+(\sigma+1)zw=R\left(\circbar\right)$.
\item \label{virtbouqet}$R\left(\doubleLoop\right)=Q\left(\doubleLoop\right)=M\left(\bullet\right)+2M\left(\Loop\right)+M\left(\doubleLoop\right)=-1-2y-y^2z^2$.
\item $R\left(\twoLoop\right)=-R\left(\Loop\right)R\left(\Loop\right)=-(-1-y)^2$.
\end{enumerate}
\end{example}
Because of propositions \ref{Rcondel}, \ref{Rskein}, \ref{Rsplit}, \ref{Rvcon}, \ref{Rcut} and example \ref{beispielR}.\ref{sigma} the propositions
 4 and 5 as well as theorem 5 of \cite{yama:89} are valid in our setting. We sum it up in 
\begin{proposition}\label{invariance RM}
The polynomial $R$ in \kref{R} is invariant under Reidemeister moves II, III, IV and up to multiplication with some $\left(-a\right)^{n}$ 
invariant under I and V. 
\end{proposition}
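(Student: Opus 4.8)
The plan is to follow the argument of Yamada in \cite{yama:89} almost verbatim, since the preceding propositions have assembled, in the twisted setting, exactly the structural identities that drive his proof. Recall that in \cite{yama:89} the invariance of the polynomial under the Reidemeister moves is a purely formal consequence of five facts: the skein relation at a classical crossing, the contraction--deletion relation at an ordinary edge, multiplicativity under disjoint union and under vertex-connected sum, vanishing on a cut-edge, and the value $\sigma=a+1+a^{-1}$ on a free loop (together with the value $-1$ on an isolated vertex). In our situation these are Propositions \ref{Rskein}, \ref{Rcondel}, \ref{Rsplit}, \ref{Rvcon}, \ref{Rcut} and Example \ref{beispielR}. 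Having established all of them, Propositions~4 and~5 of \cite{yama:89}, which record the effect of each elementary local modification on $R$, and hence Theorem~5 there, translate word for word and yield the asserted invariance.

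Concretely, I would treat the moves one at a time. For Reidemeister moves II, III and IV, which do not change the framing, the first step is to use the skein relation \ref{Rskein} (and its mirror for negative crossings) to expand every classical crossing appearing in the local picture; this reduces both sides of the move to $\mathbb{Z}[a^{\pm1},z,w]$-linear combinations of $R$-values of \emph{pure} twisted graph diagrams. By the Remarks \ref{Qinvariant} and \ref{QinvariantT}, $R$ on a pure diagram depends only on the class in TG under the virtual moves I*--V* and the twisted moves T1--T4; since the two local pictures agree outside the disk and the smoothed pictures inside the disk are related exactly by such moves, the two linear combinations agree term by term. Where Yamada argues graph-theoretically instead --- via disjoint union, vertex sum, or the cut-edge relation --- one copies those steps directly, invoking Propositions \ref{Rsplit}, \ref{Rvcon}, \ref{Rcut}. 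For moves I and V, which may create or destroy a curl, the same expansion is applied to the single crossing of the curl; collecting the resulting pure diagrams and using the loop value $\sigma=a+1+a^{-1}$ of Example \ref{beispielR}.\ref{sigma}, one finds that $R$ of the curled diagram equals $(-a)^{\pm1}$ times $R$ of the uncurled one, and iterating over all curls produces the factor $(-a)^n$.

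The one point that genuinely requires attention, rather than transcription, is to verify that none of Yamada's local reductions secretly uses the forbidden virtual move VI*. This is what guarantees that, after smoothing all crossings, the pure subdiagrams on the two sides of a move actually represent the same class in TG, so that their $R$-values coincide. It holds because each application of the skein relation \ref{Rskein} is supported in a small disk, the remainder of the diagram is left untouched, and the only rearrangements needed to match the two sides are planar isotopies together with the moves I*--V* and T1--T4 --- all of which leave $Q$, hence $R$ on pure diagrams, unchanged by Remarks \ref{Qinvariant} and \ref{QinvariantT}. Once this is checked, the remaining bookkeeping is exactly that of \cite{yama:89}, and I expect this VI*-avoidance check to be the only real obstacle.
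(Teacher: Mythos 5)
Your proposal is correct and matches the paper's approach: the paper likewise gives no independent argument but simply observes that Propositions \ref{Rcondel}, \ref{Rskein}, \ref{Rsplit}, \ref{Rvcon}, \ref{Rcut} and Example \ref{beispielR}.\ref{sigma} supply exactly the relations needed for Propositions 4 and 5 and Theorem 5 of \cite{yama:89} to carry over verbatim. Your additional remarks on expanding crossings into pure diagrams and avoiding the forbidden move VI* are sensible elaborations of the same citation-based argument, not a different route.
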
  
\begin{proposition}\label{RiVirt}
The polynomial $R$ in \kref{R} is invariant under Reidemeister moves I*, II*, III* and IV*.
\end{proposition}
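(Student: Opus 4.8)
The plan is to exploit the state-sum form of $R$ in \kref{R} and to reduce the claim, state by state, to the invariance of $Q$ under virtual moves recorded in Remark~\ref{Qinvariant}. Suppose $E$ and $E'$ are twisted graph diagrams differing by a single move among I*, II*, III*, IV*, performed inside a disk $\Sigma$, with $E$ and $E'$ identical outside $\Sigma$. First I would set up a bijection $\beta\colon\s E\to\s{E'}$. For moves I*, II* and III* the disk $\Sigma$ contains only virtual crossings, so the classical crossings of $E$ and of $E'$ are literally the same set; since a state is a choice of spin at each classical crossing, $\beta$ is simply the identity on spin assignments. For move IV* --- a strand slid past a vertex along virtual crossings (or, under the other reading, a semivirtual triangle move carrying a single classical crossing $c$ across $\Sigma$) --- the classical crossings of $E$ and $E'$ are again in an obvious correspondence, so $\beta$ is defined by copying every spin assignment; any bars lying inside $\Sigma$ are untouched by the move and play no role.

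Next I would verify that $\beta$ preserves the state weights. Since $\left\{E\mid S\right\}=a^{p-q}$ depends only on the multiset of spins attached to the classical crossings, and $\beta$ fixes that multiset, we get $\left\{E\mid S\right\}=\left\{E'\mid\beta(S)\right\}$ for every $S\in\s E$. Then, for a fixed state $S$, the pure twisted graph diagrams $S$ and $\beta(S)$ agree outside $\Sigma$, and inside $\Sigma$ they differ by exactly the same move I*, II*, III* or IV* as $E$ and $E'$ do: the spin replacements at the classical crossings all take place outside $\Sigma$ and therefore commute with the move. (In the IV* reading where $c$ lies in $\Sigma$: if $c$ has spin $0$ it has become an ordinary $4$-valent vertex and the move is literally unchanged; if $c$ has spin $\pm 1$ it has been smoothed, and sliding the two resulting arcs past the virtual crossings of $\Sigma$ is a finite composition of moves II* and III*.) In every case $Q(S)=Q(\beta(S))$ by Remark~\ref{Qinvariant}, hence $Q(S)\left(-a-2-a^{-1},z,w\right)=Q(\beta(S))\left(-a-2-a^{-1},z,w\right)$.

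Combining the two points,
\[
R(E)=\sum_{S\in\s E}\left\{E\mid S\right\}Q(S)\left(-a-2-a^{-1},z,w\right)=\sum_{S'\in\s{E'}}\left\{E'\mid S'\right\}Q(S')\left(-a-2-a^{-1},z,w\right)=R(E'),
\]
which is the assertion. The step requiring the most care is the third one, and within it the delicate point is simply to be sure that none of the four moves drags a classical crossing into the disk $\Sigma$ of the move, so that resolving the classical crossings is independent of performing the virtual move; once that is granted, the invariance of $Q$ furnished by Remark~\ref{Qinvariant} does the rest.
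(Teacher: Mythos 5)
Your argument is correct and follows essentially the same route as the paper: a bijection of states fixing the spin data, preservation of the weights $\left\{E\mid S\right\}$ because the classical crossings are unchanged, and invariance of $Q$ on each pair of corresponding states via Remark~\ref{Qinvariant}. The extra care you take for the reading of IV* that carries a classical crossing is not needed here (in this paper that mixed move is V*, handled separately in Proposition~\ref{invariance vV}), but it does no harm.
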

\begin{proofX}
If {\tgd s} $E$ and $E'$ differ by one of the moves mentioned in the assertion then for each state $S\in \s E$ there is a unique state 
$S' \in \s{E'}$ differing by the same Reidemeister move. As $E$ and $E'$ have the same crossings, we obtain 
$\left\{E\mid S\right\}=\left\{E'\mid S'\right\}$. From remark \ref{Qinvariant} we know $Q\left(S\right)=Q\left(S'\right)$. 
Thus the proof is finished by the definition of $R$.
\end{proofX}
\begin{proposition}\label{invariance vV}
The polynomial $R$ in \kref{R} is invariant under Reidemeister move V*.
\end{proposition}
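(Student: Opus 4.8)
\begin{proofX}
The plan is to reduce the assertion to the corresponding statement for $Q$, exactly as in the proof of Proposition \ref{RiVirt}. A move V$^*$ involves a vertex and virtual crossings only, no classical crossing; hence if {\tgd s} $E$ and $E'$ differ by a V$^*$-move, then resolving the (unaffected) classical crossings gives a bijection $\s E\to\s{E'}$, $S\mapsto S'$, where $S'$ is obtained from the pure {\tgd} $S$ by the same V$^*$-move, and $\left\{E\mid S\right\}=\left\{E'\mid S'\right\}$ because $E$ and $E'$ carry the same classical crossings with the same spins. By \kref{R} it therefore suffices to prove $Q(S)=Q(S')$ for pure {\tgd s} $S$ and $S'$ that differ by a V$^*$-move.

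So suppose $S$ and $S'$ are pure and differ by a V$^*$-move supported inside a disk $\Sigma$. Since the move takes place near a vertex and does not alter the underlying graph, it creates and destroys no circle components, so $e(S)=e(S')$, $o(S)=o(S')$ and the two prefactors of $Q$ coincide; moreover, keeping the same edges gives a natural bijection between the spanning subdiagrams $F\subset S$ and $F'\subset S'$. For such a pair, $F$ and $F'$ are identical outside $\Sigma$, and inside $\Sigma$ they either coincide --- this happens when the edges incident to the vertex of the move that are involved in it have been deleted, so that the configuration degenerates --- or, depending on which of those edges survive, they differ by a move of one of the types I$^*$ to V$^*$. In either case the proof of Proposition \ref{Phi} (where it is noted that move V$^*$ is treated in the same way as IV$^*$) yields $\phi(F)\approx\phi(F')$, hence $M(F)=M(F')$, since $M$ depends only on the homeomorphism type of the disk/band surface. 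Summing over spanning subdiagrams gives $\sum_{F\subset S}M(F)=\sum_{F'\subset S'}M(F')$, so $Q(S)=Q(S')$, and finally $R(E)=R(E')$ by the first paragraph.

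The step I expect to be the main obstacle is the second one: verifying that the restriction of a V$^*$-move to an arbitrary spanning subdiagram always produces two {\agd s} with homeomorphic disk/band surfaces. The care lies in the case analysis over the ways the edges incident to the vertex of the V$^*$-move can be deleted --- for each such pattern the surface invariants $k$, $n$, $b$ and $t$ must be seen to remain unchanged --- and this is precisely why move V$^*$, unlike the moves I$^*$ to IV$^*$, was not already subsumed under Remark \ref{Qinvariant} but requires the separate argument above.
\end{proofX}
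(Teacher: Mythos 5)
Your proof rests on a misidentification of the move V$^*$. In this paper V$^*$ is the mixed move $\VvirtL=\VvirtR$: a strand carrying two virtual crossings slides past a \emph{classical} crossing (the analogous move past a \emph{vertex} is IV$^*$, already covered by Proposition \ref{RiVirt}). So your opening claim that ``a move V$^*$ involves a vertex and virtual crossings only, no classical crossing'' is false, and the reduction built on it collapses: states are pure diagrams, pure diagrams have no classical crossings, hence two states can never ``differ by the same V$^*$-move''. The classical crossing participating in the move is resolved when passing to states, and the two resolved diagrams differ by purely virtual moves I$^*$--III$^*$ (when the spin is $\pm1$) or by a IV$^*$-move (when the spin is $0$), never by V$^*$. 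For the same reason your closing paragraph addresses a non-issue: Remark \ref{Qinvariant} omits V$^*$ not because some spanning-subgraph case analysis is delicate, but because a V$^*$-move cannot occur between pure diagrams at all; your second paragraph is in effect an (unneeded) re-derivation of $Q$-invariance under IV$^*$, which Proposition \ref{Phi} already supplies.

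The actual argument, which is what the paper does, is short: first resolve the classical crossing with the skein relation of Proposition \ref{Rskein}, $\RR{\VvirtL}=a\RR{\nilcrossPa}+a^{-1}\RR{\infcrossPa}+\RR{\vertexPa}$; then observe that in each of the three terms the horizontal strand can be pushed from top to bottom using only I$^*$--III$^*$ (for the two smoothings) and IV$^*$ (for the vertex term), under all of which $R$ is invariant by Proposition \ref{RiVirt}; finally reassemble with the skein relation to obtain $\RR{\VvirtR}$. Your bookkeeping of the weights $\left\{E\mid S\right\}$ under the state bijection is correct and, once the move is correctly identified, your strategy collapses into exactly this argument; as written, however, the proof does not establish the proposition.
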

\begin{proofX}
Because of propositions \ref{Rskein} and \ref{RiVirt} we may calculate $\RR{\VvirtL} =  a\RR{\nilcrossPa}+a^{-1}\RR{\infcrossPa} +\RR{\vertexPa} 
=a\RR{\nilcrossPb}+a^{-1}\RR{\infcrossPb} +\RR{\vertexPb} = \RR{\VvirtR}$.\newline
\end{proofX}
\begin{proposition}
The polynomial $R$ in \kref{R} is invariant under Reidemeister moves T1, T2, T4.
\end{proposition}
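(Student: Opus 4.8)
The plan is to repeat almost verbatim the argument used for Proposition~\ref{RiVirt}, since none of the moves T1, T2, T4 takes place at a classical crossing. First I would fix a disk $\Sigma\subset\R^2$ in which $E$ and the diagram $E'$ obtained by the move look like the two sides of T1, T2 or T4 in figure~2 and agree outside $\Sigma$. Inspecting those local pictures, $\Sigma$ contains only virtual crossings, bars, and --- in the case of T4 --- a single vertex, but no classical crossing. Hence the classical crossings of $E$ and of $E'$ lie entirely outside $\Sigma$ and are in an obvious one-to-one correspondence.

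The next step is to set up the induced bijection $\s E\to\s{E'}$, $S\mapsto S'$: a state is obtained by assigning a spin $+1,-1$ or $0$ to each classical crossing, so making the same assignments on the matched crossings turns $S$ into a well-defined $S'$, and this is clearly invertible. The key point to record here is that forming a state only resolves classical crossings; in particular the vertex appearing in a T4-move is left untouched, so $S$ and $S'$ are again pure {\tgd s} that differ by precisely the same move T1, T2 or T4. I would then check the two bookkeeping facts that make the state sum \kref{R} match term by term: $\left\{E\mid S\right\}=\left\{E'\mid S'\right\}$, because $E$ and $E'$ carry the same classical crossings with the same spins assigned in $S$ and $S'$; and $Q(S)=Q(S')$ in $\mathbb{Z}[y,z,w]/(w^2-w)$, which is exactly Remark~\ref{QinvariantT} applied to the pure {\tgd s} $S$ and $S'$. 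Substituting $y=-a-2-a^{-1}$ preserves the latter equality, and summing over the bijection then gives $\RR{E}=\RR{E'}$.

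I do not expect any real obstacle here: this is the same pattern as Propositions~\ref{RiVirt} and~\ref{invariance vV}, and everything reduces to the already-established invariance of $Q$. The only point worth flagging --- and, I suspect, the reason the move T3 is deliberately absent from the statement --- is that T3 is the one twisted move that occurs \emph{at} a classical crossing, so the clean state-by-state correspondence above is no longer directly available for it; invariance under T3 has to be obtained separately, presumably from the skein relation of Proposition~\ref{Rskein} together with the virtual-move invariance of Proposition~\ref{RiVirt}.
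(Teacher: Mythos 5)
Your proposal is correct and follows exactly the paper's route: the paper proves this by repeating the state-bijection argument of Proposition~\ref{RiVirt} with Remark~\ref{Qinvariant} replaced by Remark~\ref{QinvariantT}, which is precisely what you do. Your closing observation about T3 also matches the paper, which indeed handles that move separately via the skein relation of Proposition~\ref{Rskein}.
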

\begin{proofX}
The proof is exactly the same as in proposition \ref{RiVirt} exept for replacing remark \ref{Qinvariant} by remark \ref{QinvariantT}.\newline
\end{proofX}
\begin{proposition}
The polynomial $R$ in \kref{R} is invariant under Reidemeister move T3.
\end{proposition}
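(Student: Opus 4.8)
The idea is to treat T3 exactly as Proposition \ref{invariance vV} treats the move V$^*$. The local picture of T3 (figure 2) contains a classical crossing, so --- unlike the moves T1, T2, T4, whose local pictures contain no classical crossings --- the state sum \kref{R} cannot be matched term by term across T3. Instead one first removes that crossing by the skein relation and is then reduced to comparing pure {\tgd s}, for which the invariances established so far suffice.

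Let $E$ and $E'$ denote the left-hand and right-hand {\tgd s} of the move T3. For a diagram $P$ in which a chosen classical crossing has been singled out, write $P_{0},P_{\infty},P_{v}$ for the diagram obtained by replacing that crossing by the $0$-smoothing, the $\infty$-smoothing, or a four-valent vertex. Proposition \ref{Rskein} then gives
$$
R(P)\;=\;a\,R(P_{0})\;+\;a^{-1}R(P_{\infty})\;+\;R(P_{v})
$$
(if the crossing is of the type opposite to the one drawn in Proposition \ref{Rskein}, the same computation yields this relation with the two smoothings, and the coefficients $a$ and $a^{-1}$, simultaneously interchanged). Applying this to the crossing in $E$ and to the correspondingly placed crossing in $E'$ reduces the claim $R(E)=R(E')$ to three identities between pure {\tgd s}, one for each resolution --- namely $R(E_{0})=R(E'_{0})$, $R(E_{\infty})=R(E'_{\infty})$, $R(E_{v})=R(E'_{v})$ if the two crossings have the same sign, and with the two smoothings interchanged on one side otherwise. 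Each of the six diagrams involved is pure: the only crossings remaining are the virtual ones already present in the T3 picture, possibly together with some bars.

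To prove these three identities I would compare, in each case, the pure {\tgd} coming from $E$ with the one coming from $E'$ by a diagram chase inside the disk. The claim is that the two are related by a finite sequence of virtual Reidemeister moves I$^*$--V$^*$ and of the twisted moves T1 and T2 (together with planar isotopy) --- and, crucially, no further T3, since after the resolution no classical crossing survives on which T3 could act, so there is no circularity. Concretely, once the crossing has been smoothed or replaced by a vertex, the rerouting that distinguishes $E$ from $E'$ no longer meets an obstruction, so the extra virtual crossings of the $E'$-picture can be removed by virtual moves, sliding any bar past a virtual crossing by T1 and cancelling bars in pairs by T2, until the $E$-picture is reached. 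Since $R$ is invariant under I$^*$--V$^*$ by Propositions \ref{RiVirt} and \ref{invariance vV}, and under T1 and T2 by the preceding proposition, the corresponding $R$-values coincide; adding the three contributions with their coefficients gives $R(E)=R(E')$, and since $E$ and $E'$ agree outside the disk this proves invariance of $R$ under T3.

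The real work, and the only delicate point, is this diagram chase, and within it the bookkeeping of bars modulo $2$. Smoothing the classical crossing identifies its four legs in pairs and may thereby close some arcs into new components, in particular into new {\cc}, which then carry the bars of the T3 picture; and $Q$, hence $R$, distinguishes a {\cc} with an odd number of bars from one with an even number (the factor $-1-yzw$ against $-1-y$). One must therefore verify, in each of the three resolutions, not merely that the two pure diagrams agree as virtual graph diagrams up to virtual moves, but also that every component carries the same number of bars modulo $2$ on both sides; this is exactly what T1 (which moves a bar and so preserves its parity) and T2 (which alters the number of bars by $2$) are for, and it has to be checked for all three resolutions, in the vertex resolution also keeping track of how the bars are distributed among the arcs incident to the four-valent vertex.
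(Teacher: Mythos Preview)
Your overall strategy is exactly the paper's: resolve the single classical crossing in the T3 picture via the skein relation (Proposition~\ref{Rskein}) and then check that each of the three resulting pure diagrams on the complicated side simplifies to the corresponding standard piece $\nilcross$, $\infcross$, $\vertex$ on the simple side. For the two smoothings this works just as you describe --- after sliding with T1 the bars end up in pairs on the same arc and cancel by T2, and the leftover virtual self-crossings disappear by I$^*$ or II$^*$.

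The gap is in the vertex resolution. When the classical crossing in the T3 picture is replaced by a $4$-valent vertex, the four bars sit one on each of the four distinct half-edges incident to that vertex. Now T1 only slides a bar along its own edge past a virtual crossing, and T2 only cancels two bars lying on the \emph{same} edge; neither move transfers a bar from one half-edge at a vertex to another. Hence no sequence of I$^*$--V$^*$, T1, T2 can remove these four bars, and your parity bookkeeping breaks down precisely at the point you yourself flag as delicate. What is needed is T4, which trades a vertex carrying a bar on every incident half-edge for the same vertex with the cyclic order of its half-edges reversed (realised by virtual crossings) and no bars; after T4 the new virtual crossings cancel against those already present by I$^*$/II$^*$, yielding the plain $\vertex$. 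The paper's proof invokes exactly I$^*$, II$^*$, T2 and T4 at this step. Since invariance of $R$ under T4 is part of the preceding proposition, the repair is simply to add T4 to your list of allowed moves --- but the claim that T1 and T2 alone handle the bars is false.
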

\begin{proofX} We calculate
\begin{eqnarray}
\RR{\Tiv} &=& a\RR{\TivInf}+a^{-1}\RR{\TivNil}+\RR{\TivVertex} \label{T4a} \\ 
&=& a\RR{\nilcross}+a^{-1}\RR{\infcross} +\RR{\vertex} \label{T4b} \\
&=& \RR{\poscross} \label{T4c}
\end{eqnarray}
using proposition \ref{Rskein} in \kref{T4a} resp.~\kref{T4c} and Reidemeister moves I*, II*, T2 and T4 in \kref{T4b}.
\newline
\end{proofX}
\section{Relations to other polynomials}
Let $S$ be a state. As usual we regard $S$ as a pure {\tgd} as well as the underlying abstract graph. 
Define $k(S) =$ \# components of $S$, $ n(S) =$ first betti number of $S$, $E(S) =$ set of edges of $S$, $V(S) =$ set of vertices of $S$, 
$u(S)=$ \# of circle components of $S$ and $\hat{F}=$ spanning subgraph/subdiagram of $S$ with edge set $F\subset E(S)$.

For a classical graph diagram $D$ the Yamada polynomial is defined to be
$$Y(D)(a)=\sum_{S\in\s D } \left\{D\mid S\right\} h(S)\left(-1,y\right), \quad y:=-a-2-a^{-1} \quad\mbox{where}$$
$$h(S)\left(-1,y\right)=\sum_{F\subset E(S)} (-1)^{k(S-F)}y^{n(S-F)}= \sum_{F \subset S} (-1)^{k(F)}y^{n(F)}.$$
Note that we consider each \cc~of S as a loop with one vertex of degree 2. In the last summation $F$ raises over all 
spanning subgraphs/subdiagrams of $S$.  
From \cite{yama:89} we know $h\left(\Loop\right)(-1,y)=-1-y$.
\begin{proposition}\label{RundY}
Let $D$ be a classical graph diagram possibly with circle components. Then $R(D)(a,1,1)=Y(D)(a)$.
\end{proposition}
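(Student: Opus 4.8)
The plan is to compare the two state sums summand by summand, reducing the claim to a purely combinatorial identity for a single state. Since $D$ is classical, every state $S\in\s{D}$ is obtained from $D$ by replacing each classical crossing by one of the two planar smoothings or by a $4$-valent vertex (figure 18); hence $S$ is a crossingless graph diagram that embeds in $\R^2$ and carries neither virtual crossings nor bars. The index set $\s{D}$ and the weights $\left\{D\mid S\right\}$ are literally the same in \kref{R} and in the definition of $Y$, so it is enough to prove, for every such $S$,
$$Q(S)(y,1,1)=h(S)(-1,y),\qquad y=-a-2-a^{-1};$$
multiplying by $\left\{D\mid S\right\}$ and summing over $S\in\s{D}$ then yields $R(D)(a,1,1)=Y(D)(a)$.

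To treat the left-hand side, let $S_0$ be the subdiagram of $S$ obtained by deleting its $u(S)$ circle components. Because $S$ has no bars we have $o(S)=0$ and $e(S)=u(S)$, so the definition of $Q$ reads
$$Q(S)(y,z,w)=(-1-y)^{u(S)}\sum_{E'}M(E')(y,z,w),$$
the sum running over all spanning subdiagrams $E'$ of $S_0$. Fix such an $E'$; being a subdiagram of the crossingless $S$ it embeds in $\R^2$, and since $E'$ is crossingless and bar-less the only local replacement of figure 3 occurring in $\phi(E')$ is the vertex one, so the disk/band surface $\Sigma'$ of $\phi(E')$ is a planar regular neighbourhood of $E'$. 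Thus $\Sigma'$ is orientable, giving $t(\Sigma')=0$; it deformation-retracts onto $E'$, giving $k(\Sigma')=k(E')$ and $n(\Sigma')=n(E')$; and, being a planar surface with nonempty boundary on every component, it satisfies $n(\Sigma')=b(\Sigma')-k(\Sigma')$, i.e.~$k(\Sigma')-b(\Sigma')+n(\Sigma')=0$. Hence $M(E')(y,z,w)=(-1)^{k(E')}y^{n(E')}$, independently of $z$ and $w$, and therefore
$$Q(S)(y,1,1)=(-1-y)^{u(S)}\sum_{E'}(-1)^{k(E')}y^{n(E')}.$$

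For the right-hand side I would use that $h$ is multiplicative over disjoint unions, $h(S_1\sqcup S_2)(-1,y)=h(S_1)(-1,y)\,h(S_2)(-1,y)$ (since $k$ and $n$ add over components and the spanning-subgraph choices on distinct components are independent), together with the fact that each circle component of $S$, regarded as a loop with one degree-$2$ vertex, contributes the factor $h(\Loop)(-1,y)=-1-y$ from \cite{yama:89}. This gives $h(S)(-1,y)=(-1-y)^{u(S)}\,h(S_0)(-1,y)$, while $h(S_0)(-1,y)=\sum_{E'}(-1)^{k(E')}y^{n(E')}$ over exactly the same index set of spanning subdiagrams of $S_0$. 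Comparing this with the last display finishes the proof.

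I expect the only genuinely non-formal point to be the topological bookkeeping in the second step: one must check that $\phi$ applied to a crossingless planar diagram really produces a planar (orientable, genus-$0$) disk/band surface, and then correctly extract $t(\Sigma')=0$ and $k(\Sigma')-b(\Sigma')+n(\Sigma')=0$ from it. Equally, one must take care to match the two bookkeepings of circle components — the prefactor $(-1-y)^{e(S)}$ in $Q$ against the loop factor $h(\Loop)=-1-y$ entering $Y$ — and to identify the spanning-subdiagram sum of $Q$ (which ignores circle components) with the spanning-subgraph sum of $h$.
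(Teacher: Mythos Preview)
Your argument is correct and follows essentially the same route as the paper's proof: both reduce to the statewise identity $Q(S)(y,1,1)=h(S)(-1,y)$, split off the circle components via the prefactor $(-1-y)^{u(S)}$, and identify the remaining spanning-subdiagram sums. The only difference is that you do more work than necessary in step~2: you show $t(\Sigma')=0$ and $k(\Sigma')-b(\Sigma')+n(\Sigma')=0$ so that $M(E')$ is literally independent of $z$ and $w$, whereas the paper simply substitutes $z=w=1$ into $M(E')(y,z,w)=(-1)^{k}y^{n}z^{k-b+n}w^{t}$ and lets the powers of $1$ disappear---your stronger observation is correct (and indeed reappears later in the paper when the $z$-degree of classical diagrams is shown to vanish), but it is not needed here.
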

\begin{proof}
For $z=w=1$ we get $R(D)(a,1,1)=\sum_{S\in \s D } \left\{D\mid S\right\} Q(S)(y,1,1)$ with $y=-a-2-a^{-1}$.  Thus we have to show $h(S)(-1,y)=Q(S)(y,1,1)$:
\begin{eqnarray}
Q(S)(y,1,1) &=& (-1-y)^{e(S)}(-1-y)^{o(S)}\sum_{E\subset S}M(E) (y,1,1)\nonumber\\
&=& (-1-y)^{u(S)}\sum_{E\subset S} (-1)^{k(E)} y^{n(E)} \nonumber \\
&=& h\left(\Loop\right)(-1,y)^{\# \loopU} h\left(S \setminus c.c.~\right)(-1,y) \label{loopCC}\\  &=& h(S) (-1,y). \nonumber
\end{eqnarray}
Note that in \kref{loopCC} we  identify each  \cc~of $D$ with a loop \Loop.
\newline
\end{proof}
Let $E$ be a virtual graph diagram possibly with \cc~In \cite{miya06} a polynomial is defined as follows:
$$H_E(a,1)=\sum_{S\in \s E} \left\{E \mid S\right\} Z_S\left(a+2+a^{-1}\right),$$
\begin{equation} Z_S(-y)=(-1-y)^{u(S)}y^{-\# V(S)} \sum_{F\subset E(S)} (-y)^{k\left(\hat{F}\right)} y^{\# E(F)}\label{Z}\end{equation}
where $y=-a-2-a^{-1}$. It turns out that this is the Yamada polynomial for virtual graphs introduced in \cite{flemmell01}. For the convienience of the reader 
we proof this fact in this context.
\begin{proposition}
Let $E$ be a virtual graph diagram possibly with circle components. Then $H_E(a,1)=R(E)(a,1,1)$.
\end{proposition}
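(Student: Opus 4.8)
The plan is to strip the statement down to a single combinatorial identity and then invoke bookkeeping that has already been carried out for the Yamada polynomial. Put $y:=-a-2-a^{-1}$. First I would note that both sides are sums over the \emph{same} index set $\s E$ with the \emph{same} coefficients: by definition $R(E)(a,1,1)=\sum_{S\in\s E}\left\{E\mid S\right\}Q(S)(y,1,1)$, while $H_E(a,1)=\sum_{S\in\s E}\left\{E\mid S\right\}Z_S(-y)$. Hence it suffices to prove $Q(S)(y,1,1)=Z_S(-y)$ for an arbitrary state $S$, which, as in the surrounding discussion, I regard simultaneously as a pure {\tgd} and as its underlying abstract graph, every circle component being treated as a loop on one vertex of degree $2$.

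Next I would recall the evaluation of $Q(S)(y,1,1)$ obtained inside the proof of Proposition \ref{RundY}, namely $Q(S)(y,1,1)=(-1-y)^{u(S)}\sum_{F\subset E(S)}(-1)^{k(\hat F)}y^{n(\hat F)}$, where the sum runs over the spanning subdiagrams $\hat F$ of $S$ with its circle components removed, their effect being exactly the prefactor $(-1-y)^{u(S)}$ (one factor $h(\Loop)(-1,y)=-1-y$ per circle component). It then remains to expand $Z_S(-y)$ from \kref{Z} and match the two expressions. For a fixed $F\subset E(S)$ the graph $\hat F$ is spanning, so $\#V(\hat F)=\#V(S)$ and therefore $n(\hat F)=\#E(F)-\#V(S)+k(\hat F)$; substituting this turns the generic summand $y^{-\#V(S)}(-y)^{k(\hat F)}y^{\#E(F)}$ of \kref{Z} into precisely $(-1)^{k(\hat F)}y^{n(\hat F)}$. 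Thus the two sums coincide term by term, giving $Z_S(-y)=Q(S)(y,1,1)$, and summing back over $S\in\s E$ completes the argument. Note that no Reidemeister-move invariance is needed here; the whole proposition is a bookkeeping identity, in the same spirit as Proposition \ref{RundY}.

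The only delicate point, and the step I would treat as the (minor) main obstacle, is keeping the circle-component conventions of the two polynomials aligned: one must check that in Miyazawa's formula \kref{Z} the data $V(S)$, $E(S)$ and the spanning subgraphs $\hat F$ are taken \emph{excluding} the circle components, so that their entire contribution is the separated factor $(-1-y)^{u(S)}$, and that this matches the ``ignoring the \cc'' convention built into the definition of $Q$. Once that is pinned down, the Euler-characteristic identity $n(\hat F)=\#E(F)-\#V(S)+k(\hat F)$ forces the equality termwise, with no further input.
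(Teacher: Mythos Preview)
Your proposal is correct and follows essentially the same route as the paper: reduce to a state-by-state identity and use the Euler relation $n(\hat F)=\#E(F)-\#V(\hat F)+k(\hat F)$ to match exponents. The only cosmetic difference is that the paper phrases the target as $Z_S(-y)=h(S)(-1,y)$ and then cites Proposition~\ref{RundY} for the link to $Q(S)(y,1,1)$, whereas you compare $Z_S(-y)$ and $Q(S)(y,1,1)$ directly; the underlying computation is identical.
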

\begin{proofX}
Set $y=-a-2-a^{-1}$.  It is sufficient to show $Z_S(-y)=h(S)(-1,y)$ for a state $S$ because of proposition \ref{RundY}. From \kref{Z} we get
\begin{eqnarray*}
Z_S(-y) &=& (-1-y)^{u(S)}\sum_{F\subset E(S)} (-1)^{k\left(\hat{F}\right)}y^{-\# V\left(\hat{F}\right) + k\left(\hat{F}\right)+ \# E\left(\hat{F}\right)} \\
&=& (-1-y)^{u(S)} \sum_{F\subset S} (-1)^{k(F)} y^{n(F)} \quad = \quad h(S)(-1,y).
\end{eqnarray*}
\end{proofX}\section{Applications}
By contrast with the Yamada polynomial our polynomial distinguishes certain diagrams. The reason is that the Yamada polynomial ignores the
virtual crossings of a virtual bouquet and the R-polynomial does not, see example \ref{beispielR}.\ref{virtbouqet}. The following two diagrams from 
\cite{flemmell01}, figure 20 have the same Yamada polynomial but different R-polynomials.
\begin{example}  Let $y=-a-2-a^{-1}$.
\newcommand{\rr}[1]{R\left(#1\right)}
\newcommand{\qq}[1]{Q\left(#1\right)}
\newcommand{\mm}[1]{M\left(#1\right)}
\begin{enumerate}
\item $\rr{\ThetaKlein}=\rr{\ThetaOhne}+\rr{\twoLoop}=\rr{\Circ}-\rr{\Circ}^2=-1-y-(-1-y)^2$
\item $\rr{\ThetaV}=\rr{\eight}+\rr{\doubleLoop}=\qq{\eight}+\qq{\doubleLoop}=\mm{\twovertex}+2\mm{\linie}+\mm{\eight}+\qq{\doubleLoop}=-2-3y-y^2z^2$.
\end{enumerate}
\end{example}
\begin{definition}\label{supgen}\cite{kamakama01}
Let $S$ be an orientable, connected  disk/band  surface. The minimum genus among all closed orientable surfaces in which $S$ is 
embeddable is called the {\sf supporting genus} of $S$.
\end{definition} 
\begin{remark}\label{glue}\cite{kamakama01}
Glueing 2-disks to the boundary components of $S$ in definition \ref{supgen} we obtain a closed orientable surface $\sigma(S)$ 
realizing the supporting genus of $S$.
\end{remark}
\begin{proposition}
Suppose  $S$ is  an orientable, connected disk/band surface. Then $n\left(\sigma(S)\right) = 2-\chi\left(\sigma(S)\right)=1-b(S)+n(S)$.
\end{proposition}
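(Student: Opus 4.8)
The plan is to compute $\chi\left(\sigma(S)\right)$ in two different ways and read off both equalities. First I would recall that, by Remark \ref{glue}, $\sigma(S)$ is a \emph{closed} connected orientable surface, hence homeomorphic to a sphere with $g$ handles for some $g\geq 0$. For such a surface one has $\chi\left(\sigma(S)\right)=2-2g$ and first Betti number $n\left(\sigma(S)\right)=2g$. Combining these two standard facts yields at once
$n\left(\sigma(S)\right)=2g=2-(2-2g)=2-\chi\left(\sigma(S)\right)$, which is the first of the two claimed equalities, and does not even use that $\sigma(S)$ came from a disk/band surface.

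Next I would relate $\chi\left(\sigma(S)\right)$ to the combinatorial data of $S$. By the construction in Remark \ref{glue}, $\sigma(S)$ is obtained from $S$ by attaching $b(S)$ copies of the $2$-disk, one along each of the $b(S)$ boundary circles of $S$. Attaching a $2$-disk amounts to glueing a space of Euler characteristic $1$ along a subspace (a circle) of Euler characteristic $0$, so each such attachment raises the Euler characteristic by $1$; performing all $b(S)$ attachments gives $\chi\left(\sigma(S)\right)=\chi(S)+b(S)$.

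Finally I would express $\chi(S)$ through $n(S)$. Since $S$ is a disk/band surface it deformation retracts onto its spine, a graph formed by the cores of the disks and bands, and since $S$ is connected this graph is connected; therefore $\chi(S)=1-n(S)$, where $n(S)$ is the first Betti number. Substituting into the previous identity gives $\chi\left(\sigma(S)\right)=1-n(S)+b(S)$, and hence $2-\chi\left(\sigma(S)\right)=1-b(S)+n(S)$, which together with the first equality completes the proof.

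I do not anticipate a genuine obstacle: the argument is a bookkeeping of Euler characteristics. The only points deserving a word of care are that $\sigma(S)$ is genuinely closed, connected and orientable (supplied exactly by Remark \ref{glue}), and that the disk/band surface $S$ is homotopy equivalent to a connected graph, so that the relation $\chi(S)=1-n(S)$ is legitimate.
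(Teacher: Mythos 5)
Your proof is correct, and it takes a slightly different route from the paper's. You compute everything through Euler characteristics: $\chi(\sigma(S))=\chi(S)+b(S)$ by attaching $b(S)$ disks (each raising $\chi$ by one), $\chi(S)=1-n(S)$ since $S$ retracts onto a connected graph, and $n(\sigma(S))=2-\chi(\sigma(S))$ from the classification of closed orientable surfaces. The paper instead works directly with homology generators: writing $S$ as a disk with bands attached, it splits the $n(S)$ generators of $H_1(S)$ into $n_1(S)$ ``handle'' generators and $n_2(S)=b(S)-1$ generators carried by boundary components, observes that only the handle generators survive the capping, so $n(\sigma(S))=n_1(S)=n(S)-b(S)+1$, and gets the first equality from $\chi=\operatorname{rk}H_0-\operatorname{rk}H_1+\operatorname{rk}H_2=2-n(\sigma(S))$. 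The two arguments are the same bookkeeping organized differently: yours avoids having to identify which $H_1$ classes die under capping (the additivity of $\chi$ does that silently), while the paper's makes explicit which cycles of $S$ become the genus of $\sigma(S)$, which is the geometric content behind the supporting-genus discussion that follows. Both are complete; the only points you rightly flag --- that $\sigma(S)$ is closed, connected and orientable, and that $S$ is homotopy equivalent to a connected graph --- are exactly the hypotheses the paper also invokes.
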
 
\begin{proof} The surface $S$ is homeomorphic to a disk with  bands attached, see \cite{lick} figure 6.1. 
We write $n_1(S)$ for the number of generators of $H_1S$ coming from the {'handles'} and $n_2(S)$ for the number of generators
belonging to boundary components. Then $n(S)=n_1(S)+n_2(S)$, $n_2(S)=b(S)-1$ and $n\left(\sigma(S)\right)=n_1(S)=n(S)-n_2(S)=n(S)-b(S)+1$. As
$\sigma(S)$ is connected and orientable, the rank of $H_0S$ and $H_2S$ is $1$. The first equation follows immediatly.
\newline
\end{proof}
Consider the maximal degree of $z$ in the polynomial $R$ resp.~$Q$. Because of propositions \ref{invariance RM} to \ref{invariance vV} we will call it the $z$-degree.  
\begin{proposition}
For a classical graph diagram $D$ the $z$-degree of $R(D)$ is zero.
\end{proposition}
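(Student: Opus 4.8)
The plan is to reduce the statement to a claim about the auxiliary polynomial $Q$ on states. First I would expand $R(D)$ by \kref{R}: since $\left\{D\mid S\right\}=a^{p-q}$ is a power of $a$ only and the substitution $y=-a-2-a^{-1}$ introduces no $z$, the whole $z$-dependence of $R(D)$ sits inside the terms $Q(S)(-a-2-a^{-1},z,w)$ with $S\in\s D$. So it suffices to prove that $Q(S)$ has $z$-degree zero for every state $S$ of $D$.

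Next I would analyse a single state. Since $D$ is a classical graph diagram it lies in $\R^2$, and a state is produced from $D$ by the purely local replacements of figure 18 (the two smoothings, or a $4$-valent vertex); hence $S$ is again a diagram in $\R^2$, it has no crossings at all, and — crucially — it has no bars, so $o(S)=0$. The definition of $Q$ therefore collapses to $Q(S)=(-1-y)^{e(S)}\sum_{E\subset S}M(E)$, and since the prefactor carries no $z$, the problem is further reduced to showing that $M(E)$ has $z$-degree zero for every spanning sub-diagram $E\subset S$.

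The heart of the argument is the observation that the disk/band surface attached to such an $E$ is planar. Each $E$ is a crossing-free, bar-free diagram lying in $\R^2$; reading off the construction $\phi$ from figure 3 (the vertex picture, and the bar picture without its bar), the surface $S_E$ in $\phi(E)=(S_E,D_E)$ is homeomorphic to a regular neighbourhood of $E$ in $\R^2$, hence a compact \emph{orientable} genus-zero surface. Thus $t(S_E)=0$ and each of its $k(S_E)$ components is a $2$-sphere with finitely many open disks removed, so $\chi(S_E)=2k(S_E)-b(S_E)$; comparing with $\chi(S_E)=k(S_E)-n(S_E)$ gives $k(S_E)-b(S_E)+n(S_E)=0$. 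Hence $M(E)=(-1)^{k(S_E)}y^{n(S_E)}$ contains no power of $z$, and assembling the three reductions finishes the proof.

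I expect the only delicate point to be the claim that $\phi(E)$ yields a planar surface. Intuitively this is clear — with neither crossings nor twists present, the disks (at vertices) and bands (at edges) can all be laid out in the plane following the given planar embedding — but to make it airtight one should either invoke the explicit local pictures of figure 3 together with the note there that the surface is independent of crossing signs and twists, or apply Proposition \ref{surface iso} with $g$ the inclusion $S\hookrightarrow\R^2\subset\R^3$ and $p$ the standard projection $\mathrm{pr}$, which identifies $S_E$ with a neighbourhood of $E$ in the plane. Everything else is routine bookkeeping with Euler characteristics.
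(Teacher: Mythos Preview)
Your proposal is correct and follows essentially the same route as the paper: reduce from $R$ to $Q$ on states, then to $M$ on spanning subdiagrams, and finally show that the $z$-exponent $k-b+n$ vanishes because the associated disk/band surface is planar. The only cosmetic difference is that the paper packages the last step via the preceding supporting-genus proposition (capping off each component to a sphere and reading off $\chi(\sigma(F_i))=2$), whereas you compute the Euler characteristic directly from $\chi=2k-b=k-n$; these are the same computation.
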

\begin{proof}
Each state $S$ of $D$ has neither virtual nor real crossings. Hence  $S$ is a planar embedding. Let $y=-a-2-a^{-1}$. As
$D$ is not twisted, we have $Q(S)(y,z,w)=(-1-y)^{u(S)}\sum_{E \subset S} (-1)^{k(F)}y^{n(F)}z^{k(F)+n(F)-b(F)}$ where $F$ denotes the surface-part of the 
{\agd} corresponding to the subdiagram $E$ of $S$. Let $F_i$ be the components of $F$. 
Then $k(F)-b(F)+n(F) = \sum k\left(F_i\right) -b\left(F_i\right) +n\left(F_i\right) = \sum 1-b\left(F_i\right) +n\left(F_i\right) = 
\sum 2- \chi\left(\sigma\left(F_i\right)\right)$. Each component of $S$ is a planar embedding,  hence $F_i$ is homeomorphic to a planar embedding 
of a disk/band surface. Thus $\sigma\left(F_i\right) \approx S^2$, i.e.~$\chi\left(\sigma\left(F_i\right)\right)=2$ finishing the proof.
\newline
\end{proof}
As an immediate corollary we have
\begin{proposition}
If the degree of $z$  in $R(D)$ is not zero, then $D$ is not a classical graph diagram.
\end{proposition}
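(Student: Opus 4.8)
The plan is to observe that this statement is simply the contrapositive of the proposition immediately preceding it, so no new work is required. That proposition asserts that for \emph{every} classical graph diagram $D$, the $z$-degree of $R(D)$ is zero; equivalently, having nonzero $z$-degree is incompatible with being a classical graph diagram.

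Concretely, I would argue as follows. Suppose $D$ is a diagram for which the degree of $z$ in $R(D)$ is not zero. If $D$ were a classical graph diagram, then by the preceding proposition the $z$-degree of $R(D)$ would be zero, contradicting our assumption. Hence $D$ is not a classical graph diagram. Since $R$ is a well-defined invariant (by Propositions~\ref{invariance RM}, \ref{RiVirt} and \ref{invariance vV}, which justify speaking of ``the $z$-degree'' at all), the quantity ``$z$-degree of $R(D)$'' depends only on the equivalence class, so the conclusion is meaningful and the contrapositive is legitimate.

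There is essentially no obstacle here: the only thing to be careful about is that ``$z$-degree'' is well-defined, which has already been established, and that we are applying the previous proposition to the correct class of diagrams (classical, i.e.\ with neither virtual nor real\,---\,wait, with no virtual crossings and no bars). Thus the entire proof is one line invoking the preceding result by contraposition, and I would present it exactly that way.
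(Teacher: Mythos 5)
Your proposal is correct and matches the paper exactly: the paper introduces this proposition with the phrase ``As an immediate corollary we have,'' i.e.\ it is precisely the contrapositive of the preceding proposition that the $z$-degree of $R(D)$ vanishes for every classical graph diagram. No further argument is given or needed.
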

Suppose $E$ is a virtual graph diagram. For the number of virtual crossings of $E$ we write $\#vcr(E)$.
\begin{definition}
The {\sf virtual crossing number} $vcr(E)$ of a virtual graph diagram $E\in\mathcal{VG}$ is defined to be 
$\min\left\{\#vcr\left(E'\right)\mid E'\sim E \;\mbox{in}\; \mathcal{VG}\right\}$.
\end{definition}  
\begin{proposition}
For a virtual graph diagram the $z$-degree of $R$ is bounded above by the virtual crossing number as follows: $z$-degree $R(E)\leq 2\, vcr(E)$.
\end{proposition}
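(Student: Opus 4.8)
The plan is to bound the $z$-degree of each term $\{E\mid S\}Q(S)(-a-2-a^{-1},z,w)$ in the defining sum \kref{R} by $2\,vcr(E)$, uniformly over all states $S\in\mathcal S(E)$, and then to invoke the invariance of the $z$-degree under those Reidemeister moves which preserve the virtual crossing count, so that we may compute on a diagram $E'\sim E$ realising $vcr(E')=vcr(E)$. Since $\{E\mid S\}$ is a power of $a$ and does not involve $z$, it suffices to bound the $z$-degree of $Q(S)$ for a state $S$ of $E'$. By the definition of $Q$ the factors $(-1-y)^{e(S)}$ and $(-1-yzw)^{o(S)}$ contribute $z$-degree at most $o(S)$, which is at most the number of circle components, a quantity unaffected by virtual crossings; the essential contribution is from $\sum_{E''\subset S} M(E'')$, and a term $M(E'')$ carries $z$-degree $k(F)-b(F)+n(F)$ where $F=F(E'')$ is the surface-part of the \agd\ belonging to the spanning subdiagram $E''$.

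The first substantive step is therefore to estimate $k(F)-b(F)+n(F)$ for the surface $F$ produced from $S$ by $\phi$. Here $S$ has only virtual crossings, and by the construction in figure 3, item \ref{virtual}, each virtual crossing of $S$ contributes a bounded amount to the first Betti number of $F$ and to its number of boundary components; a classical graph diagram alone (no virtual crossings) yields a planar surface, for which $k(F)-b(F)+n(F)=0$ componentwise exactly as in the proposition on classical diagrams above. So the whole of $k(F)-b(F)+n(F)$ is ``paid for'' by the virtual crossings of $S$, and since every state of $E'$ has at most $\#vcr(E')=vcr(E)$ virtual crossings, the next step is a bookkeeping lemma: each virtual crossing raises $k(F)-b(F)+n(F)$ by at most $2$. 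This is the heart of the argument. One obtains it by building $F$ incrementally, adding the two crossing bands of one virtual crossing at a time to the already-constructed surface and checking how $k$, $b$ and $n$ change; attaching a band to a surface changes $n$ by at most $1$ and $b$ by at most $1$ in the directions that help, so $k-b+n$ goes up by at most $2$ per virtual crossing (and by $0$ for the vertex- and edge-neighbourhoods, which are disks). Hence $z\text{-degree }M(E'')\le 2\,vcr(E)$ for every $E''\subset S$.

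Combining, the $z$-degree of $Q(S)(-a-2-a^{-1},z,w)$ is at most $o(S)+2\,vcr(E)$; but $o(S)$ counts circle components with an odd number of bars, and circle components persist through the moves that fix $vcr$, so for a fixed representative $E'$ this is a constant independent of the state and — one should check, or simply absorb it — in fact circle components contribute to $n(F)$ as well, so the clean statement $z\text{-degree }R(E)\le 2\,vcr(E)$ follows once we note that a circle component already appears as a band (a loop) and its bar only toggles the $w$-exponent, not the $z$-exponent beyond what the handle it forms already accounts for. Passing from $E'$ back to $E$ uses Proposition \ref{invariance RM}, Proposition \ref{RiVirt} and Proposition \ref{invariance vV}: none of moves II, III, IV, I*--IV*, V* changes the $z$-degree, and move V* together with the twisted moves do not change $vcr$ either, so $z\text{-degree }R(E)=z\text{-degree }R(E')\le 2\,\#vcr(E')=2\,vcr(E)$. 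The main obstacle is the per-virtual-crossing estimate $k(F)-b(F)+n(F)\le 2\cdot(\#\text{virtual crossings})$; everything else is assembling previously proved facts.
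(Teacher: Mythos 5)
Your overall architecture agrees with the paper's: since $\{E\mid S\}$ is a power of $a$ and a virtual diagram has no bars (so $o(S)=0$ and the prefactor of $Q$ is just $(-1-y)^{e(S)}$, which carries no $z$ --- a one-line remark that would have replaced your inconclusive discussion of the $w$-exponent), everything reduces to bounding $k(F)-b(F)+n(F)$ for the surface-parts $F$ of subdiagrams of states, followed by passing to a minimizing representative via the invariance propositions. The genuine gap is in your central lemma, the claim that each virtual crossing raises $k(F)-b(F)+n(F)$ by at most $2$. First, the proposed decomposition ``add the two crossing bands of one virtual crossing at a time'' is not available: in the disk/band surface the bands correspond to edges of the underlying graph, not to virtual crossings, and a single band may take part in arbitrarily many virtual crossings (two bands may also cross each other virtually several times), so band attachments cannot be charged to crossings one pair at a time. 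Second, even granting such a decomposition, your accounting does not close: attaching one band can simultaneously raise $n$ by $1$ and lower $b$ by $1$ (join two distinct boundary circles of the same component), so a single band can raise $k-b+n$ by $2$ --- this actually happens for the virtual bouquet of example \ref{beispielR}.\ref{virtbouqet} --- and two such bands per crossing would give $4$ per crossing, i.e.\ only $z$-degree $R(E)\leq 4\,vcr(E)$. Nothing in your argument rules out both bands of one crossing being ``maximally bad''.

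The paper gets the constant $2$ by a global genus argument rather than local bookkeeping: for each component $E_i$, replace the ball at each virtual crossing by a handle and route one band over it, so the surface $S_i$ embeds in a closed orientable surface of genus $g_i=\#vcr(E_i)$; hence its supporting genus satisfies $\tilde g_i\leq g_i$, and by the proposition on supporting genus $1-b(S_i)+n(S_i)=n(\sigma(S_i))=2\tilde g_i\leq 2g_i$. Summing over components gives $k-b+n\leq 2\,\#vcr$, the factor $2$ being the two homology generators per handle. To repair your version you would need to show that the total excess of $k-b+n$ over the planar value is at most twice the genus of a closed surface accommodating all the crossings --- which is the paper's argument in disguise. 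I recommend replacing the incremental lemma by the embedding-into-$F_g$ step.
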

\begin{proofX}
Let $E_1,\ldots,E_n$ be the components of $E$. Firstly suppose $E$ is pure. The surface $S$ of the {\agd} $\phi(E)$ consists of components $S_i$ 
coming from $\phi\left(E_i\right)$. Consider each $E_i$ as a diagram in $S^2$. Instead of modifying $S_i$ as in 2.~of figure 3 we add a handle and let 
the surface  $S_i$ pass it.  Then $S_i$ is embedded in a closed orientable surface $F_{g_i}$ of genus $\#vcr\left(E_i\right)=g_i$. In an analogue 
manner $S$ is embedded in $F_g$ where $g=\#vcr(E)$. Thus $\sum g_i = \sum \#vcr\left(E_i\right)\leq \#vcr(E)= g$. Now attach disks to the boundary 
componnets of $S_i$ to obtain $\sigma\left(S_i\right)$ having genus $\tilde{g_i}$. From remark \ref{glue} we know $\tilde{g_i}\leq g_i$. Then 
$z$-degree$M(E) = k(S)-b(S)+n(S)= \sum 1- b\left(S_i\right)+n\left(S_i\right)=\sum n\left(\sigma\left(S_i\right)\right)=\sum 2 \tilde{g_i}\leq 
\sum 2g_i \leq 2g$. As $E$ has no twists, $Q$ has the form $Q(E)(y,z,w)=(-1-y)^{u(S)}\sum_{D \subset E} M(D)(y,z,w)$. 
Therefore we calculate $z$-degree $Q(E) = \max\left\{z\mbox{-degree }M(D)\mid D\subset E\right\} \leq \max\left\{2\#vcr(D)\mid D\subset E\right\}\leq 
2\#vcr(E)$. 

Now suppose $E$ is a virtual graph diagram not necessarily pure. Abbreviating $y=-a-2-a^{-1}$ we have 
\begin{eqnarray*}
z\mbox{-degree } R(E)(a,z,w)&=& \max\left\{z\mbox{-degree } Q(S)(y,z,w)\mid S\in\mathcal{S}(E)\right\}\\ 
&\leq& \max\left\{2\#vcr(S)\mid S\in\mathcal{S}(E)\right\}\\ &\leq &2\#vcr(E).
\end{eqnarray*}
Taking the minimum over all diagrams equivalent to $E$ in $\mathcal{VG}$ finishes the proof.
\newline
\end{proofX}
\begin{example}\label{hc2}
Consider a virtual diagram $E$ of the handcuff graph shown in figure 19. \\[1ex]
$$
\begin{minipage}{160pt}
\begin{picture}(160,70)
{\linethickness{0.7pt}
\qbezier(0,35)(30,55)(50,35)
\qbezier(50,35)(65,15)(80,35)
\qbezier(80,35)(95,50)(110,35)
\qbezier(110,35)(120,25)(80,0)
\qbezier(80,0)(60,-15)(0,35)

\qbezier(160,35)(130,55)(113,38)
\qbezier(107,32)(95,15)(80,35)
\qbezier(80,35)(65,50)(53,38)
\qbezier(47,32)(40,25)(80,0)
\qbezier(80,0)(100,-15)(160,35)

\qbezier(0,35)(80,110)(160,35)

\put(0,35){\circle*{6}}
\put(160,35){\circle*{6}}}
\end{picture}
\end{minipage}
$$\\
\begin{center}Figure 19\end{center}
We use the algorithm \cite{uhingALGO:07} to determine the polynomial of $E$. The result is
$$R(E)=-a^{-5}\left(r_4(a)z^4+r_2(a)z^2+r_0(a)\right)$$ with
\begin{eqnarray*}
r_4(a)&=& a^9 + 8a^8 + 28a^7 + 56a^6 + 70a^5 + 56a^4 + 28a^3 + 8a^2 + a,\\
 r_2(a)&=& - 15a^8 - 43a^7 - 70a^6 - 81a^5 - 70a^4 - 37a^3 - 6a^2 - 2a^9 + 3a+1,\\
r_0(a)&=& 6a^8 + 14a^7 + 13a^6 + 11a^5 + 14a^4 + 10a^3 - a^2 - 3a.
\end{eqnarray*}
We conclude $4=z\mbox{-degree}\, R(E)\leq 2vcr(E)$. Thus $vcr(E)=2$.
\end{example}
\begin{example}\label{hc4}
Consider a diagram of the {\sf handcuff graph of order} 4 depicted below. Let $S$ be the state  
with zero spin at each crossing.  
$$
\begin{minipage}{200pt}
\begin{picture}(200,200)
{\linethickness{0,7pt}
\put(100,100){\circle*{6}}
\put(180,120){\circle*{6}}
\put(120,20){\circle*{6}}
\put(20,80){\circle*{6}}
\put(80,180){\circle*{6}}
\put(180,100){\circle{40}}
\put(100,20){\circle{40}}
\put(20,100){\circle{40}}
\put(100,180){\circle{40}}
\qbezier(100,100)(100,100)(155,100)
\qbezier(165,100)(220,100)(120,20)
\qbezier(100,100)(120,20)(85,20)
\qbezier(75,20)(20,20)(20,80)
\qbezier(100,100)(20,85)(20,115)
\qbezier(20,125)(20,180)(80,180)
\qbezier(100,100)(80,180)(115,180)
\qbezier(125,180)(180,180)(180,120)
}
\end{picture}
\end{minipage}
$$
\begin{center}Figure 20\end{center}
Then $k(S)=1, b(S)=1$ and $n(S)=8$.
We conclude 
$8=k(S)+n(S)-b(S)\leq z-\mbox{degree } R(E) \leq 2vcr(E)$. Hence $vcr(E)=4$.
\end{example}
\begin{proposition}
For every $n\in \mathbb{N}$ there is a virtual graph diagram with virtual crossing number $n$.
\end{proposition}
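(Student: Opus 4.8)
The plan is to exhibit, for every $n$, a virtual graph diagram $E_n$ with exactly $n$ virtual crossings, so that $vcr(E_n)\le n$ holds trivially, and then to force $vcr(E_n)\ge n$ by means of the bound $z$-degree $R(E)\le 2\,vcr(E)$ established above.

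First I would fix the family. For $n=0$ take any classical graph diagram; for $n=1$ take the virtual bouquet $\doubleLoop$ of example \ref{beispielR}.\ref{virtbouqet}, for which $R=-1-2y-y^2z^2$ has $z$-degree $2$ while $\#vcr=1$. For $n\ge 2$ take the \emph{handcuff graph of order $n$}, generalizing Examples \ref{hc2} and \ref{hc4} (the cases $n=2$ resp.\ $n=4$): a hub vertex to which $n$ loops are attached by edges, the edges being routed so that they wind around one another and produce exactly $n$ virtual crossings, the routing being the evident extension of the one drawn in Figures 19 and 20. By inspection $\#vcr(E_n)=n$, hence $vcr(E_n)\le n$.

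For the reverse inequality, let $S$ be the state of $E_n$ obtained by assigning spin $0$ (a $4$-valent vertex) to every classical crossing; then $S$ is a pure virtual graph diagram with abstract graph diagram $\phi(S)=(\Sigma,D)$. As in the computation carried out for $n=4$ in Example \ref{hc4}, the diagram is arranged so that $\Sigma$ is connected with a single boundary component, i.e.\ $k(\Sigma)=b(\Sigma)=1$, while each of the $n$ winding crossings raises the supporting genus of $\Sigma$ by one, so that $n(\Sigma)=2n$. Hence $k(\Sigma)-b(\Sigma)+n(\Sigma)=2n$, the monomial $-y^{2n}z^{2n}w^{0}$ occurs in $M(S)$, and — being simultaneously the $z$-top and $y$-top monomial of $M(S)$ — it is not cancelled, either inside $Q(S)\left(-a-2-a^{-1},z,w\right)$ or in $R(E_n)=\sum_{T\in\s{E_n}}\left\{E_n\mid T\right\}Q(T)\left(-a-2-a^{-1},z,w\right)$. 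Therefore $z$-degree $R(E_n)\ge 2n$. Combining this with the bound $z$-degree $R(E_n)\le 2\,vcr(E_n)$ and with $vcr(E_n)\le\#vcr(E_n)=n$ gives $2n\le 2\,vcr(E_n)\le 2n$, so $vcr(E_n)=n$, proving the proposition.

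The step I expect to be the main obstacle is the surface bookkeeping: verifying that the $n$ winding virtual crossings of $E_n$ each contribute exactly one unit to the supporting genus of the zero-spin state surface (so that $n(\Sigma)=2n$, $k(\Sigma)=b(\Sigma)=1$), and that the resulting $z^{2n}$-term survives the passage from $M(S)$ through $Q(S)$ to $R(E_n)$. Both are confirmed by the explicit calculations in the cases $n=2$ and $n=4$ (Examples \ref{hc2} and \ref{hc4}, the latter via the algorithm of \cite{uhingALGO:07}), and the general case follows by the evident induction on the number of loops, so no genuinely new computation is required.
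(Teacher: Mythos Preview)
Your proposal follows essentially the same route as the paper's own proof: the virtual bouquet for $n=1$, and the handcuff graph of order $n$ for larger $n$, with the lower bound on $vcr$ coming from the zero-spin state via $k(S)+n(S)-b(S)$ and the inequality $z$-degree $R\le 2\,vcr$. The paper's argument is in fact even terser than yours --- it simply points to Example~\ref{beispielR}.4 for $n=1$, Example~\ref{hc2} for $n=2$, and then says ``for $n\ge 3$ consider the handcuff graph of order $n$ as in Example~\ref{hc4}'' without spelling out the non-cancellation of the top $z$-term.

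Two small discrepancies are worth noting. First, Example~\ref{hc2} (Figure~19) is \emph{not} the order-$2$ member of the family in Figure~20; it is a genuinely different two-vertex diagram whose $z$-degree is obtained there by explicit computation via \cite{uhingALGO:07}, not by the zero-spin state trick. So your sentence ``generalizing Examples~\ref{hc2} and~\ref{hc4} (the cases $n=2$ resp.\ $n=4$)'' mislabels the $n=2$ case; the paper handles $n=2$ separately and uses the order-$n$ family only for $n\ge 3$. Second, your verbal description (``$n$ loops attached to a hub by edges that wind around one another'') does not quite match Figure~20, where each spoke crosses an adjacent \emph{loop} once classically and once virtually; it would be safer just to say you take the evident order-$n$ analogue of Figure~20.

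On the non-cancellation point you correctly identify as the crux: your ``$y$-top and $z$-top'' heuristic is on the right track and can be made rigorous here because the zero-spin state graph for the order-$n$ handcuff is bridgeless, so every proper spanning subgraph has strictly smaller first Betti number; this makes $M(S)$ the unique contributor to the $y^{n(S)}z^{2n}$-monomial in $Q(S)$. The paper does not spell this out either, so your level of rigor matches (and slightly exceeds) the original.
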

\begin{proofX}
For $n=1$ see example \ref{beispielR}.4, for $n=2$  
example \ref{hc2}. For $n\geq 3$ consider the handcuff graph of order $n$ as in example \ref{hc4}. 
\end{proofX}

\bibliography{Artikel}
\bibliographystyle{abbrv}
\end{document}